\documentclass[aop]{imsart}

\RequirePackage{amsthm,amsmath,amsfonts,amssymb}
\RequirePackage[numbers]{natbib}
\RequirePackage[colorlinks,citecolor=blue,urlcolor=blue]{hyperref}
\RequirePackage{graphicx}

\startlocaldefs
\usepackage[shortlabels]{enumitem}
\usepackage{tikz}
\usetikzlibrary{arrows.meta}
\usepackage{subfigure, float}
\usepackage{comment}
\usepackage{mathrsfs}
\usepackage{mathtools}
\mathtoolsset{showonlyrefs}

\newcommand{\commHL}[1]{#1} 
\newcommand{\commTJ}[1]{#1} 

\newcommand{\E}{\mathbb{E}}
\newcommand{\Var}{\mathop{\mathrm{Var}}}
\newcommand{\eps}{\epsilon}
\renewcommand{\P}{\mathbb{P}}
\newcommand{\TT}{\mathbb{T}}
\newcommand{\ZZ}{\mathbb{Z}}
\newcommand{\1}{\mathbf{1}}
\newcommand{\f}{\frac}

\newcommand{\ind}[1]{\mathbf{1}{\{ #1 \}}}

\newcommand{\norm}[1]{\lVert #1 \rVert}

\renewcommand{\root}{\mathbf{0}}

\newcommand{\BT}{\vec{\mathcal{T}}_{2d}}

\newcommand{\eqd}{\overset{d}=}
\newcommand{\Aa}{\mathcal{A}}
\newcommand{\Ss}{\mathscr{S}}

\newcommand{\Ff}{\mathscr{F}}

\newcommand{\aaa}{\mathfrak{a}}

\newcommand{\bigmid}{\;\big\vert\;}
\newcommand{\biggmid}{\ \bigg\vert\ }
\newcommand{\Biggmid}{\ \Bigg\vert\ }
\DeclarePairedDelimiter\abs{\lvert}{\rvert}
\DeclarePairedDelimiter\floor{\lfloor}{\rfloor}
\DeclarePairedDelimiter\ceil{\lceil}{\rceil}

\DeclareMathOperator{\Poi}{Poi}
\DeclareMathOperator{\Ber}{Ber}
\DeclareMathOperator{\Bin}{Bin}

\usepackage{theoremref}

\newcommand{\stprec}{\preceq}
\newcommand{\stsucc}{\succeq}

\newcommand{\lrprec}{\preceq_{\text{lr}}}

\theoremstyle{plain}

\newtheorem{theorem}{Theorem}[section]
\newtheorem{lemma}[theorem]{Lemma}
\newtheorem{proposition}[theorem]{Proposition}
\newtheorem{remark}[theorem]{Remark}
\theoremstyle{remark}


\endlocaldefs

\begin{document}

\begin{frontmatter}
\title{Particle density in diffusion-limited annihilating systems}
\runtitle{Particle density in DLAS}

\begin{aug}
\author[A]{\fnms{Tobias}~\snm{Johnson}\ead[label=e1]{Tobias.Johnson@csi.cuny.edu}},
\author[B]{\fnms{Matthew}~\snm{Junge}\ead[label=e2]{Matthew.Junge@baruch.cuny.edu}  },
\author[C]{\fnms{Hanbaek}~\snm{Lyu}\ead[label=e3]{hlyu@math.wisc.edu}},
\and 
\author[D]{\fnms{David}~\snm{Sivakoff}\ead[label=e4]{dsivakoff@stat.osu.edu}
}
\address[A]{Mathematics,
CUNY College of Staten Island\printead[presep={,\ }]{e1}}

\address[B]{Mathematics,
CUNY Baruch College\printead[presep={,\ }]{e2}}

\address[C]{Mathematics,
University of Wisconsin - Madison\printead[presep={,\ }]{e3}}

\address[D]{Mathematics and Statistics,
Ohio State University \printead[presep={,\ }]{e4}}

\end{aug}

\begin{abstract}
Place an $A$-particle at each site of a graph independently with probability $p$ and otherwise place a $B$-particle. $A$- and $B$-particles perform independent continuous time random walks at rates $\lambda_A$ and $\lambda_B$, respectively, and annihilate upon colliding with a particle of opposite type. 
	Bramson and Lebowitz studied the setting $\lambda_A = \lambda_B$ in the early 1990s. Despite recent progress, many basic questions remain unanswered when $\lambda_A \neq \lambda_B$. For the critical case $p=1/2$ on low-dimensional integer lattices, we give a lower bound on the expected number of particles at the origin that matches physicists' predictions. For the process with $\lambda_B=0$ on the integers and on the bidirected regular tree, we give sharp upper and lower bounds for the expected total occupation time of the root at and approaching criticality. 
	
\end{abstract}

\begin{keyword}[class=MSC]
\kwd[Primary ]{60K35}
\end{keyword}

\begin{keyword}
\kwd{Interacting Particle System}
\kwd{Critical Behavior}
\end{keyword}

\end{frontmatter}

	\section{Introduction}
	\label{sec:intro}

We consider two-type diffusion limited annihilating systems (DLAS) on integer lattices and directed regular trees. Initially every vertex has a particle that is independently of type~$A$ with probability $p$ and otherwise is of type~$B$. 
In continuous time, $A$-particles perform simple random walk at rate $\lambda_A$ and $B$-particles at rate $\lambda_B$. 
If two particles of opposite types collide, both are removed from the system. Particles annihilate pairwise and there is no limit to the number of like particles that can occupy a single site. See Figure \ref{fig:sim_Z} for a depiction of a simulation.

\begin{figure}[h]
	\begin{center}
		\includegraphics[width = .75 \textwidth]{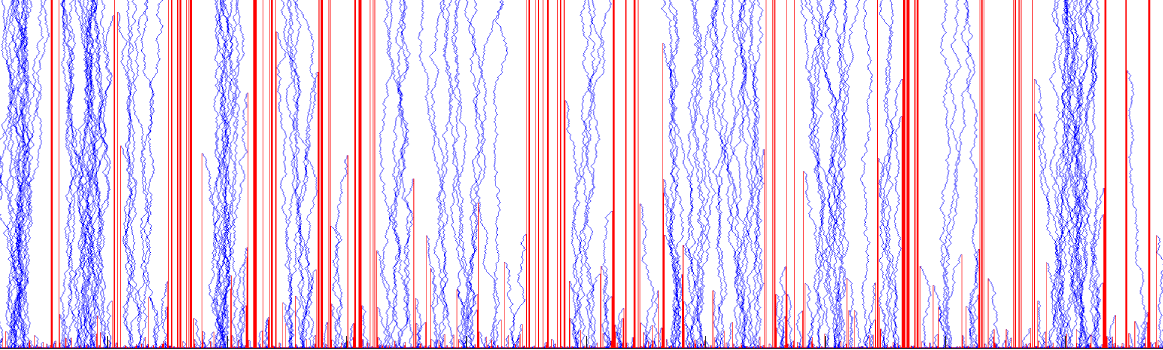}
	\end{center}
	\caption{Space-time representation of a DLAS with $p=1/2$ and $\lambda_B=0$ on the path with $2000$ vertices. Vertical lines are $B$-particles. Time runs from bottom to top.  
	}
	\label{fig:sim_Z}
\end{figure}

Physicists have been interested in this process as a model for irreversible reactions with mobile particles since the papers \cite{chem1} and \cite{chem2}.
The mean-field prediction for the model is that the density of both particle types should decay
at rate $t^{-1}$ if $p=1/2$, while the density of the less common particle type should decay
exponentially if $p\neq 1/2$. It was widely observed in the physics literature that while
these predictions were correct in high dimension, the model followed different asymptotics in low dimension.
But as Bramson and Lebowitz noted in \cite{BL2}, ``the answers given in that literature do not
always agree.'' 
They then rigorously determined the asymptotics of the model on $\ZZ^d$
for all values of $d$ when $\lambda_A=\lambda_B$ in \cite{BL4}. \commHL{Note that they consider slightly different initial conditions than our independent Bernoulli field. In their model, the number of $A$- and $B$-particles at each site are given by independent Poisson fields with intensities $r_A$ and $r_B$ for $A$- and $B$-particles, respectively.} Let
$\rho_t$ denote the expected number of $A$-particles at the origin at time~$t$.
Bramson and Lebowitz showed that when $r_A = r_B$ (analogous to $p=1/2$ in our model),
\begin{align}
\rho_t \asymp  \begin{cases}
 	t^{-d/4}, &d \leq 3 \\ 
 	t^{-1}, & d \geq 4
 \end{cases}\label{eq:rho}
\end{align}
using the notation $f(t)\asymp g(t)$ to denote that $f(t)/g(t)$ is bounded from above
and below by positive constants (which may depend on $d$ and $p$) for all $t$. 
On the other hand, when $p<1/2$,
\begin{align}\label{eq:rho2}
  e^{-c_d\psi_d(p,t)}\leq \rho_t \leq e^{-C_d\psi_d(p,t)},
\end{align}
\commHL{for all large $t$} 
where $c_d$ and $C_d$ are positive constants depending only on $d$ and
$$\psi_d(p,t) = \begin{cases}
 	\frac{(1-2p)^2}{1-p}\sqrt{t}, & d=1 \\ (1-2p)\frac{t}{\log t}, & d=2 \\ (1-2p)t, & d \geq 3.
 \end{cases}$$
This confirms that the model deviates from the mean-field behavior in dimensions $d\leq 3$
when the initial particle densities are equal and in dimensions $d\leq 2$ when the densities are unequal.

The closely related system \emph{annihilating random walk}  was already known not to exhibit mean-field behavior in low dimension. 
This process typically starts with one particle per site with particles performing independent random walks at rate $1$. Any collision results in mutual annihilation. 
Arratia proved in \cite{arratia1981limiting}
that $\beta_t$, the density of particles at the origin at time $t$, satisfies
\begin{align}\beta_t \asymp
 \begin{cases} t^{-1/2}, & d= 1 \\ 
t^{-1} \log t, & d =2 \\ t^{-1}, & d \geq 3\end{cases}.  \label{eq:beta} 
\end{align}
Arratia's work built on similar results from Bramson and Griffeath \cite{first} for \emph{coalescing random walk}, in which particles coalesce upon colliding (equivalently one particle is annihilated in each collision). 
\commHL{Coalescing random walk} has nice monotonicity properties and also enjoys a dual process known as the \emph{voter model}. \commHL{ Bramson and Griffeath \cite{first} used this dual process to obtain sharp asymptotic for particle density in coalescing random walk, and Arratia \cite{arratia1981limiting} used a coupling between coaleasing and annihilating random walk to show that the particle density in the annihilating system is asymptotically the half of the coalescing system. This leads to the exponents in \eqref{eq:beta}. There is no known tractable dual process for DLAS or coupling to well-known processes. }

The asymptotics given in \eqref{eq:rho} and \eqref{eq:rho2} were also conjectured to hold for \commHL{different} jump rates \cite{AB1,AB2,AB3}. 
However, a lack of symmetry makes these dynamics more difficult to analyze. 
For example, Bramson and Lebowitz analyzed DLAS with a coupling that first ignores the type of a particle and later reveals it. Such a coupling is valid only when $\lambda_A = \lambda_B$. {Another key result, which appears to hold only in the same-speed setting, is \cite[Lemma 2.1]{BL4}. This states that the average value of any convex function of the difference of the numbers of $A$- and $B$-particles in a region at time $t$ will be at least as large for DLAS compared to the process with no annihilation.}
A bound on $\rho_t$ when $\lambda_A\neq\lambda_B$
was proven in \cite{vicius_DLA} by Cabezas, Rolla, and Sidoravicius: 
for any choice of jump rates $\lambda_B\geq 0$ and $\lambda_A >0$, it holds that $\rho_t\geq C/t$ on a large class of transitive, unimodular graphs \commHL{(including $\mathbb{Z}^{d}$)} when the particle densities are in balance. \commHL{In particular, this is in line with \eqref{eq:rho} for $d\ge 4$.} The same authors in \cite{Cabezas2014} proved that when $\lambda_B=0$ and $A$-particles
move with drift, the number of visits to the origin in time~$t$ is of order $\sqrt{t}$. \commHL{It turns out, however, that the asymptotic behavior for $p<1/2$ is different when $\lambda_B=0$ from when $\lambda_A=\lambda_B>0$. Damron, Lyu and Sivakoff show that when $\lambda_B=0$ and $A$-particles move as discrete-time symmetric random walks, then $\rho_t  = \exp(-O(t^{d/(d+2)}))$, which is distinct from~\eqref{eq:rho2} for all dimensions $d\ge 1$ \cite{DLS}. This cast some doubt on whether the same asymptotics as in~\eqref{eq:rho} and~\eqref{eq:rho2} should always be  expected when $\lambda_B<\lambda_A$.}

Some other related results include: Cristali, Jiang, Junge, Kassem, Sivakoff, and York considered a discretized version of DLAS on finite graphs and studied the time to extinguish all particles \cite{dlas_star}. Ahlberg, Griffiths, and Janson studied the critical behavior of an two-type annihilating system
of branching random walks \cite{dlas_brw}. Dauvergne and Sly recently studied a variant of DLAS in which $A$- and $B$-particles move at different rates but, rather than collisions resulting in mutual annihilation, $B$-particles are converted to $A$-particles upon contact \cite{dauvergne2021spread}. 
Bahl, Barnet, Junge, and Johnson proved in \cite{bahl2021diffusion} that the occupation time of a subset of vertices by $A$-particles in DLAS is monotonic as the initial configuration is augmented in the increasing convex order, a nonstandard stochastic order that rewards volatility. 

Cabezas, Rolla, and Sidoravicius further proved that DLAS undergoes a phase transition from infinite visits by $A$-particles to the root (recurrence) when $p \geq 1/2$ to only finitely many visits (transience) for $p<1/2$ \cite{vicius_DLA}. 
This built on previous recurrence/transience results by the same authors \cite{Cabezas2014} for the special case $\lambda_B=0$, which they named the \emph{particle-hole model}. An \commHL{Abelian} property ensures their results also hold in discrete time. 
Damron, Gravner, Junge, Lyu, and Sivakoff produced similar results for the case $\lambda_B=0$ with $A$-particles performing discrete time random walk and also derived some quantitative estimates on the expected number of particles to visit the origin in \cite{parking}. 
Inspired by recent results in parking on random graphs \cite{uniform_parking, tree}, the authors named this setting the \emph{parking process}. 

As we were writing up our results, Przykucki, Roberts, and Scott released a paper concerning the parking process on the integers \cite{parking_on_integers}. For the case $p=1/2$, they proved lower and upper bounds on the expected occupation time of the root, $\E_p V_t = \int_0^t \rho_s ds,$ that matched the conjectured behavior up to a sublogarithmic factor:
\begin{align}t^{3/4}(\log t)^{-1/4} \lesssim \E_{1/2} V_t \lesssim t^{3/4}.\label{eq:integers1}	
\end{align}
\commHL{Here, the occupation time $V_{t}$ is the integral of the number of $A$-particles at the origin at time $s$ from $s=0$ to $t$, so it accounts for multiple visits of the same particle to the origin up to time $t$.} The notation $f(t)\lesssim g(t)$ means $f(t) = O(g(t))$.
Addressing a conjecture from \cite{parking} concerning the rate of growth of $\E_p V_\infty$ as $p$ approaches $1/2$, they further proved that as $p \uparrow 1/2$ 
\begin{align}
\E_p V_\infty \lesssim (1-2p)^{-6}.\label{eq:integers2}
\end{align}
Using different techniques, we prove a stronger version of this estimate.
We also provide several other estimates on $\rho_t$, which we summarize below and then state more precisely.

For $d\leq 3$, we give a lower bound of $\rho_t\geq (t\log t)^{-d/4}$ for the $p=1/2$ case
(see Theorem~\ref{thm:LB}).
This is consistent with the behavior with equal jump rates in \eqref{eq:rho} and 
with $\lambda_B=0$ in \eqref{eq:integers1} for the case $\lambda_B=0$ in discrete time on $\mathbb Z$. Our work and \cite{parking_on_integers} are the first confirmations of deviation from mean-field behavior with nonequal jump rates. 
For $d=1$ and $\lambda_B=0$, 
we also give an upper bound on the total occupation time for a site that essentially
confirms that the asymptotics of \eqref{eq:rho} hold in this case (Theorem~\ref{thm:UB}). This agrees with the upper bound in \cite{parking_on_integers} except ours is proven in continuous time. Addressing \eqref{eq:integers2}, we prove that, up to a logarithmic factor, $\E_p V_\infty$ grows like $(1-2p)^{-3}$ (Theorems~\ref{thm:EV_LB} and \ref{thm:EV_UB}). 

Our final results investigate the high-dimensional behavior. Bramson and Lebowitz showed that
DLAS has 
the mean-field density decay of $t^{-1}$ on $\ZZ^d$ for $d\geq 4$ in the case of equal jump rates.
We consider the model with $\lambda_B=0$ on a directed regular tree.
We give upper and lower bounds of order $\log t$ on the cumulative occupation time for the 
$p=1/2$ case, and we give upper and lower bounds as $p\uparrow 1/2$ that agree up to constants
(Theorems~\ref{thm:tree} and
\ref{thm:tree.exponent}). To the best of our knowledge, this is the only instance in which mean-field
behavior has been proven to occur with nonequal jump rates on an infinite graph.

\subsection{Statement of results}
\label{subsection:results}
We consider the two-type DLAS on a given rooted graph where each vertex initially
contains exactly one particle, which has type~$A$ with probability $p$,
with jump rates for the two particle types given by $\lambda_A$ and $\lambda_B$. 
Without loss of generality, we can take one of the jump rates to be $1$.
We take $\lambda_A=1$ in all results except for \thref{thm:EV_LB}, where doing so would result
in a needless loss of generality.

Let $N_t$ be the number of $A$-particles at the root at time~$t$.
Let $\rho_t=\E N_t$, which we refer to as the density of $A$-particles
at time~$t$. Finally, let $V_t=\int_0^t N_s\,ds$, the aggregate time spent by $A$-particles at the root
up to time~$t$. 
 All of our results concern the asymptotic behavior of $\rho_t$ and $\E V_t$ on
transitive graphs, rendering the choice of root irrelevant.

Our first result is a general lower bound on density with particle types in balance, confirming the lower bounds (up to a logarithmic factor) of the Bramson--Lebowitz asymptotics \eqref{eq:rho} in low dimension. 

\begin{theorem} \thlabel{thm:LB} 
 Let $\lambda_A = 1$ and $0\leq \lambda_B\leq 1$. On $\ZZ^d$ with $d\leq 3$ and $p=1/2$ there exists a constant $C>0$ that does not depend on $\lambda_B$ such that  $$\rho_t \geq C ( t \log t)^{-d /4}$$
 for all sufficiently large $t$
 \end{theorem}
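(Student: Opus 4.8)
The plan is a second-moment argument on a box of side just above the diffusive scale, using that the \emph{signed} particle count is conserved except near the boundary of the box. Fix $t$ large, let $K$ be a large constant to be chosen, set $L=\lceil K\sqrt{t\log t}\,\rceil$, and let $B_L$ be the box of radius $L$ about the origin. Write $\sigma_x=+1$ if site $x$ initially carries an $A$-particle and $\sigma_x=-1$ otherwise, so the $\sigma_x$ are i.i.d.\ symmetric signs; let $A_t,B_t$ be the numbers of $A$- and $B$-particles in $B_L$ at time $t$, and put $M_t=A_t-B_t$ and $M_0=\sum_{x\in B_L}\sigma_x$. By translation invariance $\rho_t=\E A_t/|B_L|$, and since $A_t\geq(A_t-B_t)^+$ it is enough to show $\E[(A_t-B_t)^+]\gtrsim L^{d/2}$ with an implied constant not depending on $\lambda_B$.

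The only way $M_t$ can change is by a particle crossing $\partial B_L$, so the quantity to control is the ``signed flux'' $\Phi_t:=M_t-M_0$. Berry--Esseen gives $\P(M_0\geq c_2L^{d/2})\geq c_1$ for absolute constants $c_1,c_2>0$, and on $\{M_0\geq c_2L^{d/2}\}\cap\{\Phi_t\geq-\tfrac12 c_2L^{d/2}\}$ one has $M_t\geq\tfrac12 c_2L^{d/2}$, so
\begin{equation}
\E\bigl[(A_t-B_t)^+\bigr]\ \geq\ \tfrac12 c_2L^{d/2}\Bigl(c_1-\P\bigl(\Phi_t<-\tfrac12 c_2L^{d/2}\bigr)\Bigr).
\end{equation}
Thus the whole problem reduces to showing $\P(|\Phi_t|\geq\tfrac12 c_2L^{d/2})\leq c_1/2$, for which, by Chebyshev, $\E[\Phi_t^2]\lesssim K^{-1}L^d$ suffices; in fact I would aim for the bound $\E[\Phi_t^2]\lesssim L^{d-1}\sqrt{t\log t}$.

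To estimate $\Phi_t$ I would couple the DLAS with independent walks $\{W^x\}$, one issued from each site, run at rate $\lambda_A=1$ or $\lambda_B$ according to type, each particle following its walk until annihilation, and extending each walk (fictitiously) past the annihilation time. With $\mathcal D$ the set of sites whose particle has died by time $t$, bookkeeping $A_t-B_t$ gives the exact identity
\begin{equation}
\Phi_t\ =\ \sum_{x}\sigma_x\bigl(\1[W^x_t\in B_L]-\1[x\in B_L]\bigr)\ -\ \sum_{x\in\mathcal D}\sigma_x\,\1[W^x_t\in B_L]\ =:\ \Phi_t^{\mathrm{free}}-R_t.
\end{equation}
Here $\Phi_t^{\mathrm{free}}$ is a sum of independent contributions, one per site, with $\E\Phi_t^{\mathrm{free}}=0$ (because $\sum_x p^{\lambda}_t(x,y)=1$ for every rate $\lambda$), so $\E[(\Phi_t^{\mathrm{free}})^2]\leq\sum_x\P\bigl(\1[W^x_t\in B_L]\neq\1[x\in B_L]\bigr)\lesssim L^{d-1}\sqrt t$ by the Gaussian bound on a walk's displacement (a boundary layer of width $\sqrt t$), uniformly in $\lambda_B\in[0,1]$ since a slower $B$-walk only shrinks each flip probability.

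The hard part will be the annihilation correction $R_t$, since $|R_t|$ can a priori be of order $L^d$ and cancellation must be extracted. The structural fact I would exploit is that $\mathcal D$ decomposes into matched pairs $\{x,y\}$ with $\sigma_x=-\sigma_y$ that meet at a common point $z$ at a common time $\tau$, and that conditionally on the DLAS run up to time $t$ the post-$\tau$ portions of $W^x$ and $W^y$ are independent walks from $z$; a pair contributes to $R_t$ only if these two walks lie on opposite sides of $\partial B_L$ at time $t$, which — since a walk moves at most $O(\sqrt{t\log t})$ by time $t$ off an event of probability $t^{-\omega(1)}$ — forces $z$, and hence both of $x$ and $y$, into an $O(\sqrt{t\log t})$-neighborhood of $\partial B_L$; there are only $O(L^{d-1}\sqrt{t\log t})$ such sites, and together with the conditional independence of the pair-terms this should give $\E[R_t^2]\lesssim L^{d-1}\sqrt{t\log t}$. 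The genuinely delicate point — where I expect essentially all the work and the logarithmic loss relative to \eqref{eq:rho} to reside — is that when $\lambda_A\neq\lambda_B$ the two continuations of a pair have different laws, so the pair-terms are not exactly centered; the resulting bias has a fixed sign governed by which side of $\partial B_L$ the meeting point sits on, and one must either symmetrize the continuations or show that these biases cancel across pairs, all uniformly in $\lambda_B$. Granting $\E[\Phi_t^2]\lesssim L^{d-1}\sqrt{t\log t}\lesssim K^{-1}L^d$ and taking $K$ large then yields $\P(|\Phi_t|\geq\tfrac12 c_2L^{d/2})\leq c_1/2$, hence $\rho_t=\E A_t/|B_L|\gtrsim L^{-d/2}\asymp(t\log t)^{-d/4}$ with a constant depending only on $d$ (through $K$) and not on $\lambda_B$, for all large $t$.
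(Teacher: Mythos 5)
Your starting point is the same heuristic the paper uses: on a box of radius $L\asymp\sqrt{t\log t}$ the initial discrepancy $M_0$ has a positive chance of exceeding $L^{d/2}$, and since annihilation removes an $A$ and a $B$ together, the discrepancy inside the box can change only through the boundary. But the execution diverges at exactly the point you flag as delicate, and that point is a genuine gap rather than a technicality. The correction term $R_t$ is not just "not exactly centered": a single annihilating pair meeting at $(z,\tau)$ contributes a term with conditional mean $b(z,t-\tau)=\P(z+W^{\lambda_A}_{t-\tau}\in B_L)-\P(z+W^{\lambda_B}_{t-\tau}\in B_L)$, which is of order $1$ whenever $z$ lies within $O(\sqrt{t})$ of $\partial B_L$, and the number of annihilations occurring in that layer can be as large as its volume, $\asymp L^{d-1}\sqrt{t}\asymp L^d/K$. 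So even granting $\E R_t=0$ (which does follow from translation invariance of the annihilation intensity together with $\sum_z b(z,s)=0$), the decomposition $\E[R_t^2]=\E[\mathrm{Var}(R_t\mid\mathcal G)]+\mathrm{Var}(\E[R_t\mid\mathcal G])$, with $\mathcal G$ the annihilation structure, leaves the second term bounded a priori only by $(L^d/K)^2\gg L^d$. Controlling it requires quantitative control of the spatial correlations of the annihilation point process near $\partial B_L$ — which is essentially the kind of information about DLAS with $\lambda_A\neq\lambda_B$ that the theorem is trying to establish in the first place. "Conditional independence of the pair-terms" only handles the first term.

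The paper circumvents the flux problem entirely by wrapping the box into the torus $\mathbb T^d_{2r}$ with $r=\lceil C\sqrt{t\log t}\rceil$: there the discrepancy is \emph{exactly} conserved (no boundary), so on the event $\{D>(t\log t)^{d/4}\}$ the surplus $A$-particles survive forever and translation invariance gives $\bar\rho_t\gtrsim(t\log t)^{-d/4}$ directly. The price is the comparison $|\rho_t-\bar\rho_t|\le Ct^de^{-cr^2/t}$ of Lemma~\ref{lem:torus}, which is proved by a soft coupling argument (no particle started within distance $r$ of the origin reaches the boundary identifications by time $t$ except with stretched-exponentially small probability, plus the truncation Lemma~\ref{lem:finite.range.truncation} and a Cauchy--Schwarz step using second moments of dominating all-$A$ systems). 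If you want to salvage your route, the cleanest fix is precisely this one: replace the direct estimate of the signed flux through $\partial B_L$ by the torus identification, so that the quantity you need to be conserved is conserved by fiat.
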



Next, we provide an upper bound on $\E V_t$ in dimension~$1$ with $\lambda_B=0$.
Combined with Theorem~\ref{thm:LB} and the fact that $\E V_t = \int_0^t \rho_s ds$, 
it strongly supports the conjecture that $\rho_t \asymp t^{-1/4}$.

\begin{theorem} \thlabel{thm:UB}
Let $\lambda_A =1$ and $\lambda_B=0$. On $\ZZ$ with $p=1/2$ there exists $C>0$ such that $$\E V_t \leq C t^{3/4}$$
for all sufficiently large $t$.
\end{theorem}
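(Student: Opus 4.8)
The plan is to prove the pointwise density bound $\rho_s=\E N_s\le Cs^{-1/4}$ for all large $s$; since $\E V_t=\int_0^t\rho_s\,ds$, this integrates to $\E V_t\le C't^{3/4}$ once $t$ is large (the contribution of bounded $s$ being a harmless additive constant). Everything therefore reduces to estimating $\rho_s$ at one fixed large time $s$.

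The first step is a path-swapping coupling. Because $A$-particles are indistinguishable, whenever two of them occupy a common site we may exchange their future trajectories without altering the law of the unlabelled configuration process; carried out systematically this is the Harris-type reflecting coupling, and it lets us relabel the $A$-particles so that they stay ordered, $\dots\le Y_{-1}(s)\le Y_0(s)\le Y_1(s)\le\dots$, with annihilation pairing surviving $A$-particles to the (static) $B$-sites in a non-crossing, order-preserving fashion. In this representation the locations of the surviving $A$-particles and the set of still-occupied $B$-sites are controlled by the partial sums $R_k=\sum_{x=1}^k\xi_x$ together with the analogous left-hand sums, where $\xi_x=+1$ if $x$ is initially an $A$-site and $\xi_x=-1$ otherwise. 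The qualitative conclusion to extract is that an $A$-particle can be alive at the origin at time $s$ only if $0$ lies in the ``unparked'' region, which forces a record-type constraint on $R$ within the diffusive window of length $\asymp\sqrt s$ around $0$, since an $A$-particle starting at distance $r$ typically needs time of order $r^2$ to reach $0$.

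The second step is a simple-random-walk estimate. Inside a window of length $\asymp\sqrt s$ the excess of $A$-sites over $B$-sites is of order $(\sqrt s)^{1/2}=s^{1/4}$ in expectation, spread over a region of length $\asymp\sqrt s$; equivalently, a $\pm1$ walk of length $\asymp\sqrt s$ lies within $O(1)$ of a running record at a prescribed location with probability $\asymp(\sqrt s)^{-1/2}=s^{-1/4}$ by the reflection principle and ballot-type bounds. Since, by the first step, the $A$-particles alive at the origin at time $s$ are --- up to lower-order corrections --- exactly the surplus particles satisfying such a constraint, and since conditionally on the relevant window configuration the expected number of $A$-particles simultaneously at $0$ is $O(1)$, one gets $\E N_s\le Cs^{-1/4}$; integrating over $s$ finishes the proof. (The bookkeeping can alternatively be organized through the potential function $M_s=\sum_a|Y_a(s)|-\sum_b|b|$ over surviving particles, which evolves only through $A$-particles at the origin and hence satisfies $\E V_t=\E M_t$ up to a boundary correction of order $\sqrt t$, reducing the theorem to a bound on the expected signed charge at each site.)

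The main obstacle is the deterministic part of the first step. Unlike coalescing or annihilating random walk, DLAS has no tractable dual, and a $B$-site near the origin may be consumed by an $A$-particle that originated far away, so one cannot read off the surviving $B$-particles near $0$ from the $\xi_x$ in a fixed window. Turning the reflecting-coupling picture into a rigorous implication ``$N_s\ge 1\Rightarrow$ record-type event for $R$ at scale $\sqrt s$'' --- and doing so with window length $\asymp\sqrt s$ rather than $\sqrt s\log s$, so that no spurious logarithm appears in the final bound (none is present in \thref{thm:UB}) --- is the technical heart. This forces one to \emph{absorb} rather than union-bound away the contribution of atypically far-travelling $A$-particles, for instance by stratifying according to how far the relevant particle has wandered and invoking Gaussian tail bounds on the increments of $R$ and on simple-random-walk displacement; controlling the (rare) pile-ups of many $A$-particles at $0$ is a secondary point, again handled within the ordered representation.
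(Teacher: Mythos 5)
Your proposal reduces the theorem to the pointwise density bound $\rho_s\le Cs^{-1/4}$, but that bound is strictly stronger than \thref{thm:UB} and is not established here or anywhere in the literature; the paper explicitly presents $\rho_t\asymp t^{-1/4}$ as a conjecture that the integrated bound merely ``supports.'' The step you flag as the ``technical heart'' --- turning the ordered-particle picture into the implication $N_s\ge 1\Rightarrow$ a record-type event for the partial sums $R_k$ in a window of width $\asymp\sqrt{s}$ --- is precisely the open problem, and your sketch does not close it. Two specific points fail. First, the claim that the surviving $A$-particles and the still-occupied $B$-sites at time $s$ are ``controlled by the partial sums $R_k$'' is false: which $B$-sites survive depends on the entire collection of realized random-walk trajectories (a $B$ near the origin may be consumed by an $A$ that started far away, or spared because a nearer $A$ wandered off), so no deterministic function of the initial $\pm1$ sequence in a fixed window determines the state near $0$. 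Second, even granting an ordered relabelling of the $A$-particles, the annihilation pairing in DLAS is not the non-crossing, order-preserving matching determined by $(\xi_x)$; it is path-dependent, which is exactly why there is no tractable dual for this model. So the proposal as written is a programme, not a proof.

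The paper avoids the fixed-time density question entirely. It bounds $\E V_t$ directly by coupling DLAS to a \emph{sequential} process (\thref{lem:sequential.process}): $A$-particles are released one at a time, each running until annihilation or time $t$, and the occupation time of the root only increases under this modification. Combined with the subadditivity of \thref{lem:polarized_DLAS}, this reduces matters to the half-line with particles released in order of distance from the origin. The decisive observation is then conditional rather than configurational: for the particle at $k$ to reach $\root$, the particles in $[1,k-1]$ must already have cleared all $B$'s there, so the surplus $D[1,k-1]=O(\sqrt k)$ controls how far to the right of $k$ the nearest surviving $B$ can lie, and a gambler's-ruin estimate (\thref{lem:gamblers.ruin}) gives $\P(G_k)=O(k^{-1/2}e^{-k^2/12t})$. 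Summing over $k\le\sqrt t$ yields $O(t^{1/4})$ distinct visitors, each contributing occupation time at most $t^{1/2}$ by \thref{lem:local_time}. If you want to salvage your approach, you would need a genuinely new idea for reading off the surviving $B$-configuration near the origin at a fixed time; the sequential ordering is what lets the paper sidestep that.
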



{Next, we consider the critical exponent of $\mathbb E_p V_{\infty}$ as $p \uparrow 1/2$.

\begin{theorem} \thlabel{thm:EV_LB}
 Let  $0 < \lambda_A \leq  1$ and $0 \leq \lambda_{B}\leq 1$. 	On $\ZZ^d$ for $d \leq 3$ there exists a constant $C>0$ such that 
	$$\E_p V_\infty \geq C \frac{\left( 1  - 2p\right )^{ -(4/d) + 1}}{-\log \left( {1-2p}\right)}$$
	for all $1/4< p < 1/2$.
\end{theorem}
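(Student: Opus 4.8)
The plan is to show that lowering $p$ slightly below $1/2$ leaves the density $\rho_s$ comparable to the critical-case bound of \thref{thm:LB} up to a definite time scale $T=T(p)$, and then to integrate that bound over $[0,T(p)]$. Write $\eps=1-2p$. There is no monotonicity to exploit: adding $B$-particles can only reduce the $A$-counts, so one cannot invoke \thref{thm:LB} as a black box, and the real content is a bias-corrected version of the box argument behind it. The mechanism is: at time $s$, consider the box $Q_\ell=[-\ell,\ell]^d$ with $\ell\asymp\sqrt{s\log s}$ (the extra $\log s$ leaves the particles room to diffuse only a sub-box distance over $[0,s]$); if $Q_\ell$ is $A$-rich by at least $c(s\log s)^{d/4}$ at time $0$, then at time $s$ a slightly enlarged box still contains $\gtrsim(s\log s)^{d/4}$ $A$-particles, whence, by transitivity, $\rho_s\ge C'(s\log s)^{-d/4}$. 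Now $\#A(Q_\ell)-\#B(Q_\ell)$ at time $0$ is a sum of $\asymp|Q_\ell|$ i.i.d.\ signs with mean $-\eps|Q_\ell|$ and standard deviation $\asymp(s\log s)^{d/4}$, so by the CLT it exceeds $c(s\log s)^{d/4}$ with probability bounded below exactly when $\eps(s\log s)^{d/4}\lesssim 1$, that is, for
\begin{equation*}
  s\ \le\ T(p)\ \asymp\ \frac{(1-2p)^{-4/d}}{\log\frac1{1-2p}}.
\end{equation*}

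First I would isolate the quantitative engine behind \thref{thm:LB} in a form uniform in $p$: if the box of side $\asymp\sqrt{s\log s}$ is $A$-rich by $\ge c(s\log s)^{d/4}$ with probability $\ge\delta$ at time $0$, then $\rho_s\ge C'(s\log s)^{-d/4}$, with $C'$ depending on $d,\delta,\lambda_A,\lambda_B$ but not on $p$. What makes this engine $p$-insensitive is that it follows the imbalance $\#A-\#B$ of an enlarged box rather than individual particles: annihilation leaves the imbalance unchanged, and the error from particles crossing $\partial Q_\ell$ over $[0,s]$ is governed by the number of particles within diffusion distance $O(\sqrt s)$ of the boundary, a shell that is a $(\log s)^{-1/2}$ fraction of $|Q_\ell|$ and whose population is independent of $p$ because the initial configuration always has one particle per site. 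Thus $p$ enters only through the law of the initial imbalance, which the previous paragraph has arranged to be favorable for all $s\le T(p)$; hence for $s_0\le s\le T(p)$, with $s_0$ the threshold in \thref{thm:LB}, we get $\rho_s\ge C'(s\log s)^{-d/4}$.

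It then remains to integrate. Using $d\le 3$, so that $s^{-d/4}$ is non-integrable at infinity,
\begin{equation*}
  \E_p V_\infty\ \ge\ \int_{s_0}^{T(p)}\rho_s\,ds\ \ge\ C'\int_{s_0}^{T(p)}(s\log s)^{-d/4}\,ds\ \asymp\ \frac{T(p)^{1-d/4}}{(\log T(p))^{d/4}},
\end{equation*}
and substituting $T(p)\asymp(1-2p)^{-4/d}/\log\frac1{1-2p}$ together with $\log T(p)\asymp\log\frac1{1-2p}$ collapses the right-hand side to $(1-2p)^{-(4/d)+1}/\log\frac1{1-2p}$, as claimed. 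The restriction $1/4<p<1/2$ serves only to keep $\log\frac1{1-2p}$ bounded below; for $p$ bounded away from $1/2$ (where $T(p)\le s_0$) the statement degenerates to the trivial $\E_p V_\infty>0$ after shrinking $C$.

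I expect the main obstacle to be the second paragraph: verifying rigorously that the lower-bound engine of \thref{thm:LB} really tolerates the $B$-excess. One must check that the larger population of $B$-particles outside $Q_\ell$, which can diffuse inward and consume the $A$-surplus, perturbs the ``imbalance is essentially conserved'' estimate by at most an absolute constant on the time scale $s\le T(p)$ — heuristically clear because at scales $\sqrt{s\log s}\ll\eps^{-2/d}$ the process has not yet reached local equilibrium and still resembles a fresh i.i.d.\ field (so both the mean and the fluctuation of the boundary flux are $o\big((s\log s)^{d/4}\big)$ there), but it should be obtained by a careful coupling. The cleanest route is to state the key lemma underlying \thref{thm:LB} for arbitrary $p\in(0,1/2]$ from the start and read off the bias-corrected constants.
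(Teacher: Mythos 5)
Your outline reaches the right answer and is built from the same two ingredients as the paper — a fluctuation lower bound for the initial discrepancy on a box whose size is tuned so the bias $\eps\abs{Q}$ does not swamp the fluctuation $\abs{Q}^{1/2}$, followed by integrating the resulting density bound up to a time scale $T(p)$ — but you organize them differently. You run the critical-case engine at every time $s$, with a box of diffusive scale $\sqrt{s\log s}$, to get $\rho_s\gtrsim(s\log s)^{-d/4}$ for all $s\le T(p)$. The paper instead fixes a single spatial scale $r\asymp(1-2p)^{-2/d}$ once and for all, passes to the torus $\TT_{2r}^d$ where the $A$-surplus survives \emph{forever} (annihilation is pairwise on a finite graph), deduces the constant bound $\bar\rho_s\ge c\eps$ for all $s$, transfers it to $\ZZ^d$ at the single time $t\asymp r^2/\log r$ via \thref{lem:torus}, and then uses monotonicity of $s\mapsto\rho_s$ to cover $[0,t]$. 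The two density bounds agree at $s=T(p)$, and since the integral is dominated by its upper endpoint both give the same final order; your pointwise bound is stronger for small $s$ but buys nothing here, while the paper's version needs only one application of the lattice--torus comparison rather than one per time scale.

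Two points need attention. First, the step you flag as the main obstacle — that the ``imbalance is essentially conserved'' argument tolerates the $B$-excess — should not be attacked via a boundary-flux estimate on a box in $\ZZ^d$; controlling the inward flux of the extra $B$-particles directly is exactly the murky part. The clean resolution is the one already in the paper: \thref{lem:torus} compares $\rho_s$ to the torus density with constants that do not depend on $p$ (its proof never uses the particle types' law, only the coupled random walk paths), and on the torus the surplus argument is $p$-insensitive for free because surplus $A$-particles can never be annihilated. So the ``bias-corrected engine'' you ask for is precisely \thref{lem:torus} plus the finite-graph conservation of the discrepancy; no new coupling is needed. Second, ``by the CLT\ldots with probability bounded below'' is not uniform in $p$ and $s$ as stated: you need the probability $\P_p\bigl(D\ge c\abs{Q}^{1/2}\bigr)\ge\delta$ with $\delta$ an absolute constant over the whole range $\eps\abs{Q}^{1/2}\lesssim1$, which requires a quantitative anticoncentration bound rather than a limit theorem. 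The paper supplies this as \thref{lem:devr}, proved with the Paley--Zygmund inequality applied to $\overline{D}$ and $\overline{D}_+$; you would need the same lemma (or a Berry--Esseen substitute) at every scale $\ell(s)$ you use.
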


\begin{theorem}\thlabel{thm:EV_UB}
Let $\lambda_{A}=1$ and $\lambda_B=0$. On $\mathbb Z$, there exists $C>0$ such that
$$\E_p V_\infty \leq C \left( 1 - 2p\right)^{-3}$$
for all $p < 1/2$.
\end{theorem}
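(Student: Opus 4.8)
The plan is to integrate the density against time and split at the characteristic scale of the particle-hole model on $\ZZ$. Write $\delta=1-2p\in(0,1/2)$, so that $\E_p V_\infty=\int_0^\infty \rho_s\,ds$ with $\rho_s=\E_p N_s$. The relevant length is $\ell^\star\asymp\delta^{-2}$, the scale on which the initial excess of $\asymp\delta\ell$ many $B$-particles in an interval of length $\ell$ overwhelms diffusive fluctuations of size $\asymp\sqrt\ell$; the matching time is $t^\star:=\lceil\delta^{-4}\rceil\asymp(\ell^\star)^2$. Below $t^\star$ the process should behave critically, and above it the surplus of $B$-particles should force $\rho_s$ to decay stretched-exponentially. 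I would therefore write $\E_p V_\infty=\int_0^{t^\star}\rho_s\,ds+\int_{t^\star}^\infty\rho_s\,ds$ and aim to bound each piece by $C\delta^{-3}$. (For $p$ bounded away from $1/2$ there is nothing to prove, since $\E_pV_\infty$ is finite and $\delta^{-3}$ is bounded below, so one may assume $\delta$ small.)

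For the first piece, build the $p$-process from the critical $(p=1/2)$ process by independently reassigning each $A$-particle to type $B$ with probability $1-2p$, and couple the dynamics via the path-swapping construction so that $N_s$ for the $p$-process is dominated by $N_s$ for the critical process at every time $s$ — the heuristic being that additional $B$-particles only speed up annihilation (a comparison of this flavour is available in \cite{Cabezas2014,parking}). Then $\int_0^{t^\star}\rho_s\,ds\le\E_{1/2}V_{t^\star}\le C(t^\star)^{3/4}=C'\delta^{-3}$ by \thref{thm:UB}. The exponent $3=\tfrac34\cdot4$ is exactly why $t^\star\asymp\delta^{-4}$ is the right cutoff.

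For the second piece I would prove the sub-critical decay estimate $\rho_s\le Cs^{-1/4}\exp(-c\delta^2\sqrt s)$ for $s\ge t^\star$; this is consistent with $g_1(t)\asymp t^{1/2}$ in \eqref{eq:rho2} and carries the polynomial prefactor needed near $s=t^\star$, where it reads $\rho_{t^\star}\lesssim\delta$, agreeing with the critical value $(t^\star)^{-1/4}$. The substitution $u=\delta^2\sqrt s$ then gives $\int_{t^\star}^\infty\rho_s\,ds\le C\delta^{-3}\int_1^\infty u^{1/2}e^{-cu}\,du\lesssim\delta^{-3}$. To obtain the decay estimate, for an $A$-particle at the root at time $s$ let $I_s$ be the interval it has explored, of length $\asymp\sqrt s$: on the event that the initial configuration on $I_s$ carries a nonnegative surplus of $A$- over $B$-particles a Chernoff bound gives probability $\le\exp(-c\delta^2\sqrt s)$, while on the complement I would insert an artificial barrier of $B$-particles at the two ends of $I_s$ and use monotonicity to dominate the local occupation by that of an essentially critical process, extracting the $s^{-1/4}$ factor from the critical asymptotics \eqref{eq:rho}. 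A more hands-on alternative is to locate sites $x_-<0<x_+$ within distance $\asymp\ell^\star$ of the root that carry enough of a $B$-surplus to prevent any $A$-particle born outside $(x_-,x_+)$ from ever reaching the root, thereby confining the root's $A$-supply to an interval of length $L\asymp\ell^\star$, bounding the occupation it contributes by $\E_{1/2}V_{CL^2}\lesssim L^{3/2}$, and summing the geometric tails of $|x_\pm|/\ell^\star$ to get $\sum_k(1-c)^k(k\ell^\star)^{3/2}\asymp(\ell^\star)^{3/2}=\delta^{-3}$.

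The heart of the argument — and the main obstacle — is the tail estimate, where the difficulty is that annihilation can prolong survival in a way not visible from the initial configuration: an $A$-particle may linger near the root because its neighbouring $B$-particles were consumed by other $A$-particles, so the naive ``excess of $B$-particles annihilates every $A$-particle'' picture has to be earned. The path-swapping coupling is what should let one reduce the relevant surplus computations to the initial configuration up to errors of order $O(\sqrt{t^\star})=O(\ell^\star)$. The remaining delicate points are making the barrier notion precise (a $B$-surplus region from which no $A$-particle returns to the root) and comparing a critical process confined to a box with the full critical process on $\ZZ$ — the truncation deletes both $A$- and $B$-particles and is therefore not manifestly monotone. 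A clean way to sidestep the latter is to reveal the barriers only after time $t^\star$, so that \thref{thm:UB} controls $[0,t^\star]$ and a restart of the sub-critical analysis from the time-$t^\star$ configuration controls $[t^\star,\infty)$.
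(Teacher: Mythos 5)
Your decomposition of $\E_p V_\infty$ at the time scale $t^\star\asymp(1-2p)^{-4}$ is a genuinely different route from the paper's, and its first half is sound: converting $A$-particles to $B$-particles lowers $\zeta_0$ pointwise, so \thref{lem:monotonicity} (applied as a deletion of an $A$-particle followed by an insertion of a $B$-particle) gives $\rho_s\leq\rho_s^{(1/2)}$, and \thref{thm:UB} then bounds $\int_0^{t^\star}\rho_s\,ds$ by $C(t^\star)^{3/4}=C\delta^{-3}$. This cleanly explains why the exponent $3=\tfrac34\cdot 4$ appears. The paper instead never splits in time at all: it works with the sequential half-line process run for $t=\infty$, computes $\E U_k$ exactly via gambler's ruin and a geometric count of returns (\thref{lem:Uk}), and gets the required decay \emph{spatially} from the initial configuration alone, via $\P(D[0,k-1]\geq 0)\approx(2\sqrt{p(1-p)})^k$ (\thref{lem:=0}). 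That decay is a statement about an i.i.d.\ sequence, which is why it is tractable.

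The gap in your argument is the second half. The bound $\int_{t^\star}^\infty\rho_s\,ds\lesssim\delta^{-3}$ rests on the unproven estimate $\rho_s\lesssim s^{-1/4}e^{-c\delta^2\sqrt s}$, which is essentially the subcritical Bramson--Lebowitz asymptotic \eqref{eq:rho2} for \emph{unequal} jump rates --- a statement at least as hard as the theorem you are proving, and one for which nothing in the cited literature is available ( \eqref{eq:rho2} is proved only for $\lambda_A=\lambda_B$). Neither of your two sketches closes it. The Chernoff sketch conflates the survival of a tagged $A$-particle with the sign of the initial discrepancy on its own range: an $A$-particle can be alive at the root at time $s$ even though its explored interval carries a large initial $B$-surplus, because other $A$-particles have cleared those $B$-particles --- this is exactly the correlation problem you name but do not resolve, and resolving it is what the paper's sequential-process machinery (\thref{lem:sequential.process,lem:seq.induction}) is for. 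The barrier sketch has the same issue (a $B$-surplus region is not a priori impenetrable once the surrounding $A$-particles are allowed to chip away at it) plus the non-monotone box-truncation comparison you flag. As written, the heart of the argument is therefore assumed rather than proved; to complete it along your lines you would in effect have to redo the paper's half-line gambler's-ruin analysis, at which point the time-splitting becomes unnecessary.
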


For $d=1$ with $\lambda_B=0$, \thref{thm:EV_LB,thm:EV_UB} show that
\begin{align*}
 \frac{(1-2p)^{-3}}{-\log(1-2p)} \lesssim \E_pV_\infty\lesssim (1-2p)^{-3},
\end{align*}
determining the critical exponent up to logarithmic terms.
%
While our upper bound on $\E_p V_\infty$ with $p<1/2$, $d=1$, and $\lambda_B=0$ is sharp
(at least up to logarithmic factors), we mention that for $d\geq2$ and $\lambda_B<\lambda_A$
it remains open to show the much weaker statement that $\E_p V_\infty<\infty$ for all $p<1/2$.




Our final results concern the mean-field behavior $\rho_t\asymp t^{-1}$. \commHL{This was}
first proven by Bramson and Lebowitz on lattices of dimension 4 and higher
in the case of equal \commHL{particle densities and} jump rates. \commHL{While it is believed to hold for all jump rates, it has not been proven to occur with unequal jump
rates on any graph.} We consider DLAS with $\lambda_B=0$ on $\BT$, 
the bidirected $2d$-regular tree, which is the $2d$-regular tree where each vertex has $d$ edges directed away from it and $d$ edges directed towards it.
We prove that $\E V_t$ diverges logarithmically, which strongly suggests the mean-field
density decay.
The rationale for working on $\BT$ is that the $A$-particles approaching the root from different branches
of the tree evolve independently, but analysis remains difficult even with this advantage.

\begin{theorem} \thlabel{thm:tree}
Let $\lambda_A=1$ and $\lambda_B=0$. For some positive absolute constants $c$ and $C$,
  it holds for all $d\geq 2$ on $\BT$ with $p=1/2$ that
	$$c\log t\leq \E V_t\leq C \log t$$
	for all large $t$. 
\end{theorem}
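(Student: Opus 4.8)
The plan is to reduce the theorem to one sharp estimate on a first-passage time, exploit the tree structure to turn that estimate into a recursion, and then push the recursion through.

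\textbf{Reduction to a hitting time.} Let $\sigma$ be the first time a particle steps into the root $\root$ from one of its $d$ in-neighbors (so $\sigma$ ignores the particle initially at $\root$). First I would prove the identity $\rho_t=\tfrac12\,\P(\sigma>t)$, whence $\E V_t=\int_0^t\rho_s\,ds=\tfrac12\,\E[\sigma\wedge t]=\tfrac12\int_0^t\P(\sigma>s)\,ds$, so that, since $\P(\sigma>\cdot)$ is nonincreasing, \thref{thm:tree} becomes \emph{equivalent} to $\P(\sigma>t)\asymp 1/t$. To get the identity, let $a(t)$ be the expected number of particles that have entered $\root$ by time $t$ (each does so at most once, as $\root$'s out-neighbors cannot feed back into $\root$) and differentiate $a$ in two ways. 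On one hand, a particle sitting at an in-neighbor $v$ hops to $\root$ at rate $1/d$ and the expected number of $A$-particles at $v$ at time $t$ is $\rho_t$ by vertex-transitivity of $\BT$, so $a'(t)=d\cdot\tfrac1d\,\rho_t=\rho_t$. On the other hand, the $d$ in-neighbors feed $\root$ through independent subtrees, and conditioning on whether $\root$ is initially a $B$ (in which case its first entrant is annihilated), together with the $\mathrm{Exp}(1)$ holding times, yields a linear Volterra equation for $a$ whose solution satisfies $a'(t)=\tfrac12-\tfrac12\P(\sigma\le t)$. Comparing the two expressions for $a'$ gives the identity; in any case, the weaker $\E V_t\asymp\int_0^t\P(\sigma>s)\,ds$ would also suffice and is routine.

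\textbf{A recursion on the tree.} Since $\BT$ is directed, only particles lying on a fully downstream-oriented path to $\root$ can ever reach it; these occupy a rooted $d$-ary tree, within which a particle moves one step toward $\root$ with probability $1/d$ at each jump and otherwise leaves that tree forever. By independence of the $d$ branches, $g(t):=\P(\sigma>t)=h(t)^d$, where $h(t)$ is the probability that a fixed in-neighbor sends no particle to $\root$ before time $t$; conditioning on that in-neighbor's initial type and unrolling one further level expresses $h$ — in fact the entire point process of entry times — through independent copies of itself, i.e.\ as a recursive distributional equation. The annihilation enters in exactly one place: at every vertex that starts as a $B$, the \emph{first} particle to arrive is removed before it can move on. This ``delete the first arrival'' operation is precisely what forces $g(t)\to 0$; were it dropped, the same recursion would give $g$ bounded below and hence $\E V_t\asymp t$ — the mean-field-violating behaviour one must rule out.

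\textbf{The core estimate.} Equivalently, writing $q_m$ for the probability that a particle at distance $m$ from $\root$, conditioned to make its first $m$ jumps all toward $\root$, escapes annihilation en route, I want $q_m\asymp 1/m$: absent annihilation, in expectation $\tfrac12$ particles per distance-$m$ shell reach $\root$, a fraction $\asymp q_m$ of them survive, and only shells with $m\lesssim t$ arrive by time $t$ (arrival from distance $m$ takes time $\mathrm{Gamma}(m,1)$), so $\E V_t\asymp\sum_{m\lesssim t}q_m\asymp\log t$, consistent with $\rho_t\asymp 1/t$. For the upper bound I would verify that $g(t)\le C/t$ is (nearly) preserved by the recursion above — equivalently, bound the recursive moments so that $\E[\sigma\wedge t]\le C\log t$ — which amounts to quantifying how fast the nested deletions decorrelate the arrival streams from successive levels; this also yields $\P(\sigma=\infty)=0$, i.e.\ that a.s.\ infinitely many particles eventually visit $\root$ (the relevant fixed-point equation admits only the two solutions ``a.s.\ finitely many'' and ``a.s.\ infinitely many,'' and the crude observation that an in-neighbor starting as an $A$ hops into $\root$ with probability $1/d$ kills the former). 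For the lower bound I would run the recursion the other way, or bound $\rho_t$ from below directly by keeping only distance-$\asymp t$ source particles together with the not-too-unlikely event that each of the $\asymp m$ intermediate vertices is cleared by the $\mathrm{Gamma}$-distributed time the particle passes through it — again controlled by the same recursion.

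\textbf{Main obstacle.} Everything in the first two paragraphs — the reduction to $\sigma$, the branch decomposition, the Volterra bookkeeping — is soft. The work is the self-referential estimate $q_m\asymp 1/m$, equivalently $\P(\sigma>t)\asymp 1/t$, with the correct polynomial rate on \emph{both} sides: the recursion is a genuine recursive distributional equation depending on the whole point process of arrival times one level up, not merely on its minimum, and the ``delete the first arrival'' operation interacts with it nonlinearly, so neither a naive monotone iteration nor a one-shot second-moment bound closes it. A multiscale argument tracking how the density-$\tfrac12$ field of $B$-particles degrades survival by exactly a logarithmic amount is what the tree section must supply.
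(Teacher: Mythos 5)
Your reduction and branch decomposition are fine as far as they go, but the proposal defers the entire analytic content of the theorem to the unproven ``core estimate'' $\P(\sigma>t)\asymp 1/t$ (equivalently $q_m\asymp 1/m$), and you yourself concede that neither monotone iteration nor a second-moment bound closes the recursive distributional equation you set up. That is not a sketch with a fillable gap; it is the whole theorem. Worse, the reduction points at a statement that is \emph{strictly stronger} than what is needed. The theorem asserts only $\E V_t=\int_0^t\rho_s\,ds\asymp\log t$; your claimed equivalence with the pointwise bound $\P(\sigma>t)\asymp 1/t$ fails in the upper-bound direction (monotonicity of $\P(\sigma>\cdot)$ together with $\int_0^t\P(\sigma>s)\,ds\leq C\log t$ yields only $\P(\sigma>t)\lesssim (\log t)/t$). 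In fact the pointwise statement is essentially the density decay $\rho_t\asymp 1/t$, which the paper explicitly states it cannot prove: the remark after \thref{prop:sb.coupling} explains that $\rho_t=O(1/t)$ would follow only if one could show $\P(W_n=0)=O(1/n)$, which in turn would require a log-concavity property of $W_n$ that is only conjectural. So you have routed the proof through an open problem.

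The paper's actual mechanism, which your proposal does not contain, is to discretize differently: truncate the initial configuration to levels $1,\dots,n$ and let $W_n$ be the \emph{number} of $A$-particles ever visiting the root (time is removed entirely, and reinstated at the end via \thref{lem:finite.range.truncation} and Lemma~\ref{lem:VW.relationship}). The directedness of the tree gives the clean recursion $W_{n+1}\eqd\Aa W_n$ of \eqref{eq:W_n.Aa}, and the identity $\E\Aa X=\E X+(1-p)\P(X=0)$ at $p=1/2$ converts the growth of $\E W_n$ into a question purely about the atom at zero. The lower bound then needs only the elementary anticoncentration estimate $\P(W_{n+1}=0)\geq 4^{-\E W_n-1}$ (\thref{lem:lower.nonconcentration}), and the upper bound needs $\P(W_n=0)\leq e^{-q\E W_n}$, obtained for a dominating log-concave sequence $U_n$ via a size-bias coupling (\thref{prop:sb.coupling} and \thref{thm:concentration}). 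This is the ``nonlinear interaction'' your last paragraph identifies as the obstacle, and it is resolved not by a multiscale analysis of the arrival point process but by a concentration inequality for an integer-valued recursion. Without some substitute for that step, your argument does not close.
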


Finally, as in Theorems~\ref{thm:EV_LB} and \ref{thm:EV_UB}, we investigate how quickly
$\E_p V_\infty$ diverges as $p\uparrow 1/2$.
\begin{theorem}\thlabel{thm:tree.exponent}
	Let $\lambda_A=1$ and $\lambda_B=0$. For some positive absolute constants $c$, $C$, and $\eta$, it holds 
  on $\BT$ for all $\frac{1}{2}-\eta<p<\frac{1}{2}$ and $d\geq 2$ that
  \begin{align*}
    c\log\left(1-2p\right)^{-1}\leq \E_p V_{\infty}\leq C\log\left(1-2p\right)^{-1}.
  \end{align*}
\end{theorem}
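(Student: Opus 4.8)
\emph{Sketch of proof.} Write $\delta=\tfrac12-p$ and fix a cutoff $t^*=\delta^{-K}$, where $K$ is an absolute constant. The conceptual picture is that the density-$p$ process on $\BT$ mimics the critical process while $s\lesssim t^*$, whereas the surplus of $B$-particles makes $\rho_s(p)$ decay fast once $s\gg t^*$; combined with $\E_p V_\infty=\int_0^\infty\rho_s(p)\,ds$ and the critical estimate $\E_{1/2}V_{t^*}\asymp\log t^*$ from \thref{thm:tree}, this predicts $\E_p V_\infty\asymp\log t^*=K\log\delta^{-1}$. What makes this workable is that, the target being logarithmic, the exponent $K$ is irrelevant: any fixed polynomial cutoff in $\delta^{-1}$ gives the same order, so the two regimes need only be controlled up to polynomial slack.

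For the upper bound I would compare with criticality through the re-marking coupling (a standard monotone coupling, cf.\ Section~\ref{sec:couplings}): sample the critical configuration, independently re-mark each $A$ as a stationary $B$ with probability $2\delta$, and run the particles with shared clocks. A routine monotonicity argument gives $N_s^{p}\le N_s^{1/2}$ for all $s$, hence $\int_0^{t^*}\rho_s(p)\,ds\le\E_{1/2}V_{t^*}\le C\log t^*\lesssim\log\delta^{-1}$. It remains to bound the tail $\int_{t^*}^\infty\rho_s(p)\,ds$ by $O(1)$, for which I would use the branch decomposition emphasized in the introduction---the catchment of the root is a rooted $d$-ary tree on which an $A$-particle moves one step toward the root at rate $1/d$ and otherwise leaves the catchment---so the tail is, up to $O(1)$, at most the expected number of $A$-particles that start at depth $\gtrsim t^*$ in one branch and still reach the root. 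Such a particle must both be routed all the way to the root (probability $d^{-\ell}$ from depth $\ell$) and survive a configuration carrying a positive-drift surplus of $B$-particles (probability $\lesssim e^{-c\delta\ell}$, a quantitative transience estimate of parking/ballot type with drift $\asymp\delta$); since there are $d^{\ell}$ vertices at depth $\ell$, the expected number from depth $\ge t^*$ is $\lesssim\sum_{\ell\ge t^*}e^{-c\delta\ell}=o(1)$ once $K>1$.

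For the lower bound I would use $\E_p V_\infty\ge\E_p V_{t^*}$ with $t^*=\delta^{-k}$, $k<1$, and compare $\E_p V_{t^*}$ with $\E_{1/2}V_{t^*}\asymp\log t^*$ under the same coupling. Since $V_{t^*}^{1/2}-V_{t^*}^{p}\ge 0$ it suffices to show $\E\bigl[V_{t^*}^{1/2}-V_{t^*}^{p}\bigr]=o(\log\delta^{-1})$. The loss comes only from re-marked sites, and by sub-additivity of the damage I would bound it by $2\delta\sum_{w}\E_{1/2}[\text{root-occupation up to }t^*\text{ lost by re-marking }w\text{ alone}]$, the sum over catchment vertices $w$. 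On the directed tree a single re-mark at a depth-$\ell$ vertex $w$ removes one $A$ and inserts a stationary $B$; because all motion is toward the root this destroys at most a bounded amount of root-occupation and produces no growing cascade, and within time $t^*$ the event that it affects the root at all has probability $\lesssim d^{-\ell}\log t^*$ at criticality. Summing over the $d^\ell$ vertices at depth $\ell\le t^*$ gives $\sum_w\E_{1/2}[\cdots]\lesssim t^*\log t^*$, so the loss is $\lesssim\delta t^*\log t^*=\delta^{1-k}\log\delta^{-1}=o(1)$. \textbf{The main obstacle is precisely this bounded-damage, no-cascade claim}---making rigorous that the $O(\delta)$-density of re-marks cannot deplete more than a vanishing fraction of the root's occupation before the correlation length $\asymp\delta^{-2}$ is reached, i.e.\ that near-critical behavior is stable under this perturbation.

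Finally, I would note an alternative likely cleaner on $\BT$: the catchment tree is exactly self-similar, so the $A$-flux $N$ out of a generic vertex satisfies a distributional fixed-point equation $N\eqd\Bin(L,1/d)$, where $L\eqd 1+\sum_{i=1}^d N_i$ with probability $p$ and $L\eqd\bigl(\sum_{i=1}^d N_i-1\bigr)^{+}$ with probability $1-p$ (with $N_1,\dots,N_d$ i.i.d.\ copies of $N$), and $\E_p V_\infty\asymp\E_p N$. Converting to generating functions turns this into a single functional equation, exactly critical at $p=\tfrac12$; one then shows that for $\delta>0$ the singularity at $s=1$ is rounded at scale $\asymp\delta^{2}$, so that $\P_p(N\ge k)\asymp k^{-1}$ for $k\lesssim\delta^{-2}$ and decays thereafter, whence $\E_p N\asymp\log\delta^{-1}$. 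This replaces the cascade estimate by an analytic fixed-point analysis, which I would try first.
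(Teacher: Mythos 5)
Your architecture---compare with the critical process up to a polynomial cutoff $t^*=\delta^{-K}$ and control the two regimes separately---is genuinely different from the paper's route, which never compares to criticality: the paper sets up the recursion $W_{n+1}\eqd\Aa W_n$ for the number of root visits in the level-$n$ truncation, observes via \thref{lem:A.growth} that $\E W_{n+1}-\E W_n=-2\epsilon+(1-p)\P(W_n=0)$, and then sandwiches $\P(W_n=0)$ between a concentration bound $e^{-q\E W_n}$ (proved by a size-bias coupling for a dominating log-concave sequence, \thref{prop:sb.coupling}) and an anticoncentration bound $4^{-\E W_n-1}$ (\thref{lem:lower.nonconcentration}); the fixed points of the resulting difference inequalities are both $\asymp\log(1/\epsilon)$. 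The solid part of your plan is the first half of the upper bound: monotonicity (\thref{lem:monotonicity}) does give $\int_0^{t^*}\rho_s(p)\,ds\leq\E_{1/2}V_{t^*}=O(\log t^*)$ via \thref{thm:tree}.

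The gaps are exactly the two estimates you gloss over, and they carry essentially all of the difficulty. First, the tail bound $\int_{t^*}^\infty\rho_s(p)\,ds=O(1)$ needs $\P(\text{an $A$-particle at depth }\ell\text{ reaches the root})\lesssim d^{-\ell}e^{-c\delta^a\ell}$. On $\BT$ this is not a ballot-type estimate: the $B$-particles on the particle's path are cleared by the fluxes out of the side subtrees hanging off that path, each a full copy of the system, so an upper bound on survival requires a lower bound of the form $\P(\text{a given vertex retains its $B$ forever})\gtrsim\delta^{a}$ plus control of correlations along the path. That retention probability equals $(1-p)\P(W_\infty=0)$, and the only route to bounding it below (e.g.\ via \thref{lem:lower.nonconcentration}) passes through an upper bound on $\E W_n$---i.e.\ through the very theorem you are proving. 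Second, for the lower bound, subadditivity of the damage over re-marked vertices is not delivered by the paper's change-tracking machinery (\thref{lem:change.tracking} handles only removal of a terminal segment of $A$-particles, not simultaneous deletion of $A$'s and insertion of $B$'s), and the per-vertex ``bounded damage, no cascade'' claim is, as you acknowledge, unproven; an inserted $B$ at depth $\ell$ propagates as a tracer that can occupy the root for time up to $t^*$, so even granting subadditivity the naive bound is $\delta(t^*)^2$ rather than $\delta t^*\log t^*$ (still $o(1)$ for $k<1/2$, but it shows the estimate is not self-evident). Your generating-function alternative has the same status: the rounding of the singularity at scale $\delta^2$ and the tail $\P(N\geq k)\asymp k^{-1}$ are precisely the hard facts to be established, not inputs. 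The missing ingredients---a quantitative lower bound on $\P(W_\infty=0)$ and a concentration bound for $W_n$---are the ones the paper supplies, so I would adopt its recursion-plus-size-bias argument rather than trying to patch the perturbative comparison.
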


\subsection{Definitions and notation} \label{sec:notation}

DLAS on a graph with vertex set $\mathcal{V}$
is a continuous-time Markov process $\zeta_t = (\zeta_t(v))_{v \in \mathcal{V}}$
on state space $\ZZ^\mathcal{V}$.
The quantity $\abs{\zeta_t(v)}$ denotes the number
of particles at site~$v$ at time~$t$. The sign of $\zeta_t(v)$ is positive if these
particles are of type~$A$ and negative if they are of type~$B$.
The dynamics of the process are as described earlier:
particles of types $A$ and $B$ jump at rates $\lambda_A$ and $\lambda_B$,
respectively;
a particle at vertex~$u$
takes its next step to $v$ with probability $K(u,v)$, where $K$ is a given
random walk kernel; and when a particle jumps onto a site with a particle of the opposite type,
both particles are instantly annihilated. The infinitesimal generator corresponding to this description
is given explicitly in \cite[Section~2]{vicius_DLA}. A graphical construction
proving that such a Markov process exists is sketched in \cite{BL4} and given in \cite{vicius_DLA}.

If we do not mention
the random walk kernel $K$ specifically, then we take it to be the nearest-neighbor simple random walk kernel
on the given graph.
Our default initial conditions consist of one
particle per site, each of which independently is an $A$-particle with probability~$p$
or a $B$-particle with probability~$1-p$. We will also frequently consider these initial conditions
restricted to a subset of the graph, meaning that rest of the graph is initially devoid of particles.

In the previous section, we defined the quantities $N_t$, $\rho_t$, and $V_t$. 
We will also use the notation $D(H)$ to denote the \emph{discrepancy} between
$A$-particles and $B$-particles on a subgraph $H$, defined as the number of $A$-particles
minus the number of $B$-particles initially placed in a subgraph $H$ in a given instance of DLAS. We let $\mathbb T_{r}^d$  denote the $d$-dimensional torus of radius $r$, which has vertex set $(-r,r]^d\cap \ZZ^d$ and nearest neighbor edges with the canonical identifications of opposite sides.
We denote by $\root$ the origin of the lattices and torus, and also the root of the bidirectional tree.

\subsection{Overview of proofs}
\label{sec:proof.overview}

Our results rely on a variety of couplings that allow us to make comparisons to modified versions
of the systens. 
We sketch the main ideas below.

\subsubsection*{\thref{thm:LB,thm:EV_LB}, lower bounds for $d=1,2,3$}
The idea behind \thref{thm:LB} is to compare $\rho_t$ to the particle density of $\mathbb T_{2r}^d$ with $r = C \sqrt{t\log t}$, which we denote as $\bar \rho_t$. 
The width of the torus is chosen so that number of particles at the origin up to time~$t$
is unlikely to be affected by the particles beyond distance $r$ in the processes on $\mathbb Z^d$ and on $\mathbb T_{2r}^d$ (see \thref{lem:torus}). So it is enough to estimate $\bar \rho_t$.

As the number of vertices in $\mathbb T_{2r}^d$ is on the order of $r^d$, standard \commHL{central limit theorem} estimates give that with positive probablity, there are $r^{d/2}$ more $A$- than $B$-particles in the initial configuration. Since the torus is a finite graph, these surplus $A$-particles are never annihilated.
Translation invariance ensures that   
\begin{align*}
  \bar\rho_t\gtrsim \frac{r^{d/2}}{\abs{\TT_{2r}^d}} \gtrsim (t \log t)^{-d /4}.
\end{align*}

The critical exponent bound given in Theorem~\ref{thm:EV_LB} uses the same idea, but optimizes the size of the torus as a function of $p$ and uses more precise estimates for the $A$-particle surplus in the initial configuration. Similar ideas are used when making estimates on the \emph{correlation length} in first passage percolation \cite{fpp}. Note that Bramson and Lebowitz \cite{BL4} also made use of fluctuations in the initial configuration when studying the symmetric speeds case. See the heuristic at \cite[p.\ 4]{BL4}.

\subsubsection*{\thref{thm:UB,thm:EV_UB}, upper bounds for $d=1$}
The starting point is to consider a sequential version of DLAS in which all $B$-particles are initially present and $A$-particles are released one 
at a time according to an arbitrary prescribed ordering. The first $A$-particle travels until it annihilates with a $B$-particle or time~$t$ elapses.
Then, in this new environment, the next $A$-particle is released and does the same, and so on.
For $\lambda_B =0$, \thref{lem:sequential.process} establishes that
the occupation time of the root is stochastically larger in this variant than in the original process. Moreover, the subadditivity result in \thref{lem:polarized_DLAS} reduces the problem
to studying the one-sided version of DLAS with particles only at the positive integers.

For \thref{thm:UB}, we run the sequential process on the half-line,
releasing $A$-particles in order of their distance from the origin.
We show that an $A$-particle starting at $k$
has probability $O(k^{-1/2})$ of visiting the origin in time~$t$ (\thref{lem:seq}). 
Summing over all $k$ from $1$ to $\sqrt{t}$ and using random walk concentration bounds
to bound the contributions of particles starting beyond distance $\sqrt{t}$,
we obtain a bound of $O(t^{1/4})$ on the number of distinct particles
visiting the origin in the half-line process. 
If a particle visits the origin,
we expect it to spend at most time $O(t^{1/2})$ up to time $t$ there, by basic properties of random walk,
yielding the $O(t^{3/4})$ bound for the one-sided sequential process.



The sequential release of particles is essential to the proof of \thref{lem:seq}, which gives a bound
on the probability of $A$-particles visiting the origin. The main idea
is that for an $A$-particle starting at $k$ to reach $\root$, the $A$-particles in $[1,k-1]$
must have already annihilated all $B$-particles that were initially  there.
The typical surplus of the $A$-particles against the $B$-particles in $[1,k-1]$ is $O(k^{1/2})$,
and the probability of the $A$-particle at $k$ reaching $\root$ is maximized if all the surplus
$A$-particles annihilate $B$-particles to the right of position $k$. 
We then estimate the chance of the $A$-particle at $k$ reaching $\root$ using a refined ``gambler's ruin'' estimate (\thref{lem:gamblers.ruin}), in which we bound the chance of the particle hitting $\root$ by time $t$ and
before visiting the first remaining $B$-particle located approximately $k^{1/2}$ steps to its right.

{To prove \thref{thm:EV_UB}, as in the proof of \thref{thm:UB} we use 
a gambler's ruin approach to bound the time spent at $\root$ by an $A$-particle starting from
position~$k$ in terms of the number of surplus $A$-particles against $B$-particles
on $[0,k]$. When we sum this bound over all $k$, we obtain a bound on $\E V_t$ in terms
of the expected area underneath the positive excursions of a $p$-biased random walk
for $p<1/2$, which we then compute.}

It seems challenging to us to generalize our approach to higher dimensional lattices, because there is no longer a simple  way to control  the probability an $A$-particle at distance $k$ reaches the origin. One would have to understand the spatial correlations between unvisited $B$-particles as the process evolves. These correlations may be significant (see \cite[Figure 2]{parking}.) In the $\lambda_B>0$ case, it also appears difficult to make analogous estimates even in $d=1$, since
the coupling of the sequential version of DLAS with the usual one depends on $B$-particles remaining still.

\subsubsection*{\thref{thm:tree,thm:tree.exponent}, results on bidirected tree $\BT$}
These results are proven rather differently from the other estimates.
To avoid some technical issues in this explanation, consider DLAS in discrete time.
Let $W_n$ denote the number of visits to the root by $A$-particles in $n$ steps of this process and
let $v_1,\ldots,v_d$ be the vertices whose out-edges lead to the root
of $\BT$.
We can express $W_{n+1}$ in terms of the number of visits to  $v_1,\ldots,v_d$
in time~$n$. From the self-similarity of the tree, each of these quantities is an independent
copy of $W_n$. We thus obtain a distributional equation \eqref{eq:W_n.Aa} giving the law of $W_{n+1}$ in terms
of the law of $W_n$. Analysis of this equation shows that the growth of the mean of $W_n$ depends on
its concentration (see \thref{lem:A.growth}). 

We then prove concentration and anticoncentration bounds for $W_n$.
The anticoncentration bound (\thref{lem:lower.nonconcentration}) proves the
lower bounds in \thref{thm:tree,thm:tree.exponent}. 
\commHL{Note that Cabezas, Rolla, and Sidoravicius show a general result 
\cite[Theorem~4]{vicius_DLA} that
$\rho_t=\Omega(1/t)$ when $p=1/2$ on generously transitive graphs with reflectable jump distributions. The lower bound in \thref{thm:tree} of matching order, however, does not follow from this general result, since the jump distribution on directed bidirectional tree is not reflectable.} The upper bounds on $\E W_n$ are a consequence of the concentration
bound on $W_n$, which we prove with the technique of size-bias couplings.
The idea of this technique is that stochastic
inequalities between a random variable and its size-bias transform lead to
concentration inequalities for the random variable. The problem then turns to 
computing the size-bias transform of $W_n$ and showing that it does not differ from $W_n$ by
too much. To size-bias a sum of independent terms, one chooses a single term at random to bias, leaving
the others unaffected (hence the title of the survey paper \cite{AGK}). Thus, to size-bias $W_n$,
we bias the number of root visits coming from one of its children. Recursively, this creates a ray
on which the process is altered. The result is something like placing an extra $A$-particle at every vertex 
along this ray, and concentration then follows from showing that this adds only $O(1)$ visits to the root.
The actual details are more complicated; see Lemma~\ref{lem:spine.induction} and Proposition~\ref{prop:sb.coupling} and the discussion thereafter.

These size-biasing techniques seem novel to us in the context of particle systems.
To put them in context, size-biasing has a long history in Stein's method for distributional
approximation (see \cite[Sections~3.4 and 4.3]{Ross}). More recently, size-biasing methods have
been developed for proving concentration \cite{GG,AB,CGJ}. On a different track,
a technique of creating a spine with altered behavior to bias a statistic of a Galton--Watson tree was used
in \cite{LPP} to prove the Kesten--Stigum Theorem 
(other good references are \cite[Chapter~12]{lyons2016probability} and \cite[Section~7]{AGK}). 
This technique was later used to prove
many results on branching processes; for example, see \cite{HH} and \cite[Chapters~4 and 5]{Shi}.
These two uses of size-biasing, Stein's method and spine techniques, finally met
in \cite{PekozRollin} where Stein's method together with the construction from \cite{LPP}
is used to prove a quantitative version of Yaglom's theorem on critical Galton--Watson trees.
Of all uses of size-biasing, the most relevant to this paper is used in \cite{+recursion1} to analyze
the Derrida--Retaux model from statistical physics. This model is essentially
DLAS with $\lambda_B=0$ but on a directed rather than bidirected tree; see
\cite{DerridaRetaux,+recursion2,+recursion3}.

\subsection{Organization}

Section~\ref{sec:lemmas} contains statements of some important lemmas 
as well as descriptions of variants of DLAS that we relate to the original process. Section~\ref{sec:lemmas.proofs} contains the proofs of these lemmas. We prove our main results for DLAS on lattices (Theorems \ref{thm:LB}, \ref{thm:UB}, \ref{thm:EV_LB}, and \ref{thm:EV_UB}) in Section \ref{sec:lattice}. In Section~\ref{sec:tree}, we prove our main results on DLAS on bidirected trees (Theorems~\ref{thm:tree} and \ref{thm:tree.exponent}). The appendix contains some useful random walk estimates.

\section{Key lemmas}
\label{sec:lemmas}

In this section, we present a toolkit of lemmas for DLAS, whose proofs are given in Section~\ref{sec:lemmas.proofs}. A reader more interested in how the lemmas are applied may safely read the statements and skip ahead to Section~\ref{sec:lattice}.
We start with a quick overview of the lemmas and where they are used.

	\begin{itemize}
		\item \thref{lem:monotonicity} asserts that removing $A$-particles results in fewer visits to the root by $A$-particles. We use this only in one place in Section~\ref{sec:tree}.
		\item \thref{lem:finite.range.truncation} shows that $\rho_t$ does not change much when we remove
      all particles beyond distance $\sqrt{C t \log t}$ from the root of the torus or lattice or beyond distance $ct$ from the root of the bidirected tree. We use the lemma in the proofs of \thref{lem:torus},
      \thref{thm:tree}, and \thref{thm:tree.exponent}.		
		\item In \thref{lem:torus} we relate DLAS on $\mathbb Z^d$ to DLAS on a torus. We use this to prove
      the lower bounds on the lattice, \thref{thm:LB,thm:EV_LB}.
		
		\item \thref{lem:sequential.process} shows that when $\lambda_B=0$,
      the occupation time of $\root$ in DLAS is stochastically larger if we release $A$-particles
      one at a time, letting each one run for a fixed time~$t$ or until annihilation before running
      the next. This result is crucial for proving \thref{thm:UB,thm:EV_UB},
      our upper bounds for one-dimensional DLAS.
		\item \thref{lem:polarized_DLAS} is a subadditivity result saying that when $\lambda_B=0$, 
      dividing the particles
      of DLAS into two sets and running them as two independent systems makes the combined number
      of visits to the root stochastically larger.
      We need this for Theorems~\ref{thm:UB} and \ref{thm:EV_UB}.
	\end{itemize}

Now we state the lemmas precisely. {Several statements are for DLAS on transitive unimodular graphs i.e., graphs with a transitive unimodular automorphism group. This includes the graphs $\mathbb Z^d, \mathbb T_r^d, \vec{\mathcal{T}}_{2d}$ to which our theorems apply. We work at this level of generality when possible, since \cite[Section 6]{vicius_DLA} proved that the graphical construction of DLAS is well-defined in this setting. We describe the construction in Section~\ref{sec:tracers} and use it throughout our arguments.} 



\begin{lemma}\thlabel{lem:monotonicity}
  Let $\zeta$ and $\zeta'$ be two instances of DLAS on a transitive unimodular graph. 
  Let $V_t$ and $V'_t$ be the occupation times of the root by $A$-particles up to time $t$.
  If $\zeta_0(v)\leq\zeta'_0(v)$ a.s.\ for all $v$, then $E V_t \leq E V'_t$.
\end{lemma}

\begin{lemma}\thlabel{lem:finite.range.truncation}
  Let $\rho_s^{(r)}$ be the density of $A$-particles at the root in DLAS 
  with $0 \leq \lambda_B\leq 1$ and 
   \commHL{$\lambda_{A}=1$}
  at time~$s$
  with particles beyond graph distance $r$ from the root removed
  from the initial configuration in $\BT$ or outside of $[-r,r]^d$ removed from the initial configurations in  $\mathbb T_{2r}^d$ and $\mathbb Z^d$. 
  
  For $\BT$ with $d \geq 2$ and $r = \lceil c t \rceil$, it holds for some absolute constant $c$ and all $s\leq t$ that
  \begin{align*}
    \abs[\big]{\rho_s-{\rho}^{(r)}_s}\leq e^{-t}.
  \end{align*}
For the torus $\mathbb T_{2r}^d$ and for $\mathbb Z^d$,
  \begin{align*}
    \abs[\big]{\rho_t-{\rho}^{(r)}_t}\leq C_1 t^{d} e^{-C_0 r^2/t}
  \end{align*}
for constants $C_0,C_1>0$ and all $r,t$ satisfying $1\leq\sqrt t\leq r\leq 2t$.
\end{lemma}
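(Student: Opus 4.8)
The plan is to prove both estimates by controlling the event that some particle originally outside the ball of radius $r$ reaches the root by time $t$ (or $s$); on the complement of this event the $A$-particle count at the root is unaffected by the truncation, and by \thref{lem:monotonicity} the truncation always decreases $N_t$, so the difference $|\rho_t - \rho_t^{(r)}|$ is bounded by the expected number of root-visits contributed by particles starting outside the ball, which in turn is at most (number of far particles) $\times$ (probability a given far particle reaches the root by time $t$). The key point is that a particle's trajectory in DLAS is a continuous-time simple random walk run at rate at most $\lambda_A \vee \lambda_B \le 1$ up to annihilation; in particular, to reach the root from distance $\ge r$ it must make a net displacement of at least $r$, an event for which standard Gaussian-type tail bounds for rate-$1$ continuous-time random walk apply regardless of the annihilation dynamics. (More carefully, one should couple each particle to the full random-walk path it would follow ignoring annihilation, as in the graphical construction described before \thref{lem:monotonicity}, and bound the max displacement of that path.)

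For the torus $\TT_{2r}^d$ and $\ZZ^d$: the number of sites at $\ell^\infty$-distance exactly $m$ from the root is $O(m^{d-1})$, so the expected number of particles ever reaching the root is at most $\sum_{m > r} C m^{d-1} \cdot \P[\max_{u\le t} \|S_u\|_\infty \ge m]$ where $S$ is a rate-$1$ continuous-time SRW on $\ZZ^d$. Using the reflection/union bound $\P[\max_{u\le t}\|S_u\|_\infty \ge m] \le C d\, e^{-C_0 m^2/t}$ valid for $m \le 2t$ (say; for $m$ large compared to $t$ one uses Poisson large-deviation bounds on the number of jumps, but the hypothesis $r \le 2t$ means we stay in the Gaussian regime), the sum over $m > r$ is dominated by its first term up to a polynomial-in-$t$ factor, giving the claimed $C_1 t^d e^{-C_0 r^2/t}$ after adjusting $C_0$ slightly. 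The power $t^d$ is a crude accounting of the $\sum m^{d-1}$ and the geometric-series constant; one does not need to be efficient here.

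For $\BT$ with $r = \lceil ct \rceil$: the tree is exponentially growing, so there are up to $(2d)^m \le e^{Cm}$ vertices at distance $m$, but a particle at distance $m$ from the root must traverse all $m$ intermediate edges, and a continuous-time walk on a graph of bounded degree run for time $t$ makes at most, say, $2dt$ jumps except with probability $e^{-\Omega(dt)}$ by a Poisson tail bound; in particular it cannot reach the root from distance $m$ if $m > 2dt$, and more usefully the probability of reaching the root from distance $m$ decays like $e^{-cm}$ (each of the $m$ forced steps toward the root has probability $\le 1/2$, or one can use a direct Poisson-jump-count bound). Choosing the absolute constant $c$ small enough that $e^{Cm} \cdot e^{-c'm}$ summed over $m > ct$ is at most $e^{-t}$ yields the bound, uniformly over $s \le t$ since we have bounded the probability that \emph{any} far particle \emph{ever} reaches the root within time $t$.

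The main obstacle is the careful handling of particles very far out relative to $t$ — i.e., making the union bound over the infinitely many far vertices actually converge — which on $\ZZ^d$ requires switching from the Gaussian tail to a Poisson-jumps tail once $m \gg t$, and on $\BT$ requires the constant $c$ in $r = \lceil ct\rceil$ to be tuned against the growth rate $(2d)$ of the tree; getting a clean bound uniform in $d$ for the tree is the part needing the most care. A secondary technical point is justifying that annihilation only helps (each particle's actual path is a sub-path, up to annihilation time, of an unconstrained random walk it is coupled to), which is exactly what the graphical construction and \thref{lem:monotonicity} provide.
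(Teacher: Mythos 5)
Your overall strategy---bound $|\rho_t-\rho_t^{(r)}|$ by the expected number of far particles that reach the root, then control that expectation by random-walk displacement estimates---matches the paper's, and your displacement estimates (moderate deviations for distances up to $2t$, Poisson jump-count tails beyond) are exactly the ones the paper uses. But the reduction itself has a genuine gap. First, \thref{lem:monotonicity} does not show that truncation decreases $N_t$: removing all particles beyond distance $r$ deletes both $A$- and $B$-particles, so neither initial configuration dominates the other pointwise ($\zeta_0$ drops from $+1$ to $0$ at far $A$-sites but rises from $-1$ to $0$ at far $B$-sites), and indeed deleting a far $B$-particle can only \emph{increase} the $A$-count at the root. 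Second, and more seriously, it is false that the $A$-count at the root is unaffected on the complement of the event that some far particle's (unannihilated) random-walk path reaches the root: a far $B$-particle can annihilate a near $A$-particle far from the root, which changes which near particles survive, and this perturbation can propagate to the root through a chain of collisions among near particles without any far particle's own path ever approaching the root. The paper resolves exactly this by (i) splitting the truncation into two monotone steps---first delete the far $A$-particles, then swap the roles of $A$ and $B$ and delete the far $B$-particles---and (ii) invoking the path-swapping construction and \thref{lem:change.tracking}, which show that the discrepancy at the root is carried precisely by the deleted particles themselves, whose path-swapped, concatenated trajectories are still rate-$\leq 1$ random walks conditionally on the past. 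With that lemma your displacement bounds apply verbatim; without it the reduction is unjustified.

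A secondary issue is the tree bookkeeping. Only the $d^u$ vertices at distance $u$ with a directed path to $\root$ matter, and the probability that a walker from such a vertex ever hits the root is exactly $d^{-u}$ (each step toward the root has probability $1/d$, not $1/2$), so the vertex count and the hitting probability cancel exactly; the convergence of the sum over $u\geq ct$ comes entirely from the Poisson tail $e^{-(u-t)^2/u}$ on the number of jumps, with $c$ chosen large enough that this sum is at most $e^{-t}/2$. Your proposed accounting with growth $e^{Cm}$ against a per-step probability $1/2$ does not converge for $d>2$ and does not give a bound uniform in $d$; you flagged this as the delicate point, and the exact cancellation $d^u\cdot d^{-u}=1$ is how the paper handles it.
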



\begin{lemma} \thlabel{lem:torus}
  Let $\rho_t$ be the density of $A$-particles for DLAS on $\ZZ^d$ with $0 \leq \lambda_B\leq 1$ and 
  \commHL{$\lambda_{A}=1$.}
  Let $\bar \rho_t$ be the density of $A$-particles for DLAS on
  the torus $\mathbb T_{2r}^d$ at time $t$ with $\sqrt t \leq r \leq 2t$. There exist $c,C>0$ that do not depend on $t$ and $r$ such that 
	$$|\rho_t - \bar \rho_t| \le C t^{d} e^{-cr^2/ t} $$
	for all $t \geq 1$. 
\end{lemma}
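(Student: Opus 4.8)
The plan is to use the finite-range truncation result (\thref{lem:finite.range.truncation}) as a bridge between the two processes. Write $\rho_t^{(r)}$ for the density of $A$-particles at $\root$ for DLAS on $\ZZ^d$ with all particles beyond $\ell^\infty$-distance $r$ from $\root$ removed from the initial configuration, and $\bar\rho_t^{(r)}$ for the analogous quantity on the torus $\TT_{2r}^d$ with all particles beyond $\ell^\infty$-distance $r$ from $\root$ removed. The key point is that $\rho_t^{(r)}=\bar\rho_t^{(r)}$ exactly: the subset of $\ZZ^d$ consisting of sites within $\ell^\infty$-distance $r$ of the origin embeds isometrically into the torus $\TT_{2r}^d$ of side length $2r$ (no boundary identifications occur within that ball), so the two truncated DLAS processes can be coupled to be literally identical — same initial configuration, same simple-random-walk kernel restricted to the ball, same annihilation dynamics. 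Hence both truncated processes have the same law, and in particular the same density at $\root$.

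With that identity in hand, I would apply \thref{lem:finite.range.truncation} twice. On $\ZZ^d$, assuming $1\le\sqrt t\le r\le 2t$, the lemma gives $\abs{\rho_t-\rho_t^{(r)}}\le C_1 t^d e^{-C_0 r^2/t}$. On the torus $\TT_{2r}^d$, the same lemma (its second bullet covers ``the torus $\TT_{2r}^d$ and $\ZZ^d$'' uniformly) gives $\abs{\bar\rho_t-\bar\rho_t^{(r)}}\le C_1 t^d e^{-C_0 r^2/t}$ under the same constraints on $r,t$. Combining via the triangle inequality,
\begin{align*}
\abs{\rho_t-\bar\rho_t}\le \abs{\rho_t-\rho_t^{(r)}}+\abs{\bar\rho_t^{(r)}-\bar\rho_t}\le 2C_1 t^d e^{-C_0 r^2/t},
\end{align*}
which is the claimed bound with $C=2C_1$ and $c=C_0$. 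One small technical item: one must check that the hypothesis range $\sqrt t\le r\le 2t$ in the statement of \thref{lem:torus} matches the range $1\le\sqrt t\le r\le 2t$ required by \thref{lem:finite.range.truncation}, together with $t\ge 1$; this is immediate.

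The main obstacle — really the only one — is justifying the exact coupling $\rho_t^{(r)}=\bar\rho_t^{(r)}$ rigorously, i.e.\ verifying that the graphical construction of DLAS localizes correctly: once particles beyond distance $r$ are deleted, no surviving particle can ``feel'' the boundary identification of the torus within time horizon $t$, because to do so it would have to travel $\ell^\infty$-distance at least $r$ from its start and exit the embedded ball, but all such starting sites were emptied. Strictly, a particle starting near the inner boundary of the ball could wander outward, but since the comparison is between the ball-in-$\ZZ^d$ process and the ball-in-torus process (not the full processes), the dynamics inside the embedded ball are identical in both — a particle that leaves the ball simply leaves in both pictures and its future trajectory is irrelevant to $N_t$ at $\root$ (it can only come back and annihilate, which it does identically in both). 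So the coupling is exact for the density at the root, and no error term arises from this step; all the loss is quantified by the two applications of \thref{lem:finite.range.truncation}. If one prefers to avoid even discussing particles leaving the ball, one can instead invoke \thref{lem:finite.range.truncation} directly in the form it is stated — it already asserts the truncated and untruncated densities are close on both the torus and $\ZZ^d$ — and note that the truncated processes on the two graphs coincide by construction, which is the cleanest route.
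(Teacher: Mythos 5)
Your overall architecture (truncate both processes via \thref{lem:finite.range.truncation}, then compare the truncated processes, then triangle inequality) is close in spirit to the paper's, but the step you flag as ``the only obstacle'' is in fact a genuine gap: the truncated processes on $\ZZ^d$ and on $\TT_{2r}^d$ are \emph{not} equal in law, so $\rho_t^{(r)}=\bar\rho_t^{(r)}$ is false. Under the natural coupling (identify $\TT_{2r}^d$ with representatives in $(-2r,2r]^d$ and give corresponding particles the same jump sequences), the torus position of each particle is always the quotient projection of its $\ZZ^d$ position; but collisions on the torus are determined by equality of \emph{projections}, which is strictly weaker than equality in $\ZZ^d$. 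Two particles starting in the ball whose $\ZZ^d$ positions come to differ by a nonzero multiple of the period $4r$ in some coordinate will collide on the torus but not on the lattice (and, symmetrically, a particle that wraps around can re-enter the ball ``from the other side'' and annihilate a particle it would never meet in $\ZZ^d$). Your parenthetical ``it can only come back and annihilate, which it does identically in both'' is exactly where this breaks. Such wrap-around events require some particle to travel $\ell^\infty$-distance of order $r$, so they are rare — but they are not impossible, and the coupling is therefore only exact on a high-probability event, not surely.

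Once you acknowledge this event (call it $F$, as the paper does), a second issue appears that your proposal has no machinery to handle: you must bound $\abs{\E[(Z-\bar Z)\1_F]}$ where $Z,\bar Z$ are the particle \emph{counts} at the origin, which are unbounded. A bound on $\P(F)$ alone does not suffice. The paper resolves this with Cauchy--Schwarz together with a second-moment bound $\E Z^2,\E\bar Z^2\le 2$, obtained by dominating both systems (via \thref{lem:monotonicity}) by the non-interacting system with an $A$-particle at every site; this is why the final bound has the form $\sqrt{\P(F)}$ rather than $\P(F)$. To repair your argument you would need to (i) define the wrap-around/boundary-reaching event, bound its probability by the moderate-deviations estimate \thref{lem:SRW} plus a union bound over the $O(r^d)$ initial sites, and (ii) add the Cauchy--Schwarz and second-moment step to control the count difference on that event. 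With those two additions your route coincides with the paper's proof.
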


In Sections~\ref{sec:UB} and \ref{sec:EV_UB},
we will consider a variant of DLAS that we call the \emph{sequential process run for time~$t$}. 
Let $\lambda_B=0$ and fix a time $t\leq\infty$. Assume that DLAS on a given graph
has standard initial conditions restricted to a finite subset $H$ of the graph.
Place any ordering on the vertices of $H$, and let it induce an ordering on the $A$-particles according
to their initial positions.
With all other particles holding still,
let the first $A$-particle carry out its random walk up to time~$t$ or until
it hits a $B$-particle, in which case it and the $B$-particle are annihilated as usual.
Then, in this new landscape of $B$-particles, run the second $A$-particle in the same way, and continue
until all $A$-particles have gone.
Define the \textit{occupation time} of the root for this process to be
the sum of the times spent at the root by each $A$-particle.

We will show that the sequential process dominates the usual DLAS, in the sense
 that the occupation time of the root is stochastically larger in the sequential process
 than in the usual DLAS. For random variables $X$ and $Y$, we use the notation $X\preceq Y$ to denote that $X$ is \textit{stochastically dominated} by $Y$
 in the standard sense that $\P(X\leq t)\geq\P(Y\leq t)$ for all $t$, or equivalently
 that there exists a coupling of $X$ and $Y$ so that $X\leq Y$ a.s.

\begin{lemma}\thlabel{lem:sequential.process}
  Consider DLAS on a transitive unimodular graph $G$. 
  Let $H$ be a subset of the vertices of $G$ \commHL{and assume $\lambda_{B}=0$. }
  Let $V_t$ be  the total occupation time of the root
  by $A$-particles up to time $t\leq\infty$ in a DLAS with all $B$-particles present and only the $A$-particles initially in $H$ present. 
  For a given ordering of vertices in $H$,
  let $V'_t$ be the total occupation time of the root
  in the sequential process run for time~$t$. Then $V_t\preceq V'_t$.
\end{lemma}

Next, imagine dividing the particles in DLAS into two classes and running each
as a separate instance of DLAS. Intuitively, the combined particle density in the separated processes
should dominate the density in the origin DLAS, since more annihilations will occur in the original
process. 
We prove this for the $\lambda_B=0$ case.

\begin{lemma}\thlabel{lem:polarized_DLAS}
Consider DLAS on a transitive unimodular graph with given initial conditions with no more than one particle per site,
and with every particle labeled as positive or negative.
Then consider two independent DLAS, one with only the positive particles present and one with only the negative particles present. Let $V^+_t$ and $V^-_t$ be the total occupation times of the root \commHL{up to time $t$} in each of these processes. For all $0<t\leq\infty$, it holds that
	$$V_t \preceq V^+_t + V^-_t.$$
\end{lemma}

\section{Process construction and proofs of key lemmas}
\label{sec:lemmas.proofs}

{The goal of this section is to establish the lemmas from Section~\ref{sec:lemmas}. In Section~\ref{sec:tracers}, we describe the graphical construction of DLAS and a tracer system to track differences between different DLAS from \cite{vicius_DLA}. These tools help prove Lemmas \ref{lem:monotonicity}, \ref{lem:finite.range.truncation}, and \ref{lem:torus}. In Sections~\ref{sec:sequential} and \ref{subsection:polarized_construction}, we introduce two modified tracer constructions that we use to prove \thref{lem:sequential.process} and \thref{lem:polarized_DLAS}, respectively.}

{We note that \cite{vicius_DLA, parking_on_integers, Cabezas2014, rivera2019dispersion, bahl2021diffusion} developed couplings for DLAS.  As mentioned, \cite{vicius_DLA} introduced the tracer construction. A modified tracer system was later used in \cite{bahl2021diffusion}. A variant of DLAS in which $A$-particles can selectively ignore certain collisions
with $B$-particles is considered in \cite{parking_on_integers}. The paper also uses
a construction of DLAS with $\lambda_B=0$ where each site has a stack of instructions.
This perspective is a special case of a more general \commHL{Abelian} property for the equivalent particle-hole model mentioned earlier  \cite[Lemma 1]{Cabezas2014}.  A path concatenation scheme that uses Young tableaux is used in \cite{rivera2019dispersion} in a version of internal diffusion-limited aggregation, which can be thought of as a DLAS in which $A$-particles are released sequentially from a single source. Speaking to the subtlety of DLAS, all of these couplings are different from each other as well as from the modified tracer systems we introduce in Sections~\ref{sec:sequential} and \ref{subsection:polarized_construction}.
}

\subsection{Tracers and the proofs of Lemmas~\ref{lem:monotonicity}--\ref{lem:torus}}
\label{sec:tracers}
The essential idea in the proofs of \thref{lem:finite.range.truncation,lem:torus}
is that when considering the occupation time at a site, particles that begin far away from
the site have little effect on the occupation time and can be ignored.
To make this precise, we use a construction from \cite{vicius_DLA} in which
\emph{tracer particles} track the differences between two coupled DLAS with different initial
conditions.

First, we describe the graphical construction for DLAS from \cite{vicius_DLA}.
Fix $x \in G$ and assign to the $j$th particle counted by $\zeta_0(x)$ a discrete simple random walk path $W^{x,j} = (W^{x,j}_k)_{k \in \mathbb N \cup \{0\}}$ which it follows according to a rate $\lambda_A$ or $\lambda_B$ Poisson point process depending on whether $j$ is positive ($\lambda_A$) or negative ($\lambda_B$). This forms a continuous-time path $S^{x,j}$ called the \emph{putative trajectory} of the particle.
 We further assign to the $j$th particle counted by $\zeta_0(x)$ a \emph{braveness} $h^{x,j} \in [0,1]$ chosen uniformly at random.
Assume independence of $\{S^{x,j}, h^{x,j} \colon  x\in G, j\in \mathbb Z\setminus\{0\}\}$.
Particles follow their putative trajectories $S^{x,j}$. When one or more particles of opposite type occupy a site, the bravest $A$- and $B$-particles pairwise mutually annihilate
until there are no remaining pairs of opposite type particles at the site. 
It is shown in \cite[Section 6]{vicius_DLA} that for a transitive unimodular graph (such as $\BT$, $\mathbb T_{r}^d$, or $\mathbb Z^d$), this graphical model is well defined, and its particle counts at time~$t$
form a Markov process with the correct transition rates for DLAS.

The graphical construction allows us to couple DLAS with different initial configurations. 
Let $\zeta'$ be a DLAS defined using the same graphical construction as $\zeta$ but with initial conditions $\zeta_0'$.
Since we will only need to add or remove at most one particle from any given site, we will assume that $|\zeta_0'(x) - \zeta_0(x)| \leq 1$ for all $x \in G$. 
Let $\mathcal A^+ = \{ x  \colon \zeta_0'(x) - \zeta_0(x) =1\}$  and $\mathcal A^-= \{ x \colon \zeta_0'(x) - \zeta_0(x) = -1\}$. Set $\mathcal A = \mathcal A^+ \cup \mathcal A^-$.

We use a method of \emph{tracer particles} given in \cite{vicius_DLA} to track the difference
between $\zeta$ and $\zeta'$ as they evolve.
We now summarize their construction, described in more detail in \cite[Section 3.1]{vicius_DLA}.
(Though the construction is given within the section of \cite{vicius_DLA} dealing
with the case $\lambda_A=\lambda_B=1$, it works for general $\lambda_A$ and $\lambda_B$.)
To describe how tracers work, we follow \cite{vicius_DLA} and start with the case of a single
$A$-particle added at location $x$, i.e., $\zeta_0'(x)=\zeta_0(x)+1$ with $\zeta_0(x)\geq 0$.
We let $X^x=(X^x_t)_{t\geq 0}$ denote the path of the tracer particle we define now.
Initially the tracer follows this $A$-particle's path in $\zeta'$.
Now, suppose this $A$-particle is annihilated in $\zeta'$. Following the annihilation,
there is either a $B$-particle present
in $\zeta$ but not $\zeta'$ (if no other $A$-particle was present at the annihilation site),
or there remains an extra $A$-particle present in $\zeta'$ but not $\zeta$ (if another $A$-particle
was present at the annihilation site). The tracer then follows the extra $A$-particle
in $\zeta'$ or the extra $B$-particle in $\zeta$. When this particle is annihilated,
there will continue to be either an extra $A$-particle in $\zeta'$ or an extra $B$-particle in $\zeta$,
and the tracer transfers itself to this extra particle.
This process continues, and 
$\zeta_t'-\zeta_t=\delta_{X^x_t}$ for all $t$. If $\lambda_A=\lambda_B=1$, then $X^x_t$ is a random
walk in continuous time. If we only know that $\lambda_A\leq 1$ and $\lambda_B\leq 1$, then $X^x_t$
can be coupled with a rate~$1$ random walk so that in time~$t$ it visits
a subset of the sites visited by a rate-1 random walk. Though we described this construction
for the case where initially an extra $A$-particle is initially present in $\zeta'$, 
it works equally well when an extra $B$-particle is initially present in $\zeta$, and hence
in all cases where $\mathcal A^+=\{x\}$ and $\mathcal A^-=\varnothing$.
If instead $\mathcal A^+=\varnothing$ and $\mathcal A^-=\{x\}$, the same construction
yields a tracer that at all times follows an extra $B$-particle in $\zeta'$
or an extra $A$-particle in $\zeta$.

When $\Aa$ is not a singleton, the same construction applied to each element of $\Aa$
yields a collection of tracers.
Tracers originating from $\Aa^+$ are called \textit{$\oplus$-tracers}, and those originating from $\Aa^-$ are called \textit{$\ominus$-tracers}.
The only complication comes when a $\oplus$-tracer encounters a $\ominus$-tracer.
This may correspond to two extra particles of the opposite type, either both in $\zeta$ or both in $\zeta'$,
which annihilate each other. In this case the tracers are left with nothing to track,
and the two differences between $\zeta$ and $\zeta'$ that were tracked by them are no more.
It can also correspond to two extra particles of the same type, one in $\zeta$ and one in $\zeta'$,
both simultaneously annihilated by the same particle in both systems, again eliminating
two differences between $\zeta$ and $\zeta'$ and leaving the tracers nothing to track.
In these cases, we say that the two tracers are \emph{wandering} rather than \emph{active} from this point on,
but we extend their paths by independent rate-$\lambda_A$ random walks. (Note that
a $\oplus$- and $\ominus$-tracer can also encounter each other without the tracers becoming
wandering, for example when they track two extra $A$-particles, one in $\zeta$ and one in $\zeta'$,
but no $B$-particle encounters them before they move apart.)
We then have
  \begin{equation}\label{eq:diffA}
\begin{aligned}
    \zeta_t' - \zeta_t &= \sum_{\sigma \in \{+,-\}} \sum_{y \in \Aa^\sigma} \sigma \ind{X^y \text{ is active at time $t$}} \delta_{X_{t}^y}. 
\end{aligned}
\end{equation}

\begin{proof}[Proof of \thref{lem:monotonicity}]
  This lemma follows immediately from Lemma~3 in \cite{vicius_DLA}, which asserts that when
  extra $A$-particles are added to the graphical construction, the lifespans of all existing $A$-particles
  are at least as long as before, and the lifespans of all existing $B$-particles are no longer than
  before. (Lemma~3 in \cite{vicius_DLA} is itself an immediate consequence of the tracer construction and \eqref{eq:diffA}.)
\end{proof}

We are now ready to prove \thref{lem:finite.range.truncation}.
The idea is to consider DLAS  with full and truncated initial conditions and then
to bound the difference between the two systems using tracers.

\begin{proof}[Proof of \thref{lem:finite.range.truncation}]
We first provide the argument for the bidirected tree, then explain how to adapt it to the lattice and torus.  Let $B_r$ denote the ball of radius $r$ around the root in $\BT$ and let $r=\floor{ct}$ for 
a constant $c>1$ to be chosen later. 
Let $\zeta$ denote DLAS on $\BT$, and let $\zeta'$ denote the same process with particles
beyond graph distance~$r$ of the root removed from the initial conditions. Take $\zeta$ and $\zeta'$
to be coupled by the graphical construction described previously, and
for $y\in\BT\setminus B_r$, let $X^y=(X^y_u)_{u\geq 0}$ denote
the tracer path originating at $y$.

Let $\alpha_s(x)=\zeta_s(x)\1\{\zeta_s(x)>0\}$ and $\alpha_s'(x)=\zeta_s'(x)\1\{\zeta_s'(x)>0\}$,
the number of $A$-particles present at $x$ at time~$s$ in each system.
Since $\rho_s-\rho^r_s= \E[\alpha_s(\root)-\alpha'_s(\root)]$,
and $\abs{\alpha_s(\root)-\alpha'_s(\root)}$ is bounded by the number of tracers
at $\root$ at time~$s$,
  \begin{align}\label{eq:rho_diff}
    |\rho_s - \rho^r_s| \leq  \sum_{y \in \BT\setminus B_r} \P(X_{s}^y = \root ).
  \end{align}

  Now we bound this sum. Consider $y\in \BT\setminus B_r$ such that there is an oriented path in $\BT$ from $y$ to the root of length $u>r$. We claim that for such $y$ \commHL{and for all $1\le s\le t$}, 
  \begin{align}
    \P(X_s^y = \root) &\leq e^{-(u-t)^2/u}d^{-u}.
      \label{eq:BT.hitting.bound}
  \end{align}
  Indeed, as we mentioned when defining tracers,   
  since $\lambda_A,\lambda_B\leq 1$ we can couple $X^y$ with a rate~$1$ random walk $S=(S_v)_{v\geq 0}$ from
  $y$
  so that $X^y$ visits a subset of the sites traversed by $S$ up to a given time.
  For $S$ to reach $\root$ by time~$s\leq t$, it must make at least $u$ jumps by time~$s$,
  which occurs with probability at most $e^{-(u-t)^2/u}$ by \thref{lem:poisson_tail} 
  (note that $u\geq t$ since $c>1$).
  Also, $S$ must make its first $U$ jumps toward the root if it is ever to reach it, which
  has probability $d^{-u}$. Together, this proves \eqref{eq:BT.hitting.bound}.
   
  The number of $y$ such that there is an oriented path in $\BT$ from $y$ to the root of length $u$ is $d^u$ (see Figure~\ref{fig:tree}), and for $y$ such that there is no oriented path from $y$ to the root, we clearly have $\P(X_s^y = \root)=0$.
  Thus, splitting up the sum according to $u$ \commHL{and recalling $r=\lceil ct \rceil$}, it holds for $s\leq t$ that
  \begin{align}
     \sum_{y \in \BT\setminus B_r} \P(X_{s}^y = \root )
       &\leq \sum_{u=\ceil{ct}}^{\infty} e^{-(u-t)^2/u} \le e^{2t}\sum_{u=\ceil{ct}}^{\infty} e^{-u} \leq e^{-t} \label{eq:union}
  \end{align}
  for a sufficiently large choice of $c$, which gives the lemma for the bidirected tree.

  For the lattice $\mathbb{Z}^d$, let $B_r$ denote the closed $\ell^\infty$-ball of radius $r$ centered at $\root$.
  Define $\zeta$ as DLAS on $\ZZ^d$ and $\zeta'$ as the same with particles outside of $B_r$ removed
  from the initial conditions. As before,
  \begin{align}\label{eq:rho_diff2}
    \abs{\rho_s - \rho^r_s} \leq  \sum_{y \in \ZZ^d\setminus B_r} \P(X^y_s = \root ).
  \end{align}
  \commHL{The argument below holds whenever $1\le r\le 2t$, but the resulting bound will only be meaningful when $r\ge \sqrt{t}$.} 
  
  Consider $y\in \ZZ^d\setminus B_r$. As in the $\BT$ case, we bound 
  $\P(X^y_s=\root)$ by the probability of a rate~$1$ random walk $S=(s_v)_{v\geq 0}$ from $y$ hitting $\root$
  before time~$s$. Using a crude bound here, let $u=\norm{y}_\infty>r$ and choose a coordinate of $y$
  whose absolute value is $u$. The projection of $S$ onto this component is a rate~$1/d$ 1-dimensional
  random walk, and its probability of reaching $0$ by time $s\leq t$ is at most $e^{-u^2/4t}$
  by \thref{lem:SRW}. 
  Using this bound and letting  
  $b_u = |\{ x \in \mathbb Z^d \colon \|x\|_\infty = u\}| \leq C_du^{d-1}$, 
  the expected number of tracers started between distance $r+1$ and $2t$ from $\root$ that reach $\root$ by time $t$ is at most \commHL{(recall $r\le 2t$)}
\begin{equation}
\sum_{y\in B_{2t}\setminus B_r} \P(X^y_s = \root) \le \sum_{u=r+1}^{2t} b_u e^{-u^2/4t} \leq \sum_{u=r+1}^{2t} C_d u^{d-1} e^{-u^2/4t} = O(t^{d-1} e^{-r^2/4t}).  \label{eq:<2t} 
\end{equation}
The asymptotic bound is immediate when $d=1$ and for $d\geq 2$ is obtained by comparing the last sum in \eqref{eq:<2t} to the integral
\begin{align}
\int_r^{2t} u^{d-1} e^{-u^2/4t} &\leq (2t)^{d-2}	\int_{r}^{2t} u e^{-u^2/4t} du \\
			&= C' t^{d-1} \int_{r^2/4t}^{t} e^{-w} dw \leq C' t^{d-1} e^{-r^2/4t}. \label{eq:integral.compare}
\end{align}

 Now, we use \thref{lem:poisson_tail} to bound
\begin{align}
\sum_{y\in \ZZ^d\setminus B_{2t}} \P(X^y_s = \root)\leq \sum_{k=1}^\infty b_{2t+k} \exp\left(\f{-(t+k)^2}{2(2t+k)}\right) = O(t^d e^{-c't}).  \label{eq:>2t} 
\end{align}
The asymptotic claim follows from a similar approach as at \eqref{eq:integral.compare}.
Combining \eqref{eq:<2t} and \eqref{eq:>2t} with \eqref{eq:rho_diff} gives the claimed bound. A similar argument gives the analogous bound for $\mathbb T_{2r}^d$. 
\end{proof}

We now prove that DLAS on $\mathbb T_{2r}^d$ is comparable to DLAS on $\mathbb Z^d$. We start with an outline of the argument. First we couple the particle types and random walk paths at corresponding sites
of the torus and lattice.
Letting $F$ be the event that a particle started within distance $r$ of the root interacts with the boundary, we show that $\P(F)$ is small.  We then show that the density of particles at the root of the torus and of the lattice does not change much when all particles beyond distance $r$ are removed. With this removal, the processes on the torus and lattice are identical when $F^{c}$ occurs. And on the event $F$, the density of particles at the root can be easily controlled by comparing to DLAS systems with only $A$-particles present. Since $\P(F)$ is small, we obtain a good estimate on $|\rho_t - \bar \rho_t|$.

\begin{proof}[Proof of \thref{lem:torus}]

	Identify each site $\bar x \in \mathbb T_{2r}^d$ with $x \in \mathbb Z^d\cap (-2r,2r]^d$ in the canonical way that comes from viewing $\mathbb T_{2r}^d$ as a quotient space on the lattice with points in $(-2r,2r]^d$ as representatives of each equivalence class. 
  We now couple DLAS on $\ZZ^d$ and on $\mathbb T_{2r}^d$ using the graphical
  construction from \cite{vicius_DLA}. First,
  couple the initial configurations in $\mathbb T_{2r}^d$ and $\mathbb Z^d\cap (-2r,2r]^d$ to be the same. Using the natural identification of points outside of $(-2r,2r]^d$ to the equivalence class representative in $\mathbb T_{2r}^d$, couple the random walk instructions at $\bar x\in \mathbb T_{2r}^d$ and $x\in \mathbb Z^d\cap (-2r,2r]^d$ to be the same. For $x\in \mathbb Z^d\setminus (-2r,2r]^d$, let the initial configuration and instructions be generated independently. We will refer to sites in $\mathbb T_{2r}^d$ with some coordinate entry equal to $2r$ as \emph{boundary sites of $\mathbb T_{2r}^d$}.
	
	 Let $B_r = \{ \bar x \in \mathbb T_{2r}^d \colon \|x\|_{\infty} \leq r\}$. 
	Let $D_{\bar x}$ be the event that in time $t$ the {random walk instructions for the}  particle started at $\bar x \in B_r$ reaches a boundary site of $\mathbb T_{2r}^d$. 
	As $\bar x \in B_r$, the distance from $\bar x$ to a boundary site is at least $r$. Hence,  $\P(D_{\bar x})$ is bounded by the probability that a rate 1 simple random walk has displacement at least $r$ by time $t$. Each coordinate of the $d$-dimensional simple random walk on $\mathbb{Z}^{d}$ forms a rate $1/d<1$ simple random walk on $\mathbb{Z}$. Let $F=\cup_{\bar x \in B_r} D_{\bar x}$ be the event that some particle started inside of $B_r$ reaches a boundary site of the torus by time $t$. \thref{lem:SRW} and a union bound over the initial locations $\bar{x}\in B_{r}$ and $d$ coordinates give
		\begin{align}\P\left ( F \right) \le d|B_r| e^{ -r^2/\commHL{4}t} \le d(2r)^{d} e^{-r^2/\commHL{4}t}\le d(4t)^{d} e^{-r^2/\commHL{4}t}.\label{eq:torus_union_bound}	
		\end{align}
Note that \thref{lem:SRW} still applies to the rate $1/d$ random walk since the maximum of this random walk is dominated by that of a rate $1$ random walk.

Let $\rho^*_t$ and $\bar \rho ^*_t$ be the expected density of particles at $\root$ at time $t$ for the lattice and torus, respectively, with all particles initially beyond distance $r$ deleted (in the $\ell^\infty$-norm). By \thref{lem:finite.range.truncation}, \commHL{there are constants $C,c>0$} for which we have
\begin{align}
|\rho_t - \rho_t^*|, |\bar \rho_t - \bar \rho_t^*| \leq C t^d e^{-c r^2/t}. \label{eq:stars}	
\end{align}
Hence, it suffices to compare $\rho_t^*$ and $\bar \rho_t^*$. 

Let $Z$ and $\bar Z$ be the number of $A$-particles at the origin at time~$t$ 
on the lattice and torus, respectively, with the particles beyond distance $r$ deleted from the initial
configuration. We claim that 
$$Z\ind{F^c} = \bar Z \ind{F^c}$$
since, on the event $F^c$, the random walk paths are identical for corresponding particles in
the two coupled processes. It follows that
\begin{align}
|\rho_t^{*} - \bar \rho_t^{*}| 
					  &= \abs[\big]{\E[(Z- \bar Z)\ind{F}]}.
\end{align}
 The Cauchy-Schwartz inequality and the simple bound $\E[(Z-\bar Z)^2] \leq \E[Z^2] + \E[\bar Z^2]$ give  
\begin{align}
					  \abs[\big]{\E[(Z-\bar Z)\ind{F}]}&\leq \sqrt{ \E [Z^2] + \E [\bar Z ^2 ] } \sqrt{\P(F)}. \label{eq:CS}
\end{align}

It follows from \cite[Lemma~3]{vicius_DLA} that both $Z$ and $\bar Z$ are dominated by the counts of $A$-particles at the root in systems with an $A$-particle initially at every site.  
Call these dominating counts on the lattice and torus $Z'$ and $\bar Z'$, respectively. 
By symmetry of the underlying graphs, $\E [Z'] = 1 = \E[\bar Z']$ for all $t\geq 0$. Using this fact and expressing $Z'$ and $\bar Z'$ as sums of independent indicators for whether or not the particle started at each site is at $\root$ at time $t$, it is straightforward to prove that $\E [(Z')^2], \E[(\bar Z')^2] \leq 2$. In full detail, letting $p_x$ be the probability a particle started at $x$ is at $\root$ at time $t$, expanding $\E[(Z')^2]$ and $(\E[ Z'])^2$ gives
$$\E[(Z')^2 ]= 2\sum_{x,y \in \mathbb Z^d, x \neq y} p_xp_y + \sum_{x \in \mathbb Z^d} p_x \leq (\E[Z'])^2 + \E[Z'] = 2.$$
Proceeding with similar reasoning also gives that $\E[(\bar Z')^2] \leq 2.$ 

It follows from \eqref{eq:CS} that 
\begin{align}|\rho_t^{*} - \bar\rho_t^{*}| \leq 2\sqrt{\P(F)}. \label{eq:rho-barrho}\end{align} 
Using \eqref{eq:torus_union_bound}, we have $\P(F) \leq C_3 t^d e^{-c_4 r^2 /t}$ with $c_4, C_3>0$. Applying this to \eqref{eq:rho-barrho} and then using \eqref{eq:stars} with the triangle inequality  gives
$$|\rho_t - \bar\rho_t| \leq C' t^{d} e^{- c' r^2 / t}$$
for some constants $c',C'>0.$
\end{proof}

\subsection{A variation on tracers and the proof of Lemma~\ref{lem:sequential.process}} \label{sec:sequential}
 
We now give another construction of DLAS designed to relate it to the sequential
process defined in Section~\ref{sec:lemmas}. It resembles the dragged tracer construction
from \cite[Section~4.1]{vicius_DLA} but is not quite the same (see \thref{rmk:CRS.comparison}).

We assume now that $\lambda_A=1$ and $\lambda_B=0$. We will define a particle system with
three types of particles: $B$-particles, $A$-tracers, and $B$-tracers. As we will see
in \thref{prop:tracer.system}, if we view the $A$-tracers as $A$-particles and ignore
the $B$-tracers altogether, the resulting system will have the law of DLAS.

We now describe this process, which we call the \emph{$A$/$B$-tracer system}. 
At time~$0$, the system consists of possibly infinitely
many $B$-particles, no more than one per site, as well as a finite number of particles we call 
$A$-tracers, which we number $1,\ldots,n$.
The $k$th $A$-tracer is assigned a rate-$1$ random walk $S^k = (S^k_t)_{t\geq 0}$
as its \emph{putative trajectory}.
The $A$-tracers initially follow these trajectories, while $B$-particles do not move.
When an $A$-tracer jumps onto a $B$-particle, the $B$-particle is annihilated and the $A$-tracer
becomes a $B$-tracer and halts. If the $j$th $A$-tracer jumps onto the $k$th $B$-tracer
with $j<k$, then the $A$-tracer becomes a $B$-tracer and halts, while the $B$-tracer becomes an $A$-tracer
and resumes following the path $S^k$ from where it left off when it became a $B$-tracer;
if $j>k$ then no interaction occurs.

We claim that under these dynamics, there is no need to assign a braveness to each particle
because particles can never achieve a configuration where it is ambiguous which particles should react.
Under these interaction rules and our assumptions on the initial configuration, 
there is at most one $B$-particle or $B$-tracer on a site at all times.
Since $A$-tracers move in continuous time, almost surely only a single $A$-tracer jumps
onto a $B$-particle or $B$-tracer at a time. 
And while an $A$-tracer may jump onto a site that contains $A$-tracers and a $B$-tracer,
the $A$-tracers already present will have indices greater than the $B$-tracer's, and only
the newly arrived $A$-tracer may interact with the $B$-tracer.
Also, since the system contains only finitely many moving particles, there is no question
that the construction is well defined.
We record two immediate observations for later reference:
\begin{lemma}\thlabel{lem:AB.basics}
  In the $A$/$B$-tracer system:
  \begin{enumerate}[(a)]
     \item a site that does not start with a $B$-particle will never contain $B$-particles or $B$-tracers;
     \item a site that initially contained a $B$-particle still contains the $B$-particle if it has not been visited
by $A$-particles yet; otherwise it contains a $B$-tracer whose index is the lowest of all tracers
that have visited the site up to that point.\label{i:lowest.visited}
  \end{enumerate}
\end{lemma}

As we said before, the $A$/$B$-tracer system gives an alternative construction of DLAS:
\begin{proposition}\thlabel{prop:tracer.system}
  Define $\zeta_t(x)$ as the number of $A$-tracers minus the number of $B$-particles
at $x$ at time~$t$ in the particle system defined above. Then $\zeta$ is a DLAS
with $\lambda_A=1$ and $\lambda_B=0$.
\end{proposition}
\begin{proof}
  Simply observe that $\zeta_t$ is a Markov process with the same dynamics as DLAS.
\end{proof}

\begin{remark}\thlabel{rmk:CRS.comparison}
  The dragged tracer construction of \cite[Section~4.1]{vicius_DLA} is like the system of tracers (also
  from \cite{vicius_DLA}) described in Section~\ref{sec:tracers}, except that tracers have their own
  prespecified paths, rather than simply following the prespecified paths of the particles
  they are tracing. The $A$/$B$-tracer system resembles the dragged tracer construction that
  would result from adding a collection of $A$-particles to a DLAS that has only $B$-particles.
  But the two constructions differ in that in our system, an $A$-tracer can interact with a $B$-tracer,
  whereas two $\oplus$-tracers in \cite{vicius_DLA} never interact.
  
  The $A$/$B$-tracer system could also be defined when $\lambda_B>0$.
  We do not do so here because we do not need it, and the possibility of a tracer jumping
  onto a site containing multiple particles and tracers of the opposite type adds
  some technical difficulty. A closely related construction appears in \cite{bahl2021diffusion}.
  The system defined there has exactly two tracers and also $A$-particles that interact
  with the tracers by similar rules. It still has $\lambda_B=0$, but it allows multiple $B$-particles 
  to start on the same site and for
  two $A$-tracers to jump on a site simultaneously in discrete time.
    
  We defined $\zeta$ in terms of the $A$/$B$-tracer system by having it count $A$-tracers
  as $A$-particles and ignore $B$-tracers. If we instead define $\zeta^k$ to count tracers
  $1,\ldots,k$ as $A$-particles when in state~$A$, to count tracers $k+1,\ldots,n$
  as $B$-particles when in state~$B$, and to ignore the tracers in other states, then $\zeta^k$
  is a DLAS that initially has $k$ $A$-particles. This gives a coupling of the sequence
  of systems resulting from successively adding $A$-particle one at a time.
  We do not need this for this paper, though it is used in \cite{bahl2021diffusion}.
\end{remark}

We now modify the $A$/$B$-tracer system by killing each tracer particle when its elapsed time
spent in state~$A$ reaches $t$. We call the resulting system the \emph{$t$-killed $A$/$B$-tracer system}.
In more detail,   
at a given time $s$, each tracer particle in the $A$/$B$-tracer system has had a finite number
of interactions that cause it to switch from an $A$-tracer to a $B$-tracer or vice versa. 
Let $L_k(s)$ denote the
combined length of the time periods up to $s$ that the $k$th tracer has spent as an $A$-tracer,
so that its position at time~$s$ is $S^k_{L_k(s)}$. At time $U_k:=\inf\{s\colon L_k(s)\geq t\}$,
the $k$th tracer is killed and hence removed from the system. Observe that the $k$th tracer may enter state~$B$ 
and never leave it before spending time~$t$ in state~$A$, in which case $U_k=\infty$.
We define $T_k=t$ if $U_k<\infty$ and $T_k=\max\{L_k(s)\colon s\geq 0\}$ if $U_k=\infty$.
Thus the $k$th tracer traces out the path $(S^k_s)_{0\leq s \leq T_k}$ over the lifetime
of the process, pausing whenever it is in state~$B$.
If $t=\infty$, then we define $U_k=\infty$ and $T_k=\sup\{L_k(s)\colon s\ge 0\}$, with $T_k=\infty$
if $L_k(s)$ is unbounded.
The resulting process is the original $A$/$B$-tracer system and as in the $t<\infty$
case, the $k$th tracer follows the path $(S^k_s)_{0\leq s \leq T_k}$ over its lifetime.

\begin{proof}[Proof of \thref{lem:sequential.process}]
  Consider the sequential process run for time~$t\leq\infty$.
  For now, take $H$ to be finite and
  let $n$ be the number of $A$-particles in the set $H$.
  For $k\in\{1,\ldots,n\}$, let $S^k=(S^k_s)_{s\geq 0}$ denote the random walk followed
  by the $k$th $A$-particle.
  Let $T'_k$ be the length of time spent walking by the $k$th $A$-particle in the sequential process
  before annihilation, with $T'_k=t$ if it is never annihilated.
  Thus the path of the $k$th particle in the sequential process is $(S^k_s)_{0\leq s\leq T'_k}$.

  Now, we consider the $t$-killed $A$/$B$-tracer system with initial conditions 
  corresponding to the sequential
  process, that is, with $A$-tracers initially at locations in $H$ containing
  $A$-particles and with $B$-particles initially at all sites in $G$ with $B$-particles.
  We give the tracers the same ordering as in the sequential system, and we couple
  the sequential process with the $t$-killed $A$/$B$-tracer system by using 
  the same collection of random walks $\{S^1,\ldots,S^n\}$ for both processes.
  Recall that $T_k$ was defined so that $(S^k_s)_{0\leq s\leq T_k}$ is the path traced
  out by the $k$th tracer in this system.
    
  We claim that $T_k=T'_k$ for all $k$, and hence that the paths traced out in the sequential process
  and $t$-killed $A$/$B$-tracer systems are the same. 
  The rest of the lemma will follow easily once this claim is proved, since the original and $t$-killed
  $A$/$B$-tracer systems do not differ until at least time~$t$.
  
  We prove the claim by induction on $k$.
  To prove that $T_1=T'_1$, we observe that the first $A$-tracer enters state~$B$ on colliding
  with any $B$-particle or $B$-tracer. Since every site that starts with a $B$-particle
  contains a $B$-particle or a $B$-tracer at all times, the first $A$-tracer becomes
  a $B$-tracer exactly when it visits a site that originally contained a $B$-particle.
  Since it has the lowest index, it can never turn back to an $A$-tracer. Thus $T_1$
  is the first time that $S^1$ visits a site that originally contained a $B$-particle,
  or $T_1=t$ if $S^1$ does not visit such a site by time~$t$. This is exactly the
  description of $T'_1$ as well, proving that $T_1=T'_1$.
  
  Now, suppose that $T_j=T'_j$ for $j=1,\ldots,k-1$, and we will show that $T_k=T'_k$.
  First, observe that $T_k$ and $T'_k$ must either be jump times of the walk $S^k$
  or be equal to $t$. For $T_k$, this is because if $T_k<t$, then the $k$th tracer
  entered state~$B$ at some time $u$ satisfying $T_k=L_k(u)$ and never reentered state~$A$.
  Since an $A$-tracer enters state~$B$ only upon jumping, $T_k$ is a jump time of $S^k$.
  Similarly, for $T'_k$ it is because a particle in the sequential process
  is annihilated only at a jump time of $S^k$.
  
  Thus, to show $T_k=T'_k$
  it will suffice to show that for jump times $r$ of $S^k$ satisfying $r<t$, we have
  $T_k=r$ if and only if $T'_k=r$.
  We can also assume that the first visit by $S^k$ to $S^k_r$ occurs at time~$r$,
  since in the $t$-killed $A$/$B$-process the $k$th $A$-tracer can
  only enter state~$B$ on its first visit to a site, and in the sequential process
  an $A$-particle can only be annihilated on its first visit to a site.
  For such a jump time $r<t$ with $x:=S^k_r$, we claim that $T_k=r$ if and only if 
  the following conditions hold:
  \begin{enumerate}[(i)]
    \item $T_k\geq r$; \label{i:tger}
    \item there is initially a $B$-particle at $x$ in the $t$-killed $A$/$B$-tracer system; and \label{i:tinit}
    \item for $j\in\{1,\ldots,k-1\}$, the path $(S^j_u)_{0\leq u\leq T_j}$
      does not contain $x$.\label{i:tlowers}
  \end{enumerate}
  Suppose the above conditions hold. Since $T_k\geq r$, the $k$th tracer survives long enough
  to jump to site~$x$ at some time~$s$ with $r=L_k(s)$. Since there is initially a $B$-particle at $x$
  and the tracers of index smaller than $k$ do not visit $x$ by \ref{i:tlowers}, 
  there remains a $B$-particle or $B$-tracer of index greater than $k$ by \thref{lem:AB.basics}\ref{i:lowest.visited} that the $k$th tracer will
  jump onto. The $k$th tracer then enters state~$B$ and never leaves,
  since no $B$-particle or $B$-tracer of index smaller than $k$ visits $x$. Hence $T_k=r$.
  Conversely, suppose $T_k=r$. Then the $k$th tracer jumps onto site $x$ at some time~$s$ with $r=L_k(s)$,
  enters state~$B$, and never leaves. Clearly \ref{i:tger} holds.
  By \thref{lem:AB.basics}\ref{i:lowest.visited}, condition~\ref{i:tinit} holds, and no tracer of index
  less than $k$ visits $x$ before time~$s$. And since the $k$th tracer never leaves state~$B$,
  no tracer of index less than $k$ visits $x$ after time~$s$ either, showing that \ref{i:tlowers}
  holds.
  
  Next, observe that for a jump time $r$ of $S^k$ with $x:=S^k_r$ and with $S^k$ assumed
  not to visit $x$ until time $r$, we have $T'_k=r$ if and only if 
  the following conditions hold for the $k$th $A$-particle in the sequential process:
  \begin{enumerate}[(i')]
    \item $T'_k\geq r$, that is, the particle has not been annihilated before time~$r$;  \label{i:sger}
    \item there is initially a $B$-particle at $x$ in the sequential process; and \label{i:sinit}
    \item none of the first $k-1$ $A$-particles in the sequential process visited $x$,
      so for $j\in\{1,\ldots,k-1\}$ the path $(S^j_u)_{0\leq u\leq T_j'}$  does not contain $x$.
      \label{i:slowers}
  \end{enumerate}
  By our coupling, \ref{i:tinit} holds if and only if \ref{i:sinit} does. By the inductive hypothesis,
  \ref{i:tlowers} holds if and only if \ref{i:slowers} does.
  And by doing induction on $r$ starting with the earliest jump time, we can assume
  that \ref{i:tger} holds if and only if \ref{i:sger} holds.
  This proves that $T_k=T'_k$, advancing the induction and proving that $T_k=T'_k$ for all $k$.
  
  Let \commHL{$V_s^{(t)}$} be the occupation time of the root by $A$-tracers up to time~$s$ in the $t$-killed $A$/$B$-tracer
  system. Then
  \begin{align*}
    V_t^{(t)} &= \sum_{k=1}^n \int_0^t\1\{\text{the $k$th \commHL{$t$-killed} tracer is in state~$A$ at $\root$ at time~$s$}\}\,ds\\
        &\leq \lim_{s\to\infty}V_{s}^{(t)} = \sum_{k=1}^n \int_0^{T_k}\1\{S^k_s=\root\}\,ds.
  \end{align*}
  Since the $t$-killed $A$/$B$-tracer system matches the $A$/$B$-tracer system at least up to time~$t$,
  by \thref{prop:tracer.system} the random variable $V_t^{(t)}$ is distributed as the occupation time  $V_{t}$ of the root
  in DLAS with initial conditions as in the statement of the lemma.
  Let $V'_t$ be the occupation time of the root in the sequential process, i.e.,
  \begin{align*}
    V'_t = \sum_{k=1}^n \int_0^{T'_k}\1\{S^k_s=\root\}\,ds.
  \end{align*}
  Since $T_k=T'_k$ for all $k$, this proves that $V_t\commHL{\preceq} V'_t$.
  
  Now, suppose that $H$ is infinite. Let $H_n$ consist of the first $n$ vertices in $H$
  in the given ordering, and let $V_{t,n}$ and $V'_{t,n}$ be the occupation times of the root
  in DLAS and the sequential process, respectively, when $A$-particles outside of
  $H_n$ are removed from the initial configuration.
  By the case of this lemma already proven, $V_{t,n}\preceq V'_{t,n}$.
  When $V_{t,n}$ is defined in terms of the graphical construction from \cite{vicius_DLA},
  it increases in $n$ by \cite[Lemma~3]{vicius_DLA} and converges almost surely
  to $V_t$. By definition of the sequential process, $V'_{t,n}$ converges upwards to $V'_t$.
  Thus $V_{t,n}\to V_t$ and $V'_{t,n}\to V'_t$ in law as $n\to\infty$ and $V_{t,n}\preceq V'_{t,n}$
  for all $n$, which together prove that $V_t\preceq V'_t$.
\end{proof}

\subsection{Polarized construction of DLAS and proof of Lemma~\ref{lem:polarized_DLAS}}
\label{subsection:polarized_construction}

We give one last construction of DLAS that we call the \emph{polarized system}.
As in the previous section's construction, the system has tracers that can be either in state~$A$
or state~$B$---we call them $A$-tracers or $B$-tracers depending on their current state---as 
well as $B$-particles. 
We assume that there are only finitely many tracers in the initial configuration.
Each tracer starts in state~$A$ and is given
a rate-1 random walk as its putative trajectory, and it follows it while in state~$A$.
The $B$-tracers and $B$-particles are immobile.

Each tracer and $B$-particle in the polarized system is assigned a \emph{polarity}, 
either \emph{positive} or \emph{negative}, as part of the initial configuration.
In the following interaction rules, these polarities play a role similar to the indices 
in the $A$/$B$-tracer system:

\begin{enumerate}[label = (\alph*)]
  \item If an $A$-tracer jumps onto a $B$-particle, then the $B$-particle is annihilated and 
    the $A$-tracer enters state~$B$\commHL{, regardless of their  polarities.} (The same occurs in the $A$/$B$-tracer system.)  
    \label{i:polara} 
  \item If an $A$-tracer jumps onto a $B$-tracer of opposite polarity at site~$v$, then 
    whichever tracer matches the polarity of the $B$-particle initially at $v$ enters (or remains in) 
    state~$B$. The other tracer enters (or remains in) state~$A$ and continues along its putative trajectory.
    \label{i:polarb}
  \item If an $A$-tracer jumps onto a $B$-tracer of the same polarity at site~$v$, then
    whichever tracer's putative trajectory contains $v$ at the earliest time enters (or remains in) state~$B$.
    The other tracer enters (or remains in) state~$A$ and continues along its putative trajectory.
    \label{i:polarc}
\end{enumerate}
As with the $A$/$B$-tracer system, under these rules there is exactly one $B$-particle or $B$-tracer
at all times on each site that initially holds a $B$-particle. When an $A$-tracer jumps onto a site with
a $B$-tracer, the site  contains no other $A$-tracers or it contains $A$-tracers that do
not interact with the $B$-tracer, and there is no ambiguity about which $A$-tracer can
interact with the $B$-tracer. By our assumption of having only finitely many tracers,
the system is a Markov chain on a countable state space and there is no question
as to its existence.

Now, we define the
\emph{positive} and \emph{negative DLAS} via the graphical
construction from \cite{vicius_DLA} and then couple these DLAS with the polarized system.
We define the \emph{positive DLAS} as follows.
For every site in the polarized system that starts with a positive $A$-tracer, the positive DLAS
starts with an $A$-particle. For every site in the polarized system that starts with a positive $B$-particle,
the positive DLAS starts with a $B$-particle. At all other sites the positive DLAS initially has
no particles. Each $A$-particle in the positive DLAS is given the same putative trajectory
as the corresponding $A$-tracer in the polarized system. Since $\lambda_A=1$ and $\lambda_B=0$ for this
DLAS and we start with at most one $B$-particle per site, we do not need to assign a braveness
to our particles. The \emph{negative DLAS} is defined the same way, but its initial configuration
matches up with the negative particles in the polarized system.
The positive and the negative DLAS are then two DLAS, independent conditional on their initial configurations,
both coupled with the polarized system.

\begin{figure}
	\begin{center}
		\includegraphics[width = 1 \textwidth]{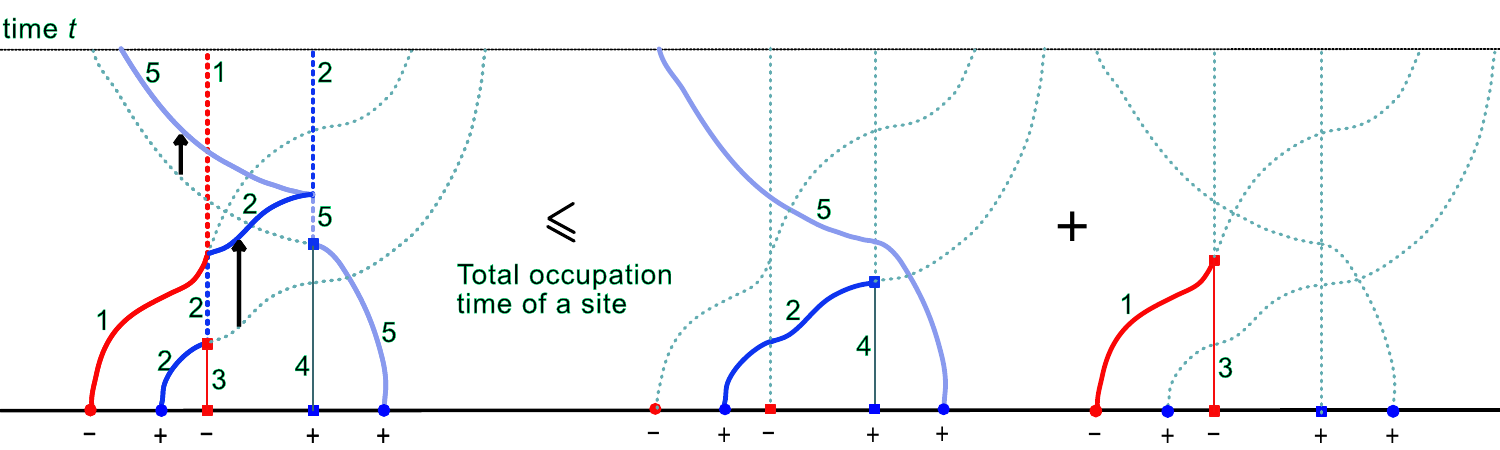}
	\end{center}
	\caption{A realization of the polarized system (left) with three initial $A$-tracers and two $B$-particles, and the corresponding behavior of the positive (middle) and negative (right) DLAS processes. 
 Dots and squares represent initial $A$-tracers and $B$-particles, respectively. In the left system, solid curves are trajectories of $A$-particles, solid vertical lines trajectories of $B$-particles, dotted curves are putative trajectories of $A$-tracers, and dotted vertical lines are trajectories of $B$-tracers. There are five trajectories in the left system and they are marked by numbers from 1 through 5. In the middle and the right systems, solid curves and vertical lines represent trajectories of $A$- and $B$-particles, respectively. 
      Blue and red indicate that the original source of the path was the putative trajectory of a 
  positive or negative particle, respectively. Two different shades of blue are used to
  disambiguate the two putative trajectories of positive $A$-tracers.
  Green dotted curves indicate portions of putative trajectories not used in their respective
  system (but in the polarized system on left some are used after a time shift, indicated by black upward arrows).
 At each time, every putative trajectory is traversed no further in the polarized system (left) than in each of the positive (middle) and negative (right) DLAS, so the occupation time of the origin by a given time $t$ in the polarized system is at most the sum in the corresponding positive and negative  DLAS.  
	}
	\label{fig:DLAS_polarized_pf}
\end{figure}

Our first claim is that if we ignore $B$-tracers in the polarized system, 
we obtain DLAS with the initial conditions given by the $A$-tracers
and $B$-particles (recall that in the polarized system all tracers
start in state~$A$).
 The statement and proof are essentially
the same as for \thref{prop:tracer.system}.
\begin{proposition} \thlabel{prop:path-lifting.valid} 
Let $\zeta_t(v)$ be the count of $A$-tracers minus the count
of $B$-particles on site~$v$ at time~$t$ in the polarized system. 
Then $\zeta_t$ is DLAS with $\lambda_A=1$ and $\lambda_B=0$.
\end{proposition}
\begin{proof}
  When we are blinded to the polarity of the particles and to $B$-tracers, the interaction
  between $A$- and $B$-tracers has no effect on $\zeta_t$ except replacing a particle's future random walk trajectory
  with an independent one.
  And when an $A$-tracer moves onto a $B$-particle, the $A$-tracer
  enters state~$B$ and the $B$-particle is destroyed, and neither is counted
  anymore by $\zeta_t$.  
  As in \thref{prop:tracer.system}, the process $\zeta_t$
  is then a Markov chain with the same interaction rules as DLAS
  with $\lambda_A=1$ and $\lambda_B=0$.
\end{proof}

\begin{proof}[Proof of \thref{lem:polarized_DLAS}]
  First we suppose that the given DLAS contains only finitely many $A$-particles.
  Form a polarized system by assigning polarities according to the signs of particles in the given DLAS. 
  This polarized system is coupled with the positive and negative DLAS, made up
  respectively of the positive and negative particles only from the given DLAS.
  By \thref{prop:path-lifting.valid}, the polarized system provides a construction
  of the given DLAS. Thus we obtain a coupling of the
  given DLAS together with the two DLAS, which are independent conditional on their initial configurations,
  consisting of the negative and positive particles.

  Let $S^v=(S^v_t)_{t\geq 0}$ denote the putative trajectory of the $A$-tracer
  initially at $v$, for all sites~$v$ initially containing $A$-tracers in the polarized
  system. With similar notation as in the $A$/$B$-tracer system, we define
  $L_v(t)$ as the combined time spent by the tracer initially at $v$ in state~$A$
  up to time~$t$, so that its position at time~$t$ is $S^v_{L_v(t)}$.
  
  Suppose that $v$ initially contains an $A$-tracer in the polarized system
  (or equivalently that $v$ initially contains an $A$-particle in the given DLAS).
  Let $T_v\leq\infty$ denote the time of annihilation of the $A$-particle in the
  corresponding negative or positive DLAS, depending on the tracer's polarity.
  We will argue that $L_v(t)\leq T_v$ for all $t\geq 0$, that is, a tracer in the polarized
  process never travels farther along its putative trajectory than the corresponding
  $A$-particle does in the positive  or negative DLAS.
  
  To prove this claim, we must show that if the particle starting
  at $v$ in the positive or negative DLAS is annihilated upon visiting $u$, then
  in the polarized process the tracer starting at $v$ will permanently enter state~$B$
  upon visiting $u$. The idea is that interaction rules \ref{i:polarb} and \ref{i:polarc} of
  the polarized process ensure that this tracer has priority over all other tracers to be in
  state~$B$.
  To make this argument precise, we consider the first time that it fails and derive
  a contradiction.  That is,
  suppose that our claim is false, and let $t$ be the infimum of all times $s$ such that
  there exists a vertex $v$ with $L_v(s)> T_v$.
  Since the system has only finitely many $A$-particles and $T_v>0$ for all $v$, we have $t>0$.
  Let $v$ be a site such that $L_v(t+\epsilon)>T_v$ for all $\epsilon>0$, and let $u=S^v_{T_v}$.
  As a shorthand, we write $\aaa$ to refer to the tracer in the polarized process
  initially at site~$v$.
  For the sake of exposition assume that $\aaa$ is positive.
  We write $\aaa^+$ as a shorthand for the particle in the positive DLAS initially at $v$,
  which has the same putative trajectory as $\aaa$.
  To summarize the set-up, particle~$\aaa^+$ is annihilated upon jumping onto
  site~$u$ at time~$T_v$, but tracer~$\aaa$ is on $u$ at time~$t$ with $L_v(t)=T_v$
  in state~$A$ and is about to continue on its putative trajectory past the point that $\aaa^+$
  was annihilated.
  Our job is to find a contradiction.
  
  When $\aaa$ jumps onto site~$u$, at some time $t'\leq t$, we claim that it enters state~$B$.
  Since $\aaa^+$ is annihilated at $u$, we know that initially there is a positive $B$-particle
  at $u$ in the positive DLAS and in the polarized process. By the dynamics of the system,
  there is a $B$-particle or $B$-tracer at $u$ at all times in the polarized process.
  If the particle on $u$ at time~$t'$ is a $B$-particle, then $\aaa$ enters state~$B$.
  If the particle on $u$ at time~$t'$ is a negative $B$-tracer, then $\aaa$ enters state~$A$ by the rules of the system.
  And the particle on $u$ cannot be a positive $B$-tracer: If it were, then some other positive tracer
  particle has visited $u$ before time~$t'$. By our minimality assumption for $t$,
  this particle traverses no more of its putative trajectory than its corresponding $A$-particle
  in the positive DLAS. But this is a contradiction, since then this $A$-particle in the positive DLAS 
  visits $u$ before time~$t'$, which would mean that $\aaa^+$ was not the first $A$-particle
  in the positive DLAS to visit $u$.
  
  Now, we argue that after entering state~$B$ at time~$t'$, particle $\aaa$ never returns
  to state~$A$. By the dynamics of the polarized process, it can only return to state~$A$ if
  it is visited by
  another positive $A$-tracer whose putative trajectory contains $u$ earlier than time $L_v(t)=T_v\leq t$.
  But again, by our minimality assumption for $t$, this would imply that some $A$-particle in
  the positive DLAS traverses this putative trajectory up to this visit to $u$, which contradicts
  $\aaa^+$ being the first $A$-particle in the positive DLAS to visit~$u$.
  
  Thus we have shown that $\aaa$ enters state~$B$ at some time $t'\leq t$ and never returns to state~$A$.
  This contradicts our assumption that $\aaa$ is in state~$A$ at time~$t$, completing our
  proof that $L_t(v)\leq T_v$ for all $v$.
  
  The lemma itself now follows easily. Let $v$ be a site initially containing
  an $A$-tracer in the polarized process. Since we have shown that $L_t(v)\leq T_v$
  and clearly $L_t(v)\leq t$, a tracer in the polarized process
  traverses at most length $\min(t, T_v)$ of its putative trajectory in time~$t$.
  On the other hand, the $A$-particle initially at $v$ in the positive or negative DLAS
  traverses exactly length $\max(t, T_v)$ of its putative trajectory in time~$t$.
  Thus, the time spent at the root by any $A$-tracer in the polarized process up to time~$t$
  is at most the time spent by the corresponding $A$-particle in the negative or positive DLAS,
  establishing that $V_t\leq V_t^+ + V_t^-$ under this coupling.
  
  Finally, we extend the theorem to initial configurations with infinitely many $A$-particles
  by a similar argument as in the proof of \thref{lem:sequential.process}.
  Enumerate the vertices of the graph in any way, and let $V_{t,k}$ denote the occupation
  time of the root up to time~$t$ in the given DLAS 
  with all $A$-particles beyond the first $k$ vertices of the graph
  removed from the initial configuration. Let $V^-_{t,k}$ and $V^+_{t,k}$ be the analogous
  quantities for the negative and positive DLAS. By monotonicity of the graphical
  construction \cite[Lemma~3]{vicius_DLA}, these random variables converge in law to $V_t$, $V^-_t$,
  and $V^+_t$, respectively, as $k\to\infty$. 
  By the case of the lemma already proven, $V_{t,k}\preceq V_{t,k}^-+V_{t,k}^+$, which together
  with the convergence proves that $V_t\preceq V_t^-+V_t^+$. And last, a similar limit argument
  proves the lemma in the $t=\infty$ case.  
\end{proof}

\section{Proof of main results on the integer lattice}
\label{sec:lattice}
In this section we prove our main results for DLAS on the integer lattice, which are stated in Theorems \ref{thm:LB}, \ref{thm:UB}, \ref{thm:EV_LB}, and \ref{thm:EV_UB}.
{As defined in Section~\ref{sec:notation}, we use $D(H_0)$ to denote the number of
$A$-particles minus the number of $B$-particles 
on a finite subgraph $H_0$ in the initial configuration of a given instance of DLAS.}

\subsection{A lower bound on \texorpdfstring{$\rho_t$}{rho\_t} for DLAS on \texorpdfstring{$\mathbb{Z}^{d}$}{Z\textasciicircum d}} \label{sec:LB}

We first derive \thref{thm:LB} from Lemma~\ref{lem:torus}.

\begin{proof}[Proof of Theorem~\ref{thm:LB}.]
	
	Let $r = \lceil C \sqrt{ t \log t} \rceil$, and let $\bar \rho_t$ be the density of $A$-particles at time~$t$ on $\TT_r^d$ as in \thref{lem:torus}. 
	By this lemma, we may choose $C$ large enough so that
	$$|\rho_t - \bar \rho_t| \lesssim (t \log t)^{-d/4}.$$ 
It then suffices to show that
	\begin{align}
		\bar \rho_t \gtrsim (t \log t)^{-d/4}. \label{eq:rho'}
	\end{align}

	Let $D = D(\TT_r^d)$ denote the difference between the initial number of $A$- and $B$-particles on $\mathbb T_r^d$.
  We can express $D$ as the sum of $|T_r^d|$ i.i.d.\ random variables taking value in $\{-1,1\}$ with mean 0. 
The central limit theorem and the fact that $| \mathbb{T}_{r}^{d}| \asymp (t \log t)^{d/2}$ guarantee that there is a constant $c>0$ so that
	\begin{align}
	\inf_{t\ge 0} \; \P\left( D > (t \log t)^{d/4}\right) > c. \label{eq:clb}
	\end{align}
Notice that on the event $D>(t \log t)^{d/4}$, there are at least $(t\log t)^{d/4}$ $A$-particles
on the torus that survive forever. Hence the expected number of $A$-particles
in the system at time~$t$ is at least $c(t\log t)^{d/4}$. By translation invariance,
\begin{align*}
  \bar\rho_t &\geq \frac{c(t\log t)^{d/4}}{\abs{\mathbb{T}_R^d}}\geq c'(t\log t)^{-d/4}.
\end{align*}
This completes the proof.
\end{proof}

\subsection{A lower bound on the critical exponent in low dimension} \label{sec:EV_LB}

The idea for the proof of Theorem~\ref{thm:EV_LB} was shared with us by Michael Damron. To give a lower bound on the critical exponent for $\mathbb{E}_p V$, we replicate the proof of Theorem~\ref{thm:LB} for $p<1/2$ using a more refined estimate in
place of the central limit theorem. 

\begin{lemma}\thlabel{lem:devr}
	Let $D = D(\TT_r^d)$, the discrepancy of the initial configuration of DLAS on the torus.
   Then there exists an absolute constant $c_{1}>0$ so that the following implication holds
   for all $1/4\leq p<1/2$:  
	\begin{equation}\label{eq: CE_1}
	2r \le  \biggl( \frac{c_1}{1-2p} \biggr)^{2/d}  \quad \Longrightarrow \quad \mathbb{P}_p\bigl(D \geq c_1(2r)^{d/2}\bigr) \geq c_1.
	\end{equation}
\end{lemma}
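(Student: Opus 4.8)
The plan is to observe that $D=D(\mathbb B_r^d)$ is nothing but a sum of $n:=\abs{\mathbb B_r^d}$ independent $\pm1$ random variables $X_x$, one per site $x\in\mathbb B_r^d$, with $\P_p(X_x=1)=p$, and that $r^d\le n\le 3^d r^d$ for $r\ge 1$ (the only regime of interest). Raising the hypothesis to the power $d/2$ turns it into $(1-2p)\,r^{d/2}\le c_1$, i.e.\ it says that the downward drift $\E_p D=-(1-2p)n$ has magnitude at most $O(c_1)\cdot r^{d/2}$, which is negligible next to the fluctuation scale $\sqrt n\asymp r^{d/2}$. So the assertion is a Gaussian-window anticoncentration statement: a sum of bounded i.i.d.\ terms whose mean lies within $O(c_1\sqrt n)$ of $0$ and whose variance is $\Theta(n)$ exceeds $c_1\sqrt n$ with probability bounded below by an absolute constant.

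Before anything else I would dispose of small $p$: if $p<1/4$ then $1-2p>1/2$, so as long as $c_1\le 1/2$ we have $(c_1/(1-2p))^{2/d}<1\le r$ and the hypothesis of the implication fails, making it vacuously true. Thus it suffices to prove the lemma for $p\in[1/4,1/2)$, and the payoff of this reduction is that on this range $\mathrm{Var}(X_x)=4p(1-p)\in[3/4,1]$ and $\E_p\abs{X_x-\E_p X_x}^4\le 16$, so every moment entering the argument is pinned between \emph{absolute} constants, uniformly in $p$.

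The main step is an anticoncentration bound for the centered sum $D':=D-\E_pD=\sum_x(X_x-\E_pX_x)$. Expanding fourth moments of a mean-zero i.i.d.\ sum gives $\E_p[(D')^2]=4p(1-p)n$ and $\E_p[(D')^4]=n\,\E_p\xi^4+3n(n-1)(\E_p\xi^2)^2\le 19n^2$, where $\xi=X_x-\E_pX_x$. Hölder's inequality in the form $\lVert D'\rVert_2\le\lVert D'\rVert_1^{1/3}\lVert D'\rVert_4^{2/3}$ then yields $\E_p\abs{D'}\ge(\E_p[(D')^2])^{3/2}/(\E_p[(D')^4])^{1/2}\ge c_0\sqrt n$ for an absolute $c_0>0$; since $\E_pD'=0$ this gives $\E_p[(D')^+]=\tfrac12\E_p\abs{D'}\ge\tfrac{c_0}{2}\sqrt n$. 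Splitting $\E_p[(D')^+]$ at level $t\sqrt n$ and bounding the tail piece by Cauchy--Schwarz gives $\tfrac{c_0}{2}\sqrt n\le t\sqrt n+\sqrt n\,\P_p(D'\ge t\sqrt n)^{1/2}$, so choosing $t=c_0/4$ produces absolute constants $\eta:=c_0/4$ and $\delta:=c_0^2/16$ with $\P_p(D'\ge\eta\sqrt n)\ge\delta$ for all $p\in[1/4,1/2)$.

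Finally I would combine the pieces. On the event $\{D'\ge\eta\sqrt n\}$, using $\sqrt n\ge r^{d/2}$, $n\le 3^dr^d$, and the hypothesis $(1-2p)r^{d/2}\le c_1$,
\[
  D=D'-(1-2p)n\ \ge\ \eta\,r^{d/2}-3^d(1-2p)r^{d}\ =\ \eta\,r^{d/2}-3^d\bigl((1-2p)r^{d/2}\bigr)r^{d/2}\ \ge\ (\eta-3^dc_1)\,r^{d/2}.
\]
Choosing $c_1\le\eta/(2\cdot 3^d)$ makes the right side at least $\tfrac{\eta}{2}r^{d/2}\ge c_1 r^{d/2}$, so $\P_p(D\ge c_1r^{d/2})\ge\P_p(D'\ge\eta\sqrt n)\ge\delta$; taking also $c_1\le\delta$ finishes it. The lemma then holds with $c_1=\min\{\tfrac12,\ \eta/(2\cdot 3^d),\ \delta\}$, which is absolute since $d\le 3$. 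I expect the only genuine subtlety to be the uniformity in $p$: one must check that the anticoncentration constants $\eta,\delta$ neither degrade as $p\uparrow 1/2$ (they do not, since the variance only increases toward $n$) nor as $p$ shrinks (handled by the vacuity reduction, which confines us to $p\ge 1/4$ where the relevant moments sit between absolute constants). Everything else is routine second- and fourth-moment bookkeeping — in particular no Berry--Esseen bound and no separate treatment of small $r$ are needed.
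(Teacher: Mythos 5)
Your proof is correct and follows essentially the same route as the paper: both center $D$, use second and fourth moments to show $\E[(D-\E_p D)^+]\gtrsim\sqrt{n}$, convert that into a constant-probability lower bound on the event $\{D-\E_p D\gtrsim\sqrt n\}$, and then absorb the drift $(1-2p)n$ using the hypothesis on $r$. The only differences are cosmetic — you use H\"older interpolation plus a truncation/Cauchy--Schwarz step where the paper applies the Paley--Zygmund inequality twice, and you make explicit the vacuity reduction to $p\in[1/4,1/2)$ that the paper handles by simply restricting its computations to that range.
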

\begin{proof}
  Let $n=(2r)^d$, the volume of $\TT_r^d$.
  The random variable $D$ is the sum of $n$ independent random variables
  taking values $1$ and $-1$ with probability $p$ and $1-p$, respectively.
  Thus $\E D = -n(1-2p)$ and $\sigma^2 := \Var D = 4np(1-p)$.
  The idea of the proof is that the condition $2r\leq\bigl(c/(1-2p)\bigr)^{2/d}$
  ensures that $D$ has mean and standard deviation on the order of $\sqrt{n}=(2r)^{d/2}$,
  and hence $\P_p(D\geq c(2r)^{d/2})$ is bounded from below.
  We make this precise now using a quantitative CLT.
  
  Let $c>0$ be a small constant to be determined later.
  Suppose that $2r\leq\bigl(c/(1-2p)\bigr)^{2/d}$.
  It follows that $1-2p\leq c/\sqrt{n}$. From our assumption that $1/4\leq p<1/2$,
  we have $\sigma^2\geq 3n/4$. Thus
  \begin{align*}
    \E D + \frac{4c\sigma}{\sqrt{3}} \geq c\sqrt{n}=c(2r)^{d/2}.
  \end{align*}
  With $Z$ a standard normal random variable, we obtain
  \begin{align*}
    \P_p\bigl( D\geq c(2r)^{d/2}\bigr) \geq \P_p\biggl(D \geq \E D + \frac{4c\sigma}{\sqrt{3}}\biggr)
    \geq \P\Bigl( Z\geq 4c/\sqrt{3} \Bigr) - \frac{C}{\sqrt{n}}
  \end{align*}
  for some absolute constant $C$ by the Berry--Esseen CLT. \commHL{Take $c=c'$ sufficiently small so that $\P\bigl( Z\geq 4c'/\sqrt{3} \bigr)\ge 1/4$. Then the lower bound above is at least $1/8$ for all $n\ge (8C)^{2}$ since  $C/\sqrt{n}\le 1/8$. By consiering the finite number of small values of $n$ not covered by the previous bound, we get $\P_p\bigl( D\geq c'(2r)^{d/2}\bigr)\geq c''$ for all $n\ge 1$ for some constant $c''>0$. Then the desired lower bound in the assertion holds by letting $c_{1}=\min(c',c'')$. }
\end{proof}

\begin{proof}[Proof of Theorem~\ref{thm:EV_LB}]
	Let $\epsilon=1-2p$.
	Given $\epsilon$, we define 
	\begin{equation}\label{eq: r_choice}
	r = r(\epsilon) = \commHL{\left\lfloor \frac{1}{2} (c_{1}/\eps)^{-2/d} \right\rfloor,}
	\end{equation}
	where $c_1$ is the constant in \thref{lem:devr}. Thus, this lemma shows
  that if $X$ is the number of $A$-particles that survive forever
  for DLAS on $\TT_r^d$, then
	$\mathbb{E}_pX \geq c_1^2(2r)^{d/2}$.
	Let $\bar\rho_t$ be the density of $A$-particles in DLAS on $\TT_{r}^d$. By translation invariance, for all $s\ge 0$ and $p\in [1/4,1/2)$,
	\begin{align}
	\bar\rho_s \geq \frac{c_1^2(2r)^{d/2}}{\abs{\TT_r^d}} = c_{1}^{2} (2r)^{-d/2} \geq 2c_1^*\epsilon,
	\label{eq:barrho_s}
	\end{align}
	where $c_1^*$ is a constant depending only on $d$.
	
  Now we set $t=t(\eps)=c_2r(\eps)^2/\log r(\eps)$ 
  and apply \thref{lem:torus} to obtain
  \begin{align*}
    \abs{\rho_t-\bar\rho_t} \leq c_1^*\epsilon,
  \end{align*}
  when $c_2=c_2(d)$ is chosen sufficiently small.
  Applying \eqref{eq:barrho_s}, we deduce that $\rho_t\ge c_{1}^{*}\epsilon$ for all $p\in [1/4,1/2)$. Note that $\rho_s$ is decreasing in $s$ since particles can only disappear (see  \cite[Lemma~2]{vicius_DLA} for a formal proof). Hence we have $\rho_s\ge  c_{1}^{*} \epsilon$ for all $s\leq t$ and $p\in [1/4,1/2)$. Therefore, for all $p\in [1/4,1/2)$,
	\begin{align*}
	\E_p V_\infty \ge \E_{p} V_{t} &=\int_0^t\rho_s\,ds \ge c_{1}^{*}\eps t \ge  C\frac{\epsilon^{-4/d+1}}{-\log \epsilon}
	\end{align*} 
	for some constant $C>0$ that may only depend on $d$.
\end{proof}

\subsection{An upper bound on \texorpdfstring{$\E V_t$}{E V\_t} for DLAS on \texorpdfstring{$\mathbb{Z}$}{Z}} \label{sec:UB}
 
In this subsection, we prove \thref{thm:UB}, showing that occupation time of the origin
in DLAS on $\ZZ$ is $O(t^{3/4})$ when $p=1/2$ and $\lambda_B=0$. 

We will work with the sequential process
defined in Section~\ref{sec:lemmas}. Recall that in this process,
we order the graph's vertices and then run the $A$-particles one at a time in sequence.
Each $A$-particle runs until
annihilation or time~$t$. By \thref{lem:sequential.process}, the total occupation
time of the origin in this process is stochastically larger than the total occupation time
of the origin in  DLAS up to time~$t$. Until the final proof
of \thref{thm:UB}, we will work with the sequential process with
particles present initially only on the positive integers, with vertices ordered $1,2,\ldots$.
We call this the \emph{one-sided sequential process}.

We start by proving an estimate on the probability of an $A$-particle at $k$ reaching the origin. We then use this to bound the expected occupation time of the origin in the one-sided sequential process.

  \begin{lemma}\thlabel{lem:seq}
    \commHL{Assume $p=1/2$.}    Let $G_k$ be the event
    that there is an $A$-particle initially at $k$ and that it visits the root
    in the one-sided sequential process run for time~$t$.
    Then for $1\leq k\leq 2t$,
    \begin{align*}
      \P(G_k)\leq Ck^{-1/2}e^{-k^2/12t}
    \end{align*}
    for an absolute constant $C$.
  \end{lemma}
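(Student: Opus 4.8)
The plan is to control \( \P(G_k) \) by splitting it according to two events that must both occur for the \( A \)-particle starting at \( k \) to visit the root in the one-sided sequential process: first, the surplus \( D([1,k-1]) \) of \( A \)-particles over \( B \)-particles in \( [1,k-1] \) must be nonnegative (otherwise there is an unannihilated \( B \)-particle somewhere in \( [1,k-1] \) blocking the particle, since \( B \)-particles do not move and each previously-released \( A \)-particle is consumed by the first unannihilated \( B \)-particle to its left before it can pass); second, conditioned on that surplus being some value \( j \geq 0 \), the particle must complete a gambler's-ruin-type crossing from \( k \) to \( \root \) before time \( t \) and before hitting the leftmost surviving \( B \)-particle, which lies approximately at position \( k + j \) (the surplus \( A \)-particles have, in the best case for reaching \( \root \), cleared out \( B \)-particles only to the right of \( k \)). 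I would make this precise via \thref{lem:sequential.process}'s setup and \thref{prop:path-swapping.valid}\ref{i:sequential}, so that when \( \aaa'_k \) is released the landscape of surviving \( B \)-particles to its left is empty exactly on the event \( \{D([1,k-1]) \geq 0\} \).

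First I would write \( \P(G_k) \le \sum_{j \ge 0} \P\big(D([1,k-1]) = j\big)\, q_j \), where \( q_j \) is the probability that a rate-1 simple random walk started at \( k \) reaches \( 0 \) before time \( t \) and before reaching position \( k + j + 1 \) (the location of the nearest surviving \( B \)-particle after the \( j \) surplus \( A \)-particles have each been absorbed to the right of \( k \)). The first factor is a local central limit / ballot-type estimate: \( D([1,k-1]) \) is a sum of \( k-1 \) i.i.d.\ \( \{-1,+1\} \) mean-zero variables, so \( \P(D([1,k-1]) = j) \lesssim k^{-1/2} e^{-cj^2/k} \) for \( j \) of the right parity (and the conditioning on the value of \( D \) will need a brief argument that it does not distort the relevant random walk; since the initial types in \( [1,k-1] \) are independent of the particle's trajectory this is clean, and I can absorb the positive-surplus condition into the hitting estimate). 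For \( q_j \), the relevant gambler's-ruin-with-time-cutoff estimate is \thref{lem:gamblers.ruin} (referenced in the proof overview): the chance of hitting \( 0 \) before \( k+j+1 \) is \( \lesssim (j+1)/(k+j+1) \lesssim (j+1)/k \), and tacking on ``before time \( t \)'' together with \( k \le 2t \) costs an extra factor like \( e^{-ck^2/t} \) coming from the requirement that a walk travel distance \( k \) within time \( t \) (via \thref{lem:SRW}).

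Combining, \( \P(G_k) \lesssim e^{-ck^2/t} \sum_{j\ge 0} k^{-1/2} e^{-c j^2/k}\, \frac{j+1}{k} \lesssim k^{-1/2} e^{-ck^2/t} \cdot k^{-1} \sum_{j\ge 0}(j+1)e^{-cj^2/k} \lesssim k^{-1/2} e^{-ck^2/t} \), since \( \sum_{j\ge 0}(j+1)e^{-cj^2/k} = O(k) \). Tracking the constant \( c \) in the exponent carefully through \thref{lem:SRW} should yield the stated \( e^{-k^2/12t} \); if the crude bound gives a worse constant I would instead split the time budget, using part of it for the ballistic displacement and keeping the gambler's-ruin estimate time-free, which is exactly the ``refined gambler's ruin'' strategy flagged in the overview.

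The main obstacle I anticipate is making the heuristic ``the nearest surviving \( B \)-particle is at about \( k + D([1,k-1]) \)'' rigorous inside the sequential process: one must argue that releasing \( \aaa'_1, \dots, \aaa'_{k-1} \) first and only then \( \aaa'_k \) really does guarantee that, conditionally on the surplus being \( j \), the particle \( \aaa'_k \) faces an unobstructed stretch of length at least roughly \( j \) to its right before the first \( B \)-particle it could hit — and that worst-casing over where those surplus particles deposited themselves only helps the bound. I expect this is handled by a monotonicity/rearrangement argument: the probability of \( \aaa'_k \) reaching \( \root \) is largest when every one of the \( j \) surplus \( A \)-particles annihilates a \( B \)-particle strictly to the right of \( k \) and the remaining \( B \)-particles are packed as close to \( k \) as possible, which is a deterministic reshuffling bound not needing any fine structure of the evolved configuration. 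The rest is routine random-walk estimation.
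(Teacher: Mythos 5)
Your overall strategy is the same as the paper's: condition on the surplus $D=D[1,k-1]$, note that $G_k$ forces $D\geq 0$ and forces the $A$-particle at $k$ to reach $\root$ before hitting the first surviving $B$-particle to its right, and close with the refined gambler's-ruin estimate (\thref{lem:gamblers.ruin}) together with $\E\bigl[(1+D)\1\{D\geq 0\}\bigr]=O(\sqrt{k})$. Your decomposition over the exact value $D=j$ via a local CLT is only a cosmetic difference from the paper, which conditions on the full history $\Ff_{k-1}$ of the first $k-1$ released particles and then uses Cauchy--Schwarz, $\E\abs{D}\leq\sqrt{\E D^2}=\sqrt{k-1}$.

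There is, however, one step where your justification is backwards and the stated intermediate bound fails. You claim the worst case is when the surviving $B$-particles are ``packed as close to $k$ as possible,'' yielding $q_j\leq (j+1)/(k+j+1)$. But the gambler's-ruin probability of reaching $\root$ from $k$ before the barrier at $k+R$ is $R/(R+k)$, which is \emph{increasing} in $R$: the farther the first surviving $B$-particle, the more likely the particle reaches the root first. Packing the $B$-particles tightly therefore minimizes, not maximizes, the probability, and $\P(G_k\mid D=j)\leq q_j$ is false pointwise --- the $(j+1)$th $B$-particle to the right of $k$ sits at a random distance $L_{j+1}\geq j+1$ with i.i.d.\ geometric gaps, and on the event that $L_{j+1}$ is large the conditional probability exceeds $(j+1)/(k+j+1)$. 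The fix is exactly what the paper does: bound the distance $R$ to the first surviving $B$-particle by $L_{1+D}$, get $\P(G_k\mid\Ff_{k-1})\lesssim (L_{1+D}/k)\,e^{-k^2/12t}$, and then use $\E[L_{1+D}\mid D]=2(1+D)$ by independence of the gap variables from $D$. This costs only a constant factor, so your computation survives the correction, but as written the inequality you sum over $j$ is not valid. (Two smaller points: the paper invokes the refined gambler's ruin only for $k\geq 3\sqrt{t}$, using the plain one otherwise since $e^{-k^2/12t}$ is of constant order there; and the combination of ``hit $\root$ before the barrier'' with ``hit $\root$ by time $t$'' is genuinely the content of \thref{lem:gamblers.ruin} --- one cannot simply multiply the two unconditional probabilities, since conditioning on winning the gambler's ruin biases the walk.)
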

  \begin{proof}
    We start by defining two random variables that are functions of the initial
    particle configuration. Fix $k$, and
    let $D=D[1,k-1]$ be the number of $A$-particles minus the number of $B$-particles initially
    in $[1,k-1]$. Observe that $D$ is distributed as
    $2\Bin(k-1,1/2)-(k-1)$. Consider the $i$th $B$-particle to the right of position~$k$, and
    let $L_i$ be its distance from $k$. Since $L_i$ is a sum of $i$ independent random variables
    with the geometric
    distribution of parameter $1/2$ on $\{1,2,\ldots\}$,
    \begin{align}
      \E L_i=2i. \label{eq:E[L_i]}
    \end{align}

    Let $\Ff_{k-1}$ be the $\sigma$-algebra generated by the locations of all $B$-particles
    and the paths of $A$-particles starting at positions $1,\ldots,k-1$ in the one-sided
    sequential process. This represents the information available after running
    the process for the $A$-particles at positions $1,\ldots,k-1$.
    Let $F_k$ be the event that no $B$-particles remain at these sites and that
    there is an $A$-particle at position~$k$.
    The event $G_k$ cannot occur unless
    $F_k$ occurs, since an $A$-particle at site~$k$ cannot move to the origin
    without colliding with one of these $B$-particles.
    If $F_k$ occurs,
    then the key quantity is the distance to the right of $k$ that the first remaining
    $B$-particle is found. Call this value $R$, setting it to $0$ if $F_k$ does not occur. 
    Given $R$ and that $F_k$
    occurs, the question is whether a simple random
    walk will reach the origin from $k$ in $t$ steps without moving to site $R+k$.
    Hence, using the gambler's ruin probability of reaching the origin without
    moving to site $R+k$ in any number of steps, we obtain the bound
    \begin{align*}
      \P(G_k\mid \Ff_{k-1})\leq \frac{R}{R+k}\leq\frac{R}{k}
    \end{align*}
    for all $k\geq 1$. When $3\sqrt{t}\leq k \leq 2t$,
    we can apply \thref{lem:gamblers.ruin} to obtain
    \begin{align*}
      \P(G_k\mid \Ff_{k-1})\leq \frac{2R}{R+k}e^{-k^2/12t}\leq\frac{2R}{k}e^{-k^2/12t}.
    \end{align*}
    Since $e^{-k^2/12t}$ is bounded away from $0$ for $1\leq k < 3\sqrt{t}$,
    it holds for all $1\leq k\leq 2t$ that
    \begin{align}\label{eq:gr1}
      \P(G_k\mid \Ff_{k-1}) = O\biggl(\frac{R}{k} e^{-k^2/12t}\biggr).
    \end{align}
    
    We claim that $R\leq L_{1+D}$ on the event $F_k$. Indeed, for $F_k$ to occur,
    the $A$-particles initially in $[1,k-1]$ must annihilate all $B$-particles on that interval
    (and in particular $D\geq 0$).
    The remaining $D$ $A$-particles can then annihilate at most the first $D$ $B$-particles
    to the right of position~$k$, which makes $R$ at most the distance from $k$ to
    the $(1+D)$th $B$-particle to its right.
    And if $F_k$ does not occur, then $R=0$. Hence $R\leq L_{1+\abs{D}}$ regardless of whether
    $F_k$ occurs. Thus it follows
    from \eqref{eq:gr1} that
    \begin{align}
      \P\bigl(G_k\bigmid \Ff_{k-1}\bigr) = O\biggl(\frac{L_{1+\abs{D}}}{k}e^{-k^2/12t}\biggr). \label{eq:G|L}
    \end{align}
    Hence 
    \begin{align}
      \P(G_k\mid D)= \E\biggl[ \P\bigl(G_k\bigmid \Ff_{k-1}\bigr)\biggmid D\biggr]&\leq
        \E\Biggl[ \frac{L_{1+\abs{D}}}{k}\Biggmid D\Biggr]O\bigl(e^{-k^2/12t}\bigr)\\
        &=\frac{2(1+\abs{D})}{k}O\bigl(e^{-k^2/12t}\bigr), \label{eq:G|D}
    \end{align}
    where the final equality uses \eqref{eq:E[L_i]} together with the independence
    of $D$ from $(L_i)_{i\geq 1}$.
    Taking expectations gives
    \begin{align*}
      \P(G_k) \leq \frac{1}{k}(1 + \E\abs{D})O\bigl(e^{-k^2/12t}\bigr)&\leq 
      \frac{1}{k}\bigl(1 + \sqrt{\E D^2}\bigr)
      O\bigl(e^{-k^2/12t}\bigr)\\
        &= O(k^{-1/2}e^{-k^2/12t}\bigr).
    \end{align*}   
    This finishes the proof.
  \end{proof}

\newcommand{\seqocc}{U^+_t}
\begin{proposition}\thlabel{prop:one.sided.bound}
  Let $\seqocc$ be the total occupation time of the origin
  in the one-sided sequential process run for time~$t$. Then $\E \seqocc = O\bigl(t^{3/4}\bigr)$.
\end{proposition}
\begin{proof}
  Let $X_k$ be the total time that an $A$-particle starting at position~$k$ occupies
  the origin in the one-sided sequential
  process run for time~$t$.
  As in \thref{lem:seq}, let $G_k$ be the event that there is an $A$-particle initially at position~$k$
  that visits the origin. Then
  \begin{align*}
    \E X_k = \E[X_k\mid G_k]\P(G_k)\leq t^{1/2}\P(G_k),
  \end{align*}
  bounding $\E[X_k\mid G_k]$ by the occupation time of the origin up to time~$t$
  by a random walk starting at the origin, which is at most $t^{1/2}$ by
  \thref{lem:local_time}.
  
  We break $\E \seqocc$ into two parts:
  \begin{align}\label{eq:Eseqocc}
    \E \seqocc &= \sum_{k=1}^{\floor{2t}}\E X_k + \sum_{k=\floor{2t}+1}^{\infty} \E X_k
      \leq t^{1/2}\Biggl(\sum_{k=1}^{\floor{2t}}\P(G_k) + \sum_{k=\floor{2t}+1}^{\infty} \P(G_k)\Biggr).
  \end{align}
  For the first sum, we apply \thref{lem:seq} and then bound the sum by an integral
  to get
  \begin{align}\label{eq:sum1}
    \begin{split}
    \sum_{k=1}^{\floor{2t}}\P(G_k) \leq C\int_0^\infty x^{-1/2}e^{-x^2/12t}\,dx
     &= C\int_0^\infty t^{1/4}u^{-1/2}e^{-u^2/12}\,du\\&=O(t^{1/4}).
    \end{split}
  \end{align}
  For the second sum, we bound $\P(G_{\floor{2t}+1+i})$ by the probability
  of a random walk having displacement $2t+i$ in time~$t$ and
  apply \thref{lem:poisson_tail} to get
  \begin{align*}
    \P(G_{\floor{2t}+i+1})\leq \exp\Biggl(-\frac{(t+i)^2}{2(2t+i)}\Biggr).
  \end{align*}
  Hence
  \begin{align}\label{eq:sum2}
    \sum_{k=\floor{2t}+1}^{\infty} \P(G_k) &\leq \sum_{i=0}^{\infty}\exp\Biggl(-\frac{(t+i)^2}{2(2t+i)}\Biggr) \\
    &\le  \sum_{i=0}^{\infty}\exp\Biggl(-\frac{(t+i)}{4}\Biggr) 
    \le \int_{t-1}^{\infty} \exp(-x/4)\,dx = O\bigl(e^{-t/4}\bigr).
  \end{align}
  Equations \eqref{eq:sum1} and \eqref{eq:sum2} together with \eqref{eq:Eseqocc}
  prove the proposition.
\end{proof}

\begin{remark}
  It is possible to avoid the work of proving \thref{lem:gamblers.ruin} in the appendix
  as follows. First,
  use the usual gambler's ruin computation rather than \thref{lem:gamblers.ruin} in \thref{lem:seq}, proving  
  only that $\P(G_k)=O(k^{-1/2})$. Then in
  \eqref{eq:Eseqocc}, break $\E\seqocc$ into three sums, bounding the first
  using the estimate $\P(G_k)=O(k^{-1/2})$, the second using the moderate deviations
  estimate \thref{lem:SRW} for the random walk, and the last using
  \thref{lem:poisson_tail} as was done. The downside of this approach is that it adds
  an extra logarithmic factor to the bound given in \thref{prop:one.sided.bound}.
\end{remark}

Now we have all of the necessary estimates to prove our theorem. 
\begin{proof}[Proof of Theorem~\ref{thm:UB}]
  Let $V^+_t$ and $V^-_t$ denote the total occupation time 
  of the origin by $A$-particles in DLAS on $\ZZ$ with $p=1/2$
  up to time~$t$ 
  with particles placed initially only at positive integers
  and only at negative integers, respectively. Let $V^0_t$ denote the occupation time of
  the origin by $A$-particles in
  DLAS with only a single particle started at the origin (i.e., the local time of the origin
  by a single random walk if an $A$-particle is placed at the origin, and zero otherwise).
  Applying \thref{lem:polarized_DLAS} twice, we have 
  \begin{align*}
    \E V_t\leq \E V^+_t+\E V^0_t+\E V^-_t.
  \end{align*}
  By \thref{lem:sequential.process,prop:one.sided.bound}, we have 
  $\E V^+_t\leq \E\seqocc=O(t^{3/4})$, and by symmetry the same bound holds
  for $\E V^-_t$. Finally $\E V^0_t\leq t^{1/2}$ by \thref{lem:local_time}, completing the proof that
  $\E V_t=O(t^{3/4})$.
\end{proof}

\subsection{An upper bound on the critical exponent for DLAS on \texorpdfstring{$\mathbb{Z}$}{Z}} \label{sec:EV_UB}

In this section, we prove \thref{thm:EV_UB}.
As in our proof of \thref{thm:UB}, we will use the sequential version of DLAS defined in 
Section~\ref{sec:lemmas}. Consider the sequential process run for infinite time 
with all particles in the initial configuration removed from the negative integers.
Order the vertices $0,1,2,\ldots$. Let
$U_k$ be $0$ if a $B$-particle starts at site~$k$; otherwise,
let it be the total time spent at the root by the $A$-particle initially
at position~$k$. If $k=0$ and there is an $A$-particle at $0$, we let $U_0$ be the number of visits to $0$ by that particle for $t \geq 1$.
The following estimate is most of the work in proving \thref{thm:EV_UB}.

\begin{lemma}\thlabel{lem:Uk}
For $k\ge 1$, let $D = D[0,k-1]$. Then
\begin{align*}
    \E U_0 = \ind{\text{$0$ contains an $A$-particle}}+ 2p(1-p)^{-1} \ind{\text{$1$ contains an $A$-particle}}
  \end{align*}
  and
  \begin{align*}
    \E U_k = 2p(1-p)^{-1}\E\bigl[ \ind{D \geq 0}(1 +D) \bigr].
  \end{align*}
\end{lemma}
\begin{proof}\newcommand{\bG}{k\to\root}\newcommand{\G}{\{\bG\}}%
  We first describe the case $k \geq 1$. As in the proof of \thref{lem:seq}, let $L_i$ be the distance from position~$k$ to
  the $i$th $B$-particle to its right.
   Let $L = \ind{D \geq 0} L_{D+1}$. Let $\G$ denote the event that there is an $A$-particle at $k$ and it reaches the origin for some $t \geq 1$.
   In the sequential process we consider, each $A$-particle in $[0,k-1]$ is eventually annihilated,
   since there are infinitely many $B$-particles in the initial configuration. Thus, if $D\geq 0$
   and there is initially an $A$-particle at $k$, 
   then the $A$-particles initially present in $[0,k-1]$ will annihilate all $B$-particles
   in that region as well as the first $D$ $B$-particles to the right of $k$. 
   Hence, when the $A$-particle at $k$ is run, there are no $B$-particles to its
   left, and the closest $B$-particle to its right is at distance $L_{D+1}$.
   By the gambler's ruin calculation,
$$\P(\bG \mid L) = \f{pL}{k + L}.$$
Conditional on $L$, each time the particle visits the origin, it is annihilated without returning with probability 
$$r = \f 12 \f{ 1}{k + L}.$$
This is the probability it moves to the right on its first step and then reaches $L$ before returning to to the origin. 
Hence, conditional on $\G$, the number of visits to $\root$ by the particle at $k$ is distributed
geometrically on the positive integers with success probability $r$.
Since the particle stays at $\root$ for expected time~$1$ on each visit,
\begin{align}\label{eq:Uk|L}
  \E[ U_k\mid L] = \frac{\P(\bG\mid L)}{r} = 2pL.
\end{align}
As in \eqref{eq:E[L_i]}, we have $\E L_i=i/(1-p)$, since $L_i$ is a sum of $i$ independent
geometric random variables with success probability $1-p$. Since $L=\1\{D\geq 0\}L_{1+D}$,
\begin{align}\label{eq:L|D}
  \E [ L  \mid D ] = (1-p)^{-1}\ind{D \geq 0} (1+D).
\end{align}
Taking expectations in \eqref{eq:Uk|L} and \eqref{eq:L|D} completes the proof for $k \geq 1$.

When $k=0$ and there is an $A$-particle at $0$, the situation is slightly different.  The reason is that the particle will move to the negative integers and then eventually back to $0$ a Geometric distributed with mean 1 number of times before moving to $1$ for the first time. After doing so, if $1$ contains a $B$-particle, then the $A$-particle is destroyed. If $1$ contains an $A$-particle, then we may repeat the argument for $U_1$ with $D=0$. This gives the claimed formula. 
\end{proof}


\begin{proof}[Proof of Theorem \ref{thm:EV_UB}]
  Let $U^+$ be the occupation time of $\root$ in
 the sequential process on the halfline run for infinite time, and observe that $U^+=\sum_{k=1}^{\infty}U_k$.
 It suffices to show that $\E_p U^+\leq C (1-2p)^{-3}$ for an absolute
 constant $C$, since then Lemmas~\ref{lem:sequential.process} and \ref{lem:polarized_DLAS} complete the proof
 as in the conclusion of the proof of \thref{thm:UB}.
 
 Let $(S_k)_{k=0}^{\infty}$ be a \commHL{simple} random walk on the integers jumping a step in the positive
 direction with probability~$p$ and in the negative with probability~$1-p$.
 By \thref{lem:Uk},
 \begin{align*}
   \E U^+ = \sum_{k=1}^\infty \E U_k = \frac{2p}{1-p}\sum_{k=1}^{\infty}\E\bigl[ \ind{S_k \geq 0}(1 +S_k) \bigr].
 \end{align*}
 This sum is essentially the expected area under the positive excursions of a random walk
 with negative drift, which we can compute exactly. We rewrite the sum to get
 \begin{align*}
   \E U^+
     = \frac{2p}{1-p}\sum_{u=0}^{\infty}\E\Biggl[\sum_{k=1}^{\infty}\ind{S_k\geq u}\Biggr]
     &= \frac{2p}{1-p}\Biggl(\sum_{u=0}^{\infty}\E\Biggl[\sum_{k=0}^{\infty}\ind{S_k\geq u}\Biggr]-1\Biggr).
 \end{align*}
 Given that $(S_k)$ ever hits $u$, the distribution of $\sum_{k=0}^{\infty}\ind{S_k\geq u}$
 is the same as the unconditional distribution of $\sum_{k=0}^{\infty}\ind{S_k\geq 0}$.
 Hence, with $M$ denoting the maximum value taken by $(S_k)$,
 \begin{align}\label{eq:two.sums}
   \E U^+ = \frac{2p}{1-p}\Biggl(\sum_{u=0}^{\infty}\P(M\geq u)\sum_{k=0}^{\infty}\P(S_k\geq 0)-1\Biggr).
 \end{align}
 
 Now we compute the two sums.
 By the gambler's ruin calculation for biased random walk,
 \begin{align}\label{eq:first.sum}
   \sum_{u=0}^{\infty}\P(M\geq u) &= \sum_{u=0}^{\infty}\biggl(\frac{p}{1-p}\biggr)^u = \frac{1-p}{1-2p}.
 \end{align}
 To approach the second sum, let
 $S$ denote the number of steps for a \commHL{simple random walk on the integers} with bias $p$ starting at $1$
 to hit $0$. Since $S$ is equal to $1$ with probability~\commHL{$1-p$} and otherwise is distributed as the sum
 of two independent copies of itself, it satisfies $\E S = 1-p+2p\E S$ and hence $\E S = (1-p)/(1-2p)$.
 Now, let $T=\sum_{k=0}^{\infty}\ind{S_k\geq 0}$. We claim that
 \begin{align}\label{eq:T.recurrence}
   T \eqd \begin{cases}
     1+S+T &\text{with probability $p$,}\\
     1+T & \text{with probability $p$,}\\
     1 & \text{with probability $1-2p$.}
   \end{cases}
 \end{align}
 The first case corresponds to the event that $(S_k)$ initially jumps to the right, which occurs
 with probability~$p$. Then $1+S$ is contributed to the sum before it returns to $0$,
 and the sum from that time on is distributed as $T$ by the strong Markov property.
 In the second case, the first step of $(S_k)$ is to the left \commHL{(with probability $1-p$), and conditional on that event, it eventually returns to $0$ (with probability $p/(1-p)$ by Gambler's ruin) and the sum for $T$ resets. Hence $T$ equals to $1+T$ with probability $p$.} And in the final case, $(S_k)$ never returns to $0$.
 Taking expectations of both sides of \eqref{eq:T.recurrence} and solving the resulting equation yields
 \begin{align}\label{eq:second.sum}
   \E T = \frac{p(3-5p)}{(1-2p)^2}+1.
 \end{align}
 From \eqref{eq:two.sums}, \eqref{eq:first.sum}, and \eqref{eq:second.sum},
 \begin{align*}
   \E U^+ &= \frac{2p}{1-p}\Biggl( \biggl(\frac{1-p}{1-2p}\biggr)\biggl(\frac{p(3-5p)}{(1-2p)^2}+1\biggr)-1\Biggr) = \frac{2p^2(2-3p)^2}{(1-p)(1-2p)^3} = O\bigl((1-2p)^{-3}\bigr).
 \end{align*}
\end{proof}

\section{Mean field behavior on bidirected trees} \label{sec:tree}
Consider the infinite $2d$-regular tree with some vertex $\root$ distinguished as the root.
At each vertex~$v$, orient $d$ of the edges to point toward $v$ and $d$ to point away. Define the
random walk kernel $K$ to send a walker at $v$ along each of the $d$ out-edges with probability
$1/d$. We denote this oriented tree by $\BT$.
In this section, we consider the two-type DLAS on $\BT$ with $\lambda_{B}=0$. As usual,
the initial configuration is one particle per site, where each particle
is independently given type~$A$ with probability~$p$.
The subset of $\BT$ made up of vertices with a path to the root forms
a rooted $d$-ary tree (see Figure~\ref{fig:tree}).
We will ignore the rest of the tree, since no particles from it can contribute to $V_t$.
We define level~$k$ of the tree as the $d^k$ vertices in this subtree 
\commHL{from which there is a directed path of $k$ edges
toward} the root.

\begin{figure}
	\centering
	\begin{tikzpicture}[tv/.style={circle,fill,inner sep=0,
		minimum size=0.15cm,draw}]
	\path (0,0) node[tv,label=right:$\root$] (v) {}
	+(1, 1) node[tv,label=right:$v_2$] (v0) {}
	+(-1, 1) node[tv,label=right:$v_1$] (v1) {};
	\path (v0)+(-.5,1) node[tv] (v00) {}
	+(.5,1) node[tv] (v01) {}
	(v1)+(-.5,1) node[tv] (v10) {}
	+(.5,1) node[tv] (v11) {}
	;
	\draw[-Stealth] (v11)--(v1);
	\draw[-Stealth] (v10)--(v1);
	\draw[-Stealth] (v1)--+(170:.5);
	\draw[-Stealth] (v01)--(v0);
	\draw[-Stealth] (v00)--(v0);
	\draw[-Stealth] (v0)--+(170:.5);
	\draw[-Stealth] (v1)--(v);
	\draw[-Stealth] (v0)--(v);
	\draw[-Stealth] (v)--+(170:.5);
	\draw[-Stealth] (v)--+(260:.5);
	\draw[Stealth-] (v11)--+(110:1);
	\draw[Stealth-] (v11)--+(70:1);
	\draw[-Stealth] (v11)--+(170:.5);
	\draw[Stealth-] (v10)--+(110:1);
	\draw[Stealth-] (v10)--+(70:1);
	\draw[-Stealth] (v10)--+(170:.5);
	\draw[Stealth-] (v01)--+(110:1);
	\draw[Stealth-] (v01)--+(70:1);
	\draw[-Stealth] (v01)--+(170:.5);
	\draw[Stealth-] (v00)--+(110:1);
	\draw[Stealth-] (v00)--+(70:1);
	\draw[-Stealth] (v00)--+(170:.5);
	
	\end{tikzpicture}
	\caption{The tree $\vec{\mathcal{T}}_4$, a $4$-regular tree with oriented edges. Pictured is the
		the portion of the tree
		that leads to the root $\root$, which forms a binary tree.}\label{fig:tree}
\end{figure}

By \thref{lem:finite.range.truncation}, when bounding $\E V_t$
we can ignore particles far from the root.
Thus, our strategy will be to work with DLAS with particles removed beyond some
level~$n$ (which we will later take to be $\ceil{ct}$) and to ignore time,
counting the total number of visits to the root by $A$-particles
in any amount of time. 
The main goal of this section is to prove the following bounds on this quantity.
\begin{proposition}\thlabel{prop:tree.discrete.bound}
  Consider DLAS on $\BT$ with particles initially placed only at levels $1,\ldots,n$.
  Let $W_n$ denote the number of $A$-particles that visit the root in any amount of time.
  \begin{enumerate}[label = (\alph*)]
    \item If $p=1/2$, then for absolute constants $c$ and $C$, it holds for all $d,n\geq 2$ that
      \begin{align*}
        c\log n\leq \E W_n \leq C\log n.
      \end{align*}\label{i:tdb.critical}
    \item If $\epsilon=1/2-p>0$, then $\E W_n$ approaches a finite limit as $n\to\infty$,
      and for absolute constants $c$, $C$, and $\eta$, it holds for
      all $0<\epsilon<\eta$ and all $d\geq 2$ that
      \begin{align*}
        c\log\biggl(\frac{1}{\epsilon}\biggr)\leq \lim_{n\to\infty}\E W_n \leq C\log\biggl(\frac{1}{\epsilon}\biggr).
      \end{align*}
      \label{i:tdb.subcritical}
  \end{enumerate}
\end{proposition}

Before we go further, we show how these bounds prove Theorems~\ref{thm:tree} and \ref{thm:tree.exponent}.
First, we invoke \thref{lem:monotonicity,lem:finite.range.truncation} to relate $V_t$ and $W_n$.
\begin{lemma}\label{lem:VW.relationship}
  For some absolute constant $c$,
  \begin{align*}
    \E W_{\floor{t/2}} - O(1) \leq \E V_t \leq \E W_{\ceil{ct}} + O(1).
  \end{align*}
\end{lemma}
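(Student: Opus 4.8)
The plan is to relate the continuous-time quantity $V_t$ to the discrete count $W_n$ in two steps, using the graphical construction and the lemmas already established. First, consider DLAS on $\BT$ with particles initially placed only at levels $1,\ldots,n$, and let $W_n$ be the number of $A$-particles visiting the root (in any amount of time). I claim that $\E W_n$ is non-decreasing in $n$: adding particles at level $n+1$ can only add $A$-particles and remove $B$-particles as far as the root-bound subtree is concerned (since $B$-particles do not move and the new $A$-particles can only annihilate $B$-particles), so \thref{lem:monotonicity} gives that every $A$-particle visiting the root with levels $1,\ldots,n$ still does so with levels $1,\ldots,n+1$. Moreover each such particle visits the root exactly once before becoming invisible forever (after annihilation it sits still at rate $\lambda_B = 0$, never to move again), so $W_n$ literally counts distinct $A$-particles that ever reach $\root$, and $W_n$ is monotone in $n$ pathwise under the natural coupling.

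\emph{Upper bound.} Take $r = \ceil{ct}$ with $c$ the absolute constant from \thref{lem:finite.range.truncation}, so that $\abs{\rho_s - \rho_s^{(r)}} \le e^{-t}$ for all $s \le t$, where $\rho_s^{(r)}$ is the density in DLAS with particles beyond level $r$ deleted. Integrating over $s \in [0,t]$ gives $\E V_t \le \int_0^t \rho_s^{(r)}\,ds + t e^{-t} = \E V_t^{(r)} + O(1)$, where $V_t^{(r)}$ is the occupation time of $\root$ in the truncated process. Now in the truncated process every $A$-particle that reaches $\root$ does so at most once and, conditioned on reaching it, occupies it for a total time that is stochastically dominated by the total local time at $\root$ of an unconditioned rate-$1$ random walk run for all time — but that is infinite on $\BT$ restricted to the root-bound subtree only if the walk returns, which it does not: once an $A$-particle steps off the root-bound subtree along an out-edge it never returns (out-edges always point toward the root, but from a non-root vertex on the path there are also out-edges leaving the path, and more importantly from $\root$ itself the walk leaves along an out-edge never to come back). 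So each visiting particle spends a total time at $\root$ that is an $\mathrm{Exp}(1)$-type holding time with mean $1$ (more carefully, an a.s.-finite amount with expectation $\le C$ by a geometric-return argument as in \thref{lem:Uk}). Hence $\E V_t^{(r)} \le C\,\E W_r = \E W_{\ceil{ct}} + O(1)$ after absorbing the constant, giving the upper bound.

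\emph{Lower bound.} For the lower bound, restrict attention to DLAS with particles placed only at levels $1,\ldots,\floor{t/2}$; by \thref{lem:monotonicity} deleting the particles beyond level $\floor{t/2}$ only decreases the density of $A$-particles at $\root$ (fewer $A$-particles), so $\rho_s \ge \rho_s^{[\le \floor{t/2}]}$. An $A$-particle starting at level $k \le \floor{t/2}$ that ever visits the root does so within $k$ steps at the earliest, and once it reaches the root it remains in a neighborhood long enough that, with probability bounded below, it is occupying $\root$ at some time $\le t$; more simply, each such particle, having reached $\root$, has an $\mathrm{Exp}(1)$ holding time there, so the expected occupation-time contribution to $V_t$ is at least a constant times $\P(\text{particle reaches }\root\text{ by time }t/2)$ — and a particle that reaches $\root$ at all reaches it by time $t/2$ with probability bounded below (its path to the root has length $\le t/2$, traversed by a rate-$1$ walk, so a Poisson tail bound via \thref{lem:poisson_tail} gives a constant lower bound on finishing in time $t/2$, uniformly). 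Summing over the $W_{\floor{t/2}}$ such particles and taking expectations yields $\E V_t \ge c\,\E W_{\floor{t/2}} - O(1)$, and rescaling constants gives the claimed $\E W_{\floor{t/2}} - O(1) \le \E V_t$.

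The main obstacle is handling the occupation-time-versus-visit-count conversion rigorously: a single $A$-particle on $\BT$ can in principle make the root-bound subtree walk, return to $\root$ several times, and accumulate occupation time, so ``visits the root'' and ``total time at the root'' differ. The clean fix is the observation that on $\BT$ the root-bound subtree is a $d$-ary tree whose edges are all oriented toward $\root$, so from any vertex the walk steps toward $\root$ with probability $1/d$ and off the subtree with probability $(d-1)/d$; a walk that reaches $\root$ then leaves along an out-edge and, with probability $(d-1)/d$ each subsequent step, wanders off forever, so the number of returns to $\root$ is geometric with a parameter bounded away from $0$ and $1$, making the total occupation time of $\root$ by each visiting $A$-particle have expectation $\Theta(1)$ — exactly as in the geometric-return computation in \thref{lem:Uk}. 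This converts $\E V_t$ up to multiplicative and additive $O(1)$ factors into $\E[\#\{A\text{-particles reaching }\root\}]$, which is $\E W_n$ for the appropriate truncation level, completing both bounds.
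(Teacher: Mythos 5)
Your overall route is the same as the paper's: truncate with \thref{lem:finite.range.truncation} for the upper bound, restrict to levels $1,\ldots,\floor{t/2}$ via \thref{lem:monotonicity} for the lower bound, and convert occupation time at $\root$ into the visit count $W_n$. The genuine gap is in your lower bound. You argue that each particle counted by $W_{\floor{t/2}}$ reaches $\root$ in time ``with probability bounded below'' and then ``rescale constants.'' That yields only $\E V_t\geq c\,\E W_{\floor{t/2}}-O(1)$ with $c<1$, and a multiplicative constant strictly less than $1$ cannot be rescaled into an additive $O(1)$ error when $\E W_n\to\infty$ (which it does, at rate $\log n$ for $p=1/2$). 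So the stated inequality $\E W_{\floor{t/2}}-O(1)\leq\E V_t$ does not follow from your argument. The repair — and what the paper does — is to use that the probability a single rate-$1$ walk makes fewer than $t/2$ jumps by time $t$ is exponentially small, to work on the event $E$ that \emph{every} relevant walk makes at least $t/2$ jumps by time $t$ (on which the count of visitors by time $t$ equals $W_{\floor{t/2}}$ exactly, since a particle at level $k\leq t/2$ that ever reaches $\root$ must do so on its $k$-th jump), and to control the error on $E^c$ by a second-moment bound: with $\tilde N$ the number of particles whose underlying walks visit $\root$, one has $\E\tilde N^2=O(t^2)$ and $\E[\tilde N\1_{E^c}]\leq e^{-bt}\,\E\tilde N^2=O(1)$. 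That second-moment step over the random collection of visiting particles is the ingredient your per-particle estimate is missing.

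Your upper bound is essentially the paper's and is salvageable, but it is internally inconsistent. You correctly observe that once a particle leaves $\root$ it can never return (the unique tree edge separating its new location from $\root$ is oriented away from $\root$), so each visiting $A$-particle contributes at most one sojourn of expected length at most $1$, giving $\E V^{(r)}_\infty\leq\E W_r+O(1)$ with multiplicative constant exactly $1$ (the additive $O(1)$ absorbing the possible initial $A$-particle at $\root$, which $W_r$ does not count). But you then assert that ``the number of returns to $\root$ is geometric with a parameter bounded away from $0$ and $1$''; the number of returns is deterministically zero, and if returns really occurred geometrically you would only obtain $\E V_t\leq C\,\E W_{\ceil{ct}}$ with $C>1$, which — for the same reason as above — cannot be ``absorbed'' into the additive $O(1)$ of the statement. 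Keep the no-return observation and drop the geometric-return story.
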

\newcommand{\truncV}[1]{\overline{V}_{#1}}
\begin{proof}
  We start with the upper bound. Let $c$ be the constant from \thref{lem:finite.range.truncation}.
  Fix $t\ge 0$, and for each $s\ge 0$, let $\overline{\rho}_s$ and $\truncV{s}$ respectively denote the density
  and the total occupation time at time $s$ of the root by $A$-particles
  in DLAS on $\BT$ with particles beyond level $ct$ removed from the initial configuration.
  Then by \thref{lem:finite.range.truncation},
  \begin{align*}
    \E V_t \leq \E \truncV{t} + \int_0^t\abs{\rho_s-\overline{\rho}_s}\,ds \leq \E \truncV{\infty}+te^{-t}.
  \end{align*}
  Since any $A$-particle visiting the root is either immediately annihilated by a $B$-particle
  or remains there for expected time $1$, we have
  $\E \truncV{\infty}\leq \E W_{\ceil{ct}}+p$ 
  (the reason for the extra $p$ is that $\truncV{\infty}$ includes the possible contribution that
  an $A$-particle initially at the root provides to the root's occupation time.)
  This completes the proof of the upper bound.
  
  For the lower bound, let $\tilde{V}_{t}$ denote the total occupation time of the root
  by $A$-particles 
  in DLAS on $\BT$ with the starting configuration containing $A$-particles only at levels
  $1,\ldots,\floor{t/2}$.
  By \thref{lem:monotonicity}, we have $\E V_{t}\geq\E \tilde{V}_{t}$. Let $\tilde{N}_{t}$ be the total number
  of $A$-particles in this truncated system that visit the root in time~$t$.
  Since each $A$-particle moving to the root stays there
  for expected time~$1$ except possibly the first one (which is immediately annihilated
  if the root initially contains a $B$-particle), we have $\E\tilde{V}_{t}\geq \E \tilde{N}_{t}-1$.
  Thus it suffices to show that $\E \tilde{N}_{t}\geq \E W_{\floor{t/2}}-O(1)$.
  
  Let $\tilde{N}$ be the number of $A$-particles in this truncated initial configuration 
  whose underlying random walk paths
  visit the root, whether or not they are annihilated before reaching it. There
  are $d^k$ vertices at level~$k$, and a random walk starting at such a vertex
  has probability $1/d$ of moving toward the root at each step and hence has probability $d^{-k}$
  of ever visiting the root. By expressing $\tilde{N}$ as a sum of indicators, we find that
  $\E \tilde{N} =O(t)$ and $\E \tilde{N}^2 = O(t^2)$. Now, let $E$ be the event that the underlying random walk
  for each particle counted by $\tilde{N}$ makes at least $t/2$ jumps in time $t$. On this event,
  we have $\tilde{V}_t = W_{\floor{t/2}}$, since all particles that ever visit the origin
  will do so in time $t$. It follows that 
  \begin{align*}
    \tilde{V}_t \geq W_{\floor{t/2}}\1_E \geq W_{\floor{t/2}} - \tilde{N}\1_{E^c}.
  \end{align*}
  
  Now we bound $\E[\tilde{N}\1_{E^c}]$. The probability that a random walk path jumps fewer than
  $t/2$ times in time $t$ is bounded by $e^{-bt}$ for some $b>0$, by basic
concentration  properties of the Poisson distribution. Thus $\P(E^c\mid \tilde{N})\leq \tilde{N}e^{-bt}$.
  This yields $\E[\tilde{N}\1_{E^c}]\leq e^{-bt}\E \tilde{N}^2=O(1)$, and therefore
  $\E \tilde{V}_t\geq \E W_{\floor{t/2}} - O(1)$.
\end{proof}

\begin{proof}[Proof of Theorem~\ref{thm:tree}]
  The theorem follows immediately by applying Lemma~\ref{lem:VW.relationship}
  followed by \thref{prop:tree.discrete.bound}\ref{i:tdb.critical}.
\end{proof}

\begin{proof}[Proof of Theorem~\ref{thm:tree.exponent}]
  Let $\epsilon=1-2p$.
  By Lemma~\ref{lem:VW.relationship} and \thref{prop:tree.discrete.bound}\ref{i:tdb.subcritical},
  \begin{align*}
    \limsup_{t\to\infty} \E_p V_t &\leq C\log(1/\epsilon) + O(1)\leq C'\log(1/\epsilon)\\
    \intertext{and}
    \liminf_{t\to\infty} \E_p V_t &\geq c\log(1/\epsilon) - O(1) \geq c'\log(1/\epsilon)
  \end{align*}
  for sufficiently small $\epsilon$.
  Since $V_t$ converges upwards to $V_\infty$, by the monotone convergence theorem
  $\E_p V_{\infty}$ exists and is bounded above by $C'\log(1/\epsilon)$ and below by $c'\log(1/\epsilon)$.
\end{proof}

\subsection{A recursive distributional equation for \texorpdfstring{$W_n$}{W\_n}}\label{sec:rde}

We now set out to prove \thref{prop:tree.discrete.bound}.
The main idea to understand $W_n$
is to take advantage of the recursive structure of the tree.
Let $v_1,\ldots,v_d$ be the vertices at level~$1$ of the tree (see Figure~\ref{fig:tree}).
For any of these vertices, the number of $A$-particles visiting them in this system
is distributed as $W_{n-1}$. Because the tree is directed, these counts are independent.
Thus, we obtain a distributional equation expressing
the distribution of $W_n$ in terms
of $d$ independent copies of $W_{n-1}$ as follows. Let $W_{n-1}^{(1)},\ldots,W_{n-1}^{(d)}$
denote these counts. Let $Y_i$ be $1$ if $v_i$ initially contains an $A$-particle and be $-1$
if it initially contains a $B$-particle. From the dynamics of the system, the number of
$A$-particles with a chance of moving from $v_i$ to the root is $\bigl(W_{n-1}^{(i)}+Y_i\bigr)^+$,
where $(\cdot)^+ = \max\{0,\cdot\}$. Each of these $A$-particles independently moves
to the root with probability $d^{-1}$. Hence,
\begin{align}\label{eq:W_n.Aa}
  W_n \eqd \sum_{i=1}^d \Bin\Bigl(\bigl(W_{n-1}^{(i)} + Y_i\bigr)^+,\,d^{-1}\Bigr).
\end{align}
Here we use the notation $\Bin(X,p)$ to denote the $p$-thinning of a nonnegative integer--valued
random variable $X$. That is, $\Bin(X,p)$ is defined as $\sum_{i=1}^X B_i$, where
$B_1,B_2,\ldots$ are independent of each other and $X$ and have distribution $\Ber(p)$.

Notationally, we will express this relationship between $W_{n-1}$ and $W_n$ by defining
an operator $\Aa$ on probability distributions that maps the law of $W_{n-1}$ to the law of $W_n$.
Let $Y_1,\ldots,Y_d$ be independent, each equal to $1$ with probability~$p$ and $-1$ with probability $1-p$.
Let $X_1,\ldots,X_d$ be independent copies of an arbitrary random variable $X$ taking
values in the nonnegative integers.
Then we define the result of applying $\Aa$ to the law of $X$ by
\begin{align}\label{eq:Adef}
  \Aa X = \sum_{i=1}^d \Bin\Bigl(\bigl(X_i + Y_i\bigr)^+,\,d^{-1}\Bigr).
\end{align}
We will sometimes abuse notation and treat $\Aa X$ as a random variable with this distribution.
The following lemma summarizes the fact given in \eqref{eq:W_n.Aa}:
\begin{lemma}\thlabel{lem:AW}
  The random variables $(W_n)_{n\geq 0}$ from \thref{prop:tree.discrete.bound} satisfy
  the distributional equality
  \begin{align*}
    W_{n+1}\eqd\Aa W_n.
  \end{align*}
\end{lemma}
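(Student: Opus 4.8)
The plan is to formalize the recursive structure of the tree that was described informally just above the statement. The key observation is that in DLAS on $\BT$ with $\lambda_B=0$ and particles placed initially only at levels $1,\ldots,n+1$, the only way an $A$-particle can reach $\root$ is by first passing through one of the level-$1$ vertices $v_1,\ldots,v_d$, and each $v_i$ is the root of an isomorphic copy of the truncated tree having particles at levels $1,\ldots,n$ of that subtree. First I would fix $i$ and observe that the subtree hanging off $v_i$ (in the sense of vertices with a directed path to $v_i$) together with its initial configuration is an exact copy of the level-$n$ truncated system, so the number of $A$-particles that ever visit $v_i$ \emph{using only particles from that subtree} is distributed as $W_n$. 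Because the tree is directed with all relevant edges pointing toward $\root$, no particle from the subtree rooted at $v_i$ can ever influence the subtree rooted at $v_j$ for $j\neq i$ (a particle leaving $v_i$ goes to $\root$ and is then either annihilated or counted, never returning upward), so these $d$ counts are mutually independent; call them $W_n^{(1)},\ldots,W_n^{(d)}$.

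Next I would account for the particle initially at $v_i$ itself. Let $Y_i=+1$ if $v_i$ starts with an $A$-particle and $Y_i=-1$ if it starts with a $B$-particle. If $Y_i=-1$, the $B$-particle at $v_i$ absorbs one of the incoming $A$-particles (if any arrive) before any can proceed to $\root$; if $Y_i=+1$, the $A$-particle at $v_i$ joins the incoming ones. Since $\lambda_B=0$, the $B$-particle at $v_i$ (if present) sits still and simply annihilates exactly one of the $A$-particles that reach $v_i$, while all $A$-particles share the same simple random walk dynamics from $v_i$ onward — so the net number of $A$-particles available at $v_i$ to attempt the step to $\root$ is exactly $(W_n^{(i)}+Y_i)^+$. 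Here one should note that the abelian-type property (the order in which the $A$-particles at or reaching $v_i$ are run does not affect how many survive the $B$-particle at $v_i$) is what makes this count well defined; this can be cited from \thref{lem:monotonicity} together with the sequential-process viewpoint of \thref{lem:sequential.process}, or simply from the fact that with $\lambda_B=0$ a stationary $B$-particle removes precisely one $A$-particle regardless of timing. Each of these $(W_n^{(i)}+Y_i)^+$ surviving $A$-particles then, independently, takes its next step toward $\root$ with probability $1/d$ (and otherwise steps away, never to return, since the tree is directed), contributing $\Bin\bigl((W_n^{(i)}+Y_i)^+,d^{-1}\bigr)$ visits to $\root$.

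Summing over $i=1,\ldots,d$, and using that the $W_n^{(i)}$ are i.i.d.\ copies of $W_n$ and the $Y_i$ are i.i.d.\ $\pm1$ with $\P(Y_i=1)=p$, independent of everything, gives
\begin{align*}
  W_{n+1} \eqd \sum_{i=1}^d \Bin\Bigl(\bigl(W_n^{(i)}+Y_i\bigr)^+,\,d^{-1}\Bigr) = \Aa W_n,
\end{align*}
which is exactly the claimed identity, recalling the definition of $\Aa$ in \eqref{eq:Adef}. The base case $n=0$ (no particles placed, so $W_0=0$ deterministically, and $\Aa$ applied to $\delta_0$ is supported on the number of level-$1$ $A$-particles that step to $\root$) is consistent, though the lemma as stated only asserts the recursion. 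The main obstacle, and the only point requiring genuine care rather than bookkeeping, is justifying that ``the number of $A$-particles reaching $v_i$ from its subtree, then surviving the $B$-particle at $v_i$, then stepping to $\root$'' is a legitimate decomposition of the actual DLAS dynamics — i.e., that particles reaching $v_i$ from different subtrees, and the particle born at $v_i$, really do interact only through the single $B$-particle at $v_i$ and never reorder or re-interact in a way that breaks independence. This is precisely where directedness of $\BT$ and the $\lambda_B=0$ assumption are essential, and I would phrase it carefully using the path-swapping / sequential construction from Section~\ref{sec:lemmas.proofs} so that the count of $A$-particles visiting $\root$ is manifestly a deterministic function of the per-subtree counts and the $Y_i$'s.
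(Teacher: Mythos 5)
Your proposal is correct and follows essentially the same route as the paper: the paper derives the identity $W_{n+1}\eqd\Aa W_n$ in Section~\ref{sec:rde} by exactly this decomposition over the level-$1$ vertices $v_1,\dots,v_d$, using the self-similarity of the directed subtrees for the i.i.d.\ copies $W_n^{(i)}$, the $Y_i$ for the particle at $v_i$, and the probability $d^{-1}$ of a particle at $v_i$ stepping to $\root$ (never to return, by directedness). The extra care you take in justifying that a stationary $B$-particle at $v_i$ removes exactly one incoming $A$-particle regardless of timing is a point the paper passes over with ``from the dynamics of the system,'' but it is a correct and harmless elaboration rather than a different argument.
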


\commTJ{This operator is monotone with respect to the stochastic order.}
\begin{lemma}\thlabel{lem:A.monotone}
  If $X\preceq X'$, then $\Aa X\preceq \Aa X'$.
\end{lemma}
\begin{proof}
  Let 
  \begin{align*}
    \Aa X &= \sum_{i=1}^d \Bin\Bigl(\bigl(X_i + Y_i\bigr)^+,\,d^{-1}\Bigr)\\\intertext{and}
    \Aa X' &= \sum_{i=1}^d \Bin\Bigl(\bigl(X'_i + Y_i\bigr)^+,\,d^{-1}\Bigr),
  \end{align*}
  with $X_1,\ldots,X_d$ i.i.d.\ copies of $X$ and $X'_1,\ldots,X'_d$ i.i.d.\ copies of $X'$.
  Since $X\preceq X'$, we can couple each $X_i$ with $X_i'$ so that $X_i\leq X_i'$ a.s.,
  thus producing a coupling in which $\Aa X\leq \Aa X'$ a.s.
\end{proof}

Our goal now is to use the operator $\Aa$ to analyze the growth of $\E W_n$. 
We make the following observation:    
\begin{lemma}\thlabel{lem:A.growth}
  Let $X$ be a random variable taking nonnegative integer values.
  Then
  \begin{align}\label{eq:A.growth}
   \commHL{ \E[\Aa X] } = \E X + 2p-1 + (1-p)\P(X=0).
  \end{align}
\end{lemma}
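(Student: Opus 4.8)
The plan is to reduce the claimed identity \eqref{eq:A.growth} to a short one-variable computation, exploiting linearity of expectation and the symmetry in the definition \eqref{eq:Adef} of $\Aa$. First I would take expectations in $\Aa X = \sum_{i=1}^d \Bin\bigl((X_i+Y_i)^+,\,d^{-1}\bigr)$. Conditioning on the pair $(X_i,Y_i)$ and using that $\Bin(m,d^{-1})$ has mean $m/d$, each summand has expectation $d^{-1}\E[(X_i+Y_i)^+]$. The $d$ pairs $(X_i,Y_i)$ are identically distributed, so all $d$ terms are equal and the prefactor $d^{-1}$ cancels the sum, giving $\E\Aa X = \E[(X+Y)^+]$, where $X$ has the law of the original variable and $Y$, independent of $X$, equals $1$ with probability $p$ and $-1$ with probability $1-p$.

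Next I would evaluate $\E[(X+Y)^+]$ by conditioning on the value of $Y$. On the event $\{Y=1\}$, since $X$ takes nonnegative integer values we have $(X+1)^+ = X+1$, contributing $p(\E X + 1)$. On $\{Y=-1\}$, the key elementary identity is $(X-1)^+ = (X-1)\ind{X\geq 1} = X - 1 + \ind{X=0}$, valid precisely because $X$ is integer-valued; this contributes $(1-p)\bigl(\E X - 1 + \P(X=0)\bigr)$. Summing the two contributions and collecting the constant term as $p-(1-p)=2p-1$ yields \eqref{eq:A.growth}.

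There is no real obstacle here; the only subtlety worth flagging is that the term $(1-p)\P(X=0)$ arises exactly from the correction $\ind{X=0}$ in the identity for $(X-1)^+$, which is where integrality of $X$ is used. If $\E X=\infty$, both sides of \eqref{eq:A.growth} are infinite and the identity still holds, so no separate argument is needed for that case.
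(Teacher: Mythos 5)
Your proof is correct and follows the same route as the paper: reduce to $\E[(X+Y)^+]$ via the thinning mean and the $d$ identical summands, then account for the boundary term $\ind{X=0}$ when $Y=-1$, which is exactly the identity $\E[(X_i+Y_i)^+]=\E X+\E Y_i+\P(X_i=0)\P(Y_i=-1)$ that the paper invokes. Your write-up simply spells out the conditioning on $Y$ that the paper leaves implicit.
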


\begin{proof}
  This follows from the definition of $\Aa$ and the statement
  \begin{align*}
    \E \bigl[ (X_i+Y_i)^+ \bigr] &= \E\bigl[X_i+Y_i +\ind{X_i=0,\,Y_i=-1}\bigr]=
     \E X + \E Y_i + \P(X=0)\P(Y_i=-1),
  \end{align*}
  where $X_1,\ldots,X_d$ and $Y_1,\ldots,Y_d$ are independent with $X_i\eqd X$
  and $Y_i$ equal to $1$ with probability $p$ and to $-1$ with probability $1-p$.
\end{proof}

This shows that the growth of $W_n$ depends on its concentration, in that
$\P(W_n=0)$ is the key quantity in \eqref{eq:A.growth} when we apply it
to bound $\E W_{n+1}-\E W_n$. Heuristically, the explanation for the growth rate of $\E W_n$
is that $W_n$ has a Poisson-like lower tail, and in particular
$\P(W_n=0)\approx e^{-c\E W_n}$.
In the critical case $p=1/2$, \thref{lem:A.growth} then gives
\begin{align*}
  \E W_{n+1}-\E W_n \approx C e^{-c\E W_n}.
\end{align*}
Solving this difference equation shows that $\E W_n$ grows logarithmically.
In the subcritical case, a similar difference equation shows that
$\E W_n$ converges as $n$ tends to infinity and gives bounds on the limit.

The lower bounds in \thref{prop:tree.discrete.bound} are proven just as in this description.
We need an anticoncentration bound of the form $\P(W_n=0)\geq Ce^{-c\E W_n}$.
To get this, we consider the worst-case scenario, when $W_n$ is as concentrated as possible.
One can then calculate that even in this case, $\Aa W_n$ is not too concentrated,
and therefore $W_{n+1}$ satisfies the desired anticoncentration bound.

The upper bounds in \thref{prop:tree.discrete.bound} are more difficult. For technical reasons,
we bound the expectations of a sequence of random variables $(U_n)_{n\geq 0}$
that serves as a stochastic upper bound for $(W_n)_{n\geq 0}$. We then show an exponential
concentration bound for the lower tail of $U_n$ using the theory of size-bias couplings.

In Sections~\ref{sec:size.bias} and \ref{sec:log.concavity},
we lay out some background material we will need for this argument. The upper bounds
from Proposition \ref{prop:tree.discrete.bound} are proven in Section~\ref{sec:tree.upper}
followed by the lower bounds in Section~\ref{tree.lower}.

\subsection{Concentration by size-bias coupling}\label{sec:size.bias}
  Let $\pi$ be a probability distribution on the nonnegative real numbers
  with mean $\mu$.
  The \emph{size-bias transform} of $\pi$ is the distribution $\pi^s$ with
  Radon--Nikodym derivative
  \begin{align*}
    \frac{d\pi^s(x)}{d\pi} = \frac{x}{\mu}.
  \end{align*}
  We will often abuse notation and speak of the size-bias transform of a random
  variable as another random variable, rather than referring to their distributions.
  In this spirit, the size-bias transform of a random variable $X$ on the nonnegative integers
  is the random variable $X^s$ with distribution given by
  \begin{align*}
    \P(X^s=k) = \frac{k}{\E X}\P(X=k).
  \end{align*}
  We will in general use the notation $X^s$ to denote the size-bias transform of $X$.
  
  The size-bias transform comes up in a variety of contexts; see \cite{AGK} for
  a broad survey. Our interest here has its roots in Stein's method for distributional
  approximation. If a random variable and its
  size-bias transform are close to each other in the right sense, one can use Stein's method
  to prove that the random variable is approximately Gaussian or Poisson (see 
  \cite[Sections~3.4 and 4.3]{Ross}).
  It was eventually realized that the same approach could be used to prove
  concentration inequalities \cite{GG,AB,CGJ}. The starting point is
  that if $X$ and its size-bias transform exactly satisfy $X^s\smash{\eqd} X+1$, then $X$
  is Poisson \cite[Corollary~11.3]{AGK}.  If instead $X^s\preceq X+1$,
  then it can be proven that $X$ satisfies a Poisson-like tail concentration bound
  \cite[Theorems~1.2--1.3]{AB}. We use a result that relaxes the condition $X^s\preceq X+1$
  further. A coupling of $X$ and its size-bias transform $X^s$ is
  \emph{$(c,p)$-bounded for the upper tail} if
  \begin{align*}
    \P(X^s\leq X+c\mid X^s)\geq p\text{ a.s.},
  \end{align*}
  and is \emph{$(c,p)$-bounded for the lower tail} if
  \begin{align*}
    \P(X^s\leq X+c\mid X)\geq p\text{ a.s.}
  \end{align*}
  In this language, the condition $X^s\preceq X+1$ is that
  $X$ admits a size-bias coupling that is $(1,1)$-bounded for the upper and lower tails.
  
  \begin{proposition}[{\cite[Theorem~3.3]{CGJ}}]\thlabel{thm:concentration}
  Let $\mu=\E X$ and let $h(x)=(1+x)\log(1+x)-x$ for $x\geq -1$.
  \begin{enumerate}[(a)]
    \item \label{i:concentration.upper}
      If $X$ admits a size-bias coupling that is $(c,p)$-bounded for the 
      upper tail, then for all $x\geq 0$,
      \begin{align*}
        \P\bigl(X-\mu/p\geq x\bigr) \leq \exp\Biggl(-\frac{\mu}{cp}h\Biggl(\frac{px}{\mu}\Biggr)\Biggr)
                          \leq \exp\Biggl(-\frac{x^2}{2c(x/3+\mu/p)}\Biggr).
      \end{align*}
    \item \label{i:concentration.lower}
      If $X$ admits a size-bias coupling that is $(c,p)$-bounded for the 
      lower tail, then
      for all $0\leq x\leq p\mu$,
      \begin{align*}
        \P\bigl(X-p\mu\leq -x\bigr)\leq
        \exp\Biggl(-\frac{p\mu}{c}h\Biggl(-\frac{x}{p\mu}\Biggr)\Biggr) \leq \exp\Biggl( -\frac{ x^2}{2pc\mu}\Biggr). 
      \end{align*}
  \end{enumerate}
  
\end{proposition}
In particular, under the hypotheses of part~(b) of this theorem, we have $\P(X=0)\leq\exp(-p\mu/\commHL{2}c)$
by setting $x=p\mu$. (In \cite{CGJ}, part~(b) of this theorem is stated with the condition $x<p\mu$, 
but taking a limit extends it to $x=p\mu$.)

To use this theorem, we will need to compute size-bias transforms.
In general, this can be done effectively for random variables that are
sums of simpler random variables (e.g., indicators), which need not be independent
(see \cite[Section~2.4]{AGK}).
The case of a size-bias transform of an independent sum is the simplest one:
just choose a random summand with probability in proportion to its expectation, and then apply the transform
to that summand only. The following lemma is a formal statement of this:
\begin{lemma}[{\cite[eq.~(25)]{AGK}}]\thlabel{lem:bias.sum}
  Let $S=X_1+\cdots + X_n$, where $X_1,\ldots,X_n$ are nonnegative \commHL{independent}  random variables.
  Choose $I$ from $\{1,\ldots,n\}$ independently of all else, 
  choosing $I=i$ with probability $\E X_i/\E S$.
  Then
  \begin{align*}
    S^s \eqd X_1 + \cdots + X_n + X_I^s - X_I.
  \end{align*}
\end{lemma}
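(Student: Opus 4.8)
\emph{Proof proposal.} This is the standard ``to size-bias an independent sum, pick one summand in proportion to its mean and size-bias only that summand'' identity, and the plan is to verify it directly from the definition: show that the right-hand side obeys the characterizing relation of the size-bias transform of $S$, namely $\E[f(S^s)]=\E[Sf(S)]/\E S$ for every bounded measurable $f$. I take the $X_i$ to be independent, as in the surrounding discussion --- without independence one would have to bias the \emph{joint} law of $(X_1,\dots,X_n)$ in the $I$-th coordinate rather than replace $X_I$ by its marginal size-bias. The degenerate cases are harmless: we assume $\E S>0$ so all transforms are defined, and any index with $\E X_i=0$ has $X_i\equiv 0$ a.s.\ and is assigned probability $0$ by the rule $\P(I=i)=\E X_i/\E S$, so it may be discarded; also $\sum_i\P(I=i)=1$ since $\E S=\sum_i\E X_i$ by linearity, so $I$ is well defined.

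Write $S'=X_1+\cdots+X_n+X_I^s-X_I$ for the right-hand side, where $I$ is independent of everything else and, conditionally on $\{I=i\}$, the variable $X_i^s$ is a size-bias transform of $X_i$ drawn independently of $(X_j)_{j\ne i}$. Fix a bounded measurable $f$. Conditioning on $I$ gives
\[
  \E[f(S')]=\sum_{i=1}^n \frac{\E X_i}{\E S}\;\E\!\left[f\!\Big(X_i^s+\textstyle\sum_{j\ne i}X_j\Big)\right].
\]
Since $X_i^s$ is independent of $(X_j)_{j\ne i}$, I would condition on $(X_j)_{j\ne i}$ and apply the defining property $\E[g(X_i^s)]=\E[X_i g(X_i)]/\E X_i$ with $g(x)=f(x+\sum_{j\ne i}X_j)$; using independence of $X_i$ from $(X_j)_{j\ne i}$ to recombine the two expectations into a single joint one, the $i$-th term becomes $\frac{\E X_i}{\E S}\cdot\frac{1}{\E X_i}\E[X_i f(S)]=\frac{1}{\E S}\E[X_i f(S)]$. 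Summing over $i$ and using linearity,
\[
  \E[f(S')]=\frac{1}{\E S}\,\E\!\left[\Big(\sum_{i=1}^n X_i\Big) f(S)\right]=\frac{\E[Sf(S)]}{\E S}=\E[f(S^s)].
\]
As $f$ was an arbitrary bounded measurable function, $S'\eqd S^s$.

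There is no genuine obstacle here: the whole content is the interchange of the inner (size-bias) expectation with the expectation over the remaining summands --- which is exactly where independence is used --- together with the cancellation of $\E X_i$ against $\P(I=i)$. The only points needing a word of care are the degenerate cases noted above and the observation that $\P(I=\cdot)$ is a genuine probability vector. If one prefers to avoid test functions, the same computation runs on probability generating functions, or --- for integer-valued summands, the only case needed in the paper --- one can instead verify $\P(S'=k)=\frac{k}{\E S}\P(S=k)$ directly by the same conditioning argument.
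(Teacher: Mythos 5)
Your proof is correct. The paper itself gives no argument for this lemma—it is quoted directly from the survey \cite[eq.~(25)]{AGK}—so there is no ``paper's proof'' to compare against; your direct verification of the characterizing identity $\E[f(S^s)]=\E[Sf(S)]/\E S$ by conditioning on $I$, applying the size-bias identity to the $I$-th summand, and recombining via independence is the standard argument and is carried out correctly, including the degenerate cases ($\E S>0$, indices with $\E X_i=0$).

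One remark worth making explicit: as stated, the lemma omits the hypothesis that $X_1,\ldots,X_n$ are independent, and you are right that the conclusion in the form $S^s\eqd X_1+\cdots+X_n+X_I^s-X_I$ (with $X_I^s$ drawn independently of the other summands) genuinely requires it—in the dependent case one must resample $(X_j)_{j\ne I}$ from their conditional law given the biased value of $X_I$, which is the general form of \cite[eq.~(25)]{AGK}. Your decision to assume independence is the correct reading: every application in the paper (\thref{lem:bias.thinning}, \thref{lem:spine.induction}, \thref{prop:sb.coupling}) applies the lemma to sums of independent terms.
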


Next, we compute the size-bias transform of a $p$-thinning:
\begin{lemma}\thlabel{lem:bias.thinning}
  \commTJ{Let $X$ be a nonnegative random variable with finite expectation.}
  Let $Y=\Bin(X,p)$. Then
  \begin{align}\label{eq:bias.thinning}
    Y^s\eqd 1 + \Bin(X^s-1, p).
  \end{align}
\end{lemma}
\begin{proof}
  \commTJ{Rewrote entire proof with more details --TJ}
  To put this statement in a more general context, suppose $Y$ is a mixture
  of random variables $Z_n$ governed by a probability measure $h$; that is,
  for Borel sets $\mathscr{B}$,
  \begin{align*}
    \P(Y\in\mathscr{B}) = \int \P(Z_n\in\mathscr{B})\,h(dn).
  \end{align*}
  Then $Y^s$ is the mixture of the transforms $Z_n^s$, but governed not by $h$
  but by $h^s$, the measure whose Radon--Nikodym derivative satisfies $\frac{dh^s}{dh}(n) = \frac{\E Z_n}{\E Y}$ \cite[Lemma~2.4]{AGK}.
  
  In our case, $Y$ is the mixture of $Z_n\sim\Bin(n,p)$ governed by the law of $X$.
  By \thref{lem:bias.sum}, the size-bias transform of $Z_n$ is $1 + \Bin(n-1, p)$.
  Since $\E Z_n$ is proportional to $n$, the measure $h'$ is the size-bias transform of the law of $X$.
  Thus $Y^s$ is the mixture of $1+\Bin(n-1,p)$ governed by the law of $X^s$, as stated in
  \eqref{eq:bias.thinning}.
\end{proof}

Finally, size-biasing ignores any mass the distribution places on $0$:
\begin{lemma}\thlabel{lem:bias.nonzero}
  Let $X^>$ denote a random variable distributed as $X$ conditioned
on $X>0$. Then $(X^>)^s\eqd X^s$.
\end{lemma}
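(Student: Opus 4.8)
The plan is to compute the size-bias transform of $X^>$ directly from its definition and check that it agrees pointwise with that of $X$ on the positive integers, which is where both transforms are supported.

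First I would record two elementary facts. For every integer $k\geq 1$ we have $\P(X^>=k)=\P(X=k)/\P(X>0)$, and since $X$ is nonnegative, $\E X=\E[X\ind{X>0}]=\E[X\mid X>0]\,\P(X>0)$, so that $\E X^>=\E X/\P(X>0)$. Both quantities are finite and positive, which is exactly what is needed for the size-bias transforms of $X$ and of $X^>$ to be defined in the first place.

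Then, for $k\geq 1$, I would compute
\[
  \P\bigl((X^>)^s=k\bigr)=\frac{k\,\P(X^>=k)}{\E X^>}=\frac{k\,\P(X=k)/\P(X>0)}{\E X/\P(X>0)}=\frac{k\,\P(X=k)}{\E X}=\P(X^s=k),
\]
and observe that both $(X^>)^s$ and $X^s$ assign zero mass to $\{0\}$. This yields $(X^>)^s\eqd X^s$. There is no genuine obstacle here: the argument is a one-line cancellation of the normalizing factor $\P(X>0)$, and the only point worth flagging is the standing assumption $0<\E X<\infty$ that is implicit in any statement about size-bias transforms.
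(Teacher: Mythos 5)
Your proof is correct and is exactly the computation the paper has in mind when it says the lemma ``follows immediately from the definition of the size-bias transform'': the normalizing factor $\P(X>0)$ cancels between $\P(X^>=k)$ and $\E X^>$. Nothing further is needed.
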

\begin{proof}
  This follows immediately from the definition of the size-bias transform.
\end{proof}

The size-bias transform does not mesh well with the usual stochastic order,
in that it is not true in general that $X\stprec Y$ implies that $X^s\stprec Y^s$. But this is
true in a stronger stochastic order known as the \emph{likelihood ratio order}.
For integer-valued random variables $X$ and $Y$, we say that $X\lrprec Y$
if $\P(Y=k)/\P(X=k)$ is increasing over the union of the supports of $X$ and $Y$,
interpreting this quantity as $\infty$ when $\P(Y=k)>0$ and $\P(X=k)=0$.
It is not hard to show that $X\lrprec Y$ implies $X\stprec Y$ \cite[Theorem~1.C.1]{SS}.

\begin{proposition}\thlabel{prop:bias.order}
  Let $X$ and $Y$ take values in the nonnegative integers.
  If $X\lrprec Y$, then $X^s\lrprec Y^s$.
\end{proposition}
\begin{proof}
  By definition of the size-bias transform,
  \begin{align*}
    \frac{\P(Y^s=n)}{\P(X^s=n)} = \frac{(\E X) \P(Y=n)}{(\E Y)\P(X=n)}.
  \end{align*}
  This is increasing in $n$ since $X\lrprec Y$.
\end{proof}

\subsection{Log-concave random variables}\label{sec:log.concavity}
  
Let $X$ be a random variable taking values in the nonnegative integers, and let $P_n=\P(X=n)$.
We say that $X$ and its distribution are \emph{log-concave} if 
\begin{enumerate}[(i)]
  \item $P_n^2\geq P_{n-1}P_{n+1}$ for all $n\geq 1$; and
  \item the sequence $P_0,P_1,\ldots$ has no internal zeros (i.e., if $P_i,P_k>0$ for $i<k$,
    then $P_j>0$ for all $i\leq j\leq k$).
\end{enumerate}
Our need for log-concave random variables boils down to the following fact, which will be used
in combination with \thref{prop:bias.order} at a key moment.
\begin{lemma}\thlabel{lem:lr.logconcave}
  Let $X$ be a random variable taking values in the nonnegative integers.
  Then $X\lrprec X+1$ if and only if $X$ is log-concave.
\end{lemma}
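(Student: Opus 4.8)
The plan is to reduce both conditions to a single statement about the monotonicity of the ratio sequence $P_{k-1}/P_k$, where $P_k=\P(X=k)$. Since $\P(X+1=k)=P_{k-1}$, the definition of the likelihood ratio order says that $X\lrprec X+1$ holds exactly when $k\mapsto P_{k-1}/P_k$ is non-decreasing over the union of the supports of $X$ and $X+1$, which is the set $U=\{k:P_k>0\text{ or }P_{k-1}>0\}$, with the convention that the ratio equals $\infty$ when $P_{k-1}>0=P_k$. So the whole proof amounts to comparing this monotonicity with the two defining properties of log-concavity.

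For the forward direction, assume $X\lrprec X+1$. First I would rule out internal zeros: if $P_{j-1}>0=P_j$ but $P_m>0$ for some $m>j$, let $j'$ be the least index above $j$ with $P_{j'}>0$; then $P_{j'-1}=0$, so the ratio at $j$ is $\infty$ while the ratio at the larger index $j'$ is $0$, contradicting monotonicity. Hence the support is an interval $\{a,\dots,b\}$ with $b\le\infty$. The log-concavity inequality $P_n^2\ge P_{n-1}P_{n+1}$ is automatic when $P_n=0$ (then $n<a$ forces $P_{n-1}=0$, and $n>b$ forces $P_{n+1}=0$). When $a\le n\le b$, both $n$ and $n+1$ lie in $U$, and monotonicity gives $P_{n-1}/P_n\le P_n/P_{n+1}$, read as $\le\infty$ when $n=b$; clearing denominators gives $P_{n-1}P_{n+1}\le P_n^2$.

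For the reverse direction, assume $X$ is log-concave with support $\{a,\dots,b\}$, so that $U=\{a,a+1,\dots,b,b+1\}$ (or $\{a,a+1,\dots\}$ if $b=\infty$). I would simply verify that $k\mapsto P_{k-1}/P_k$ is non-decreasing along this set: it equals $0$ at $k=a$, it equals $P_{k-1}/P_k\in(0,\infty)$ for $a<k\le b$ with the step-up between consecutive values for $a<k\le b-1$ being precisely the log-concavity inequality $P_{k-1}P_{k+1}\le P_k^2$, and it equals $\infty$ at $k=b+1$ (so the last step is free). Outside $U$ every relevant probability vanishes and nothing needs checking.

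I do not anticipate a real obstacle; the proof is essentially bookkeeping. The only points requiring care are the boundary cases—the ratio being $0$ at the bottom of the support, jumping to $\infty$ at the top, and the internal-zeros argument—together with keeping the convention for $P_{k-1}/P_k$ when the denominator is zero consistent with the definition of $\lrprec$ used in the paper.
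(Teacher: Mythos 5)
Your proposal is correct and follows essentially the same route as the paper: both translate $X\lrprec X+1$ into monotonicity of $k\mapsto P_{k-1}/P_k$, observe that this is equivalent to the log-concavity inequality once internal zeros are excluded, and use the ratio monotonicity to rule out internal zeros in the forward direction. You simply spell out the boundary conventions ($0$ at the bottom of the support, $\infty$ at the top) more explicitly than the paper does.
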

\begin{proof}
  Let $P_n=\P(X=n)$ as above. If $(P_n)_{n \geq 1}$ has no internal zeros
  and $N$ is the highest value such that $P_N>0$ (allowing $N=\infty$),
  then the statements
  \begin{align*}
    P_{n-1}P_{n+1}&\leq P_n^2 \quad\text{for all $n\geq 1$}\\\intertext{and}
    \frac{P_{n-1}}{P_n}&\leq \frac{P_n}{P_{n+1}} \quad \text{for all $1\leq n\leq N$}
  \end{align*}
  are equivalent. The first of these statements is the log-concavity of $(P_n)_{n\geq 0}$,
  and the second is the meaning of $X\lrprec X+1$ under the assumption of no internal zeros.
  Thus, the log-concavity of $X$ implies $X\lrprec X+1$. For the other direction, we just
  need to show that $X\lrprec X+1$ implies that $(P_n)_{n\geq 0}$ has no internal zeros.
  Indeed, suppose $P_i,P_j>0$ for $i<j$. 
  Since $P_i/P_{i+1}\leq P_{j-1}/P_j$, we have $P_{i+1}>0$, and then we can proceed
  to show that $P_{i+2}>0$, and so on.
\end{proof}

Next, we state a few technical facts about log-concave random variables, which come from translating
combinatorial results into probabilistic ones. The first result we give is well known;
see \cite[Proposition~2]{Stanley} for the most standard proof,
or see \cite{Liggett} for a completely elementary proof.
\begin{proposition}\thlabel{prop:lc.convolution}
  If $X$ and $Y$ are log-concave and independent, then so is $X+Y$.
\end{proposition}

Next, we show that log-concavity is preserved under thinning, which follows quickly
from a combinatorial result of Brenti's \cite{Brenti}. We suspect that this has been used before,
but we could not find it stated anywhere.
\begin{proposition}\thlabel{prop:lc.thinning}
  Let $X$ be log-concave. Then $\Bin(X,p)$ is also log-concave for any $0<p<1$.
\end{proposition}
\begin{proof}
  First, we prove this under the assumption that $X$ has finite support.
  Let $P_n=\P(X=n)$ as before. 
  We call a sequence $x_0,x_1\ldots$ log-concave if it satisfies the same conditions
  as $P_0,P_1,\ldots$, i.e., no internal zeros and $x_n^2\geq x_{n-1}x_{n+1}$ for $n\geq 1$.
  From the log-concavity of $(P_n)_{n\geq 0}$, it follows by direct calculation
  that the sequence $\{(1-p)^nP_n\}_{n\geq 0}$ is also log-concave. Now, let
  \begin{align*}
    Q_k = \sum_{n= k}^{\infty}\binom{n}{k} (1-p)^n P_n.
  \end{align*}
  By \cite[Theorem~2.5.3]{Brenti}, the sequence $(Q_k)_{k\geq 0}$ is log-concave (this is where we use
  that $X$ has finite support).
  This implies that the sequence
  \begin{align*}
    \biggl\{ \biggl(\frac{p}{1-p}\biggr)^k Q_k\biggr\}_{k\geq 0}
  \end{align*}
  is log-concave, by direct calculation. And now we have proven that $\Bin(X,p)$
  is log-concave, since
  \begin{align*}
    \P\bigl( \Bin(X,p)=k \bigr) &= \sum_{n=k}^{\infty} \binom{n}{k} p^k (1-p)^{n-k}\P(X=n) = 
    \biggl(\frac{p}{1-p}\biggr)^k Q_k.
  \end{align*}

  Finally, we remove the condition that $X$ has finite support with a limit argument.
  Let $X_n$ be distributed as $X$ conditioned on $X\leq n$. Then $X_n\to X$ in distribution.
  Since $X_n$ is log-concave and has finite support, $\Bin(X_n,p)$ is
  log-concave. It is then straightforward to see that $\Bin(X_n,p)$ converges in
  distribution to $\Bin(X,p)$ and that a weak limit of log-concave random variables
  is log-concave.
\end{proof}

\subsection{Upper bounds on root visits}\label{sec:tree.upper}

As we mentioned at the end of Section~\ref{sec:rde}, 
we will define a sequence $(U_n)_{n\geq 0}$ that serves as an upper bound
for $(W_n)_{n\geq 0}$.
Let $U_0=1$ a.s. Then inductively define the sequence by
\begin{align*}
  U_{n+1} \eqd \Aa U_n^>,
\end{align*}
where we use the notation $X^>$ as in \thref{lem:bias.nonzero}.
The overarching goal of the section is to show that $U_n$ admits a size-bias coupling
bounded for the lower tail, which we do by induction. 
\commTJ{As we described following \thref{lem:A.growth}, we can use a concentration bound
for the lower tail of $U_n$ to bound the growth of $\E U_n$, which will allow us to prove
$\E U_n=O(\log n)$. Finally, the same estimate holds for $W_n$ since it is bounded by $U_n$.}
We start by proving that $U_n$ is in fact an upper bound for $W_n$.

\begin{lemma}\thlabel{lem:VU.dominance}
  For all $n\geq 0$,
  \begin{align*}
    W_n\stprec U_n.
  \end{align*}
\end{lemma}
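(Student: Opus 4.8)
The plan is to prove $W_n \stprec U_n$ by induction on $n$, using the monotonicity of the operator $\Aa$ together with the fact that $\Aa$ applied to $X^>$ dominates $\Aa$ applied to $X$ (since conditioning on being positive can only increase a nonnegative random variable in the stochastic order). First I would verify the base case: $W_0$ is the number of $A$-particles at the root in the configuration with particles only at levels $1,\dots,0$, i.e.\ no particles at all, so $W_0 = 0$ a.s.\ (or, if one counts the root itself, $W_0 \stprec 1 = U_0$); in either reading $W_0 \stprec U_0 = 1$.

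For the inductive step, assume $W_n \stprec U_n$. I would first argue that $\Aa$ is monotone with respect to $\stprec$: if $X \stprec X'$, then $\Aa X \stprec \Aa X'$. This follows from the explicit form \eqref{eq:Adef}, since $\Aa X$ is built as a sum of independent pieces $\Bin((X_i + Y_i)^+, d^{-1})$, and each such piece is stochastically monotone in $X_i$ (the map $x \mapsto (x+y)^+$ is nondecreasing, and $n \mapsto \Bin(n,d^{-1})$ is stochastically increasing), so the coupling that realizes $X_i \le X_i'$ coordinatewise, together with shared $Y_i$ and shared Bernoulli thinning variables, realizes $\Aa X \le \Aa X'$ almost surely. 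Combining this with \thref{lem:AW}, which gives $W_{n+1} \eqd \Aa W_n$, and with the inductive hypothesis, we get $W_{n+1} \eqd \Aa W_n \stprec \Aa U_n$.

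It then remains to compare $\Aa U_n$ with $U_{n+1} \eqd \Aa U_n^>$. Since $U_n$ is a nonnegative integer–valued random variable, $U_n \stprec U_n^>$: indeed $\P(U_n^> \le k) = \P(U_n \le k \mid U_n > 0) \le \P(U_n \le k)$ for every $k \geq 0$, because conditioning on $\{U_n > 0\}$ removes mass from $\{U_n = 0\}$ and cannot decrease the upper tail. Applying the monotonicity of $\Aa$ once more yields $\Aa U_n \stprec \Aa U_n^> \eqd U_{n+1}$. Chaining the two domination statements gives $W_{n+1} \stprec U_{n+1}$, completing the induction.

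The only mildly delicate point — and the one I would be most careful about — is the monotonicity of $\Aa$ under $\stprec$, since $\Aa$ is defined on laws rather than on random variables, so one must exhibit an explicit monotone coupling rather than invoke an abstract principle; the presence of the $(\cdot)^+$ and the random signs $Y_i$ means one should check that sharing the $Y_i$'s across the coupling does no harm, which it does not because $x \mapsto (x+y)^+$ is nondecreasing for each fixed $y$. Everything else is routine, and no new estimates are needed beyond \thref{lem:AW}.
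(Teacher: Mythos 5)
Your proof is correct and follows essentially the same route as the paper's: induction on $n$, the base case $W_0=0\stprec U_0=1$, the stochastic monotonicity of $\Aa$ via an explicit coupling, and the inequality $U_n\stprec U_n^>$. The only cosmetic difference is that the paper chains $W_n\stprec U_n\stprec U_n^>$ first and applies $\Aa$ once, whereas you apply $\Aa$ twice; the substance is identical.
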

\begin{proof}
  We prove this by induction, starting with $0=W_0\stprec U_0=1$.
  Now suppose that $W_n\stprec U_n$.
  Since $U_n\stprec U_n^>$, we have $W_n\stprec U_n^>$, and applying $\Aa$ to
  both sides of this inequality yields $W_{n+1}\stprec U_{n+1}$ by \commTJ{\thref{lem:AW,lem:A.monotone}}.
\end{proof}

The point of working with $(U_n)$ rather $(W_n)$ is that we will be able to show
that the random variables $(U_n)$ are log-concave. First we give a technical lemma in
this direction.

\begin{lemma}\thlabel{lem:log.concave.tech}
  Let $Y_1,\ldots,Y_d$ be i.i.d.\ random variables taking values $\pm 1$ with $\P(Y_i=1)=p$.
  For $d\ge 2$ and $p\geq 4/9$, the random variable
  \begin{align}\label{eq:log.concave.tech}
    \Bin\Biggl(\sum_{i=1}^d(Y_i+1),\  d^{-1} \Biggr)
  \end{align}
  is log-concave.
\end{lemma}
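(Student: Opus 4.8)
The plan is to reduce the statement to a couple of elementary inequalities by rewriting the random variable displayed in \eqref{eq:log.concave.tech} as an independent sum of i.i.d.\ pieces supported on $\{0,1,2\}$. Since $Y_i+1$ equals $2$ with probability $p$ and $0$ with probability $1-p$, we have $\sum_{i=1}^d(Y_i+1)=2M$ with $M=\#\{i:Y_i=1\}\sim\Bin(d,p)$, so the variable in question is $\Bin(2M,d^{-1})$. Grouping the $2M$ thinning trials according to which of the $d$ summands they belong to, we get $\Bin(2M,d^{-1})\eqd Z_1+\cdots+Z_d$, where $Z_1,\dots,Z_d$ are independent, each distributed as $\Bin(2B,d^{-1})$ with $B\sim\Ber(p)$. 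Thus each $Z_i$ takes values in $\{0,1,2\}$ with $\alpha:=\P(Z_i=0)=(1-p)+p(1-d^{-1})^2$, $\beta:=\P(Z_i=1)=2pd^{-1}(1-d^{-1})$, and $\gamma:=\P(Z_i=2)=pd^{-2}$. These are all strictly positive for $4/9\le p\le 1$ and $d\ge2$, so none of the distributions arising below has an internal zero, and only the log-concavity inequalities remain to be checked.

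For $d\ge3$ I would show that a single $Z_i$ is already log-concave, after which \thref{prop:lc.convolution} immediately gives that $Z_1+\cdots+Z_d$ is log-concave. The log-concavity of $Z_i$ is the inequality $\beta^2\ge\alpha\gamma$, which after dividing by $pd^{-2}$ becomes $3p(1-d^{-1})^2\ge 1-p$, equivalently $p\bigl(3(1-d^{-1})^2+1\bigr)\ge1$. The left-hand side is increasing in both $p$ and $d$, so it suffices to verify it at $p=4/9$, $d=3$, where it reads $\tfrac49\cdot\tfrac73=\tfrac{28}{27}\ge1$.

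For $d=2$ a single $Z_i$ need not be log-concave (this fails for $4/9\le p<4/7$), so I would instead compute the five-point law of $N:=Z_1+Z_2$ by hand. When $d=2$ one has $d^{-1}=1/2$ and hence $\beta=2\gamma$, which keeps the algebra manageable; writing $q_k=\P(N=k)$ gives $q_0=\alpha^2$, $q_1=2\alpha\beta$, $q_2=\beta^2+2\alpha\gamma$, $q_3=2\beta\gamma$, $q_4=\gamma^2$. Both $q_1^2\ge q_0q_2$ and $q_3^2\ge q_2q_4$ reduce to $3\beta^2\ge 2\alpha\gamma$, which using $\beta=2\gamma$, $\alpha=1-\tfrac{3p}{4}$, $\gamma=\tfrac{p}{4}$ becomes $6\gamma\ge\alpha$, i.e.\ exactly $p\ge\tfrac49$. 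The remaining inequality $q_2^2\ge q_1q_3$ is automatic from AM--GM, since $(\beta^2+2\alpha\gamma)^2\ge 4\beta^2\cdot2\alpha\gamma=8\alpha\beta^2\gamma\ge 4\alpha\beta^2\gamma=q_1q_3$.

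The only real obstacle is the need to treat $d=2$ separately: for $d\ge3$ the convolution lemma does all the work once one $Z_i$ is shown to be log-concave, but for $d=2$ this shortcut is unavailable and one must expand the two-fold convolution explicitly. This case is also exactly where the threshold is sharp, since $q_1^2\ge q_0q_2$ fails for $p<4/9$. A minor point to keep track of throughout is the ``no internal zeros'' half of log-concavity, which here is immediate because $\alpha,\beta,\gamma>0$ on the whole parameter range.
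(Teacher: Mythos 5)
Your proof is correct and follows essentially the same route as the paper's: decompose the variable into $d$ i.i.d.\ summands supported on $\{0,1,2\}$, verify single-summand log-concavity for $d\ge 3$ and invoke \thref{prop:lc.convolution}, and handle $d=2$ by an explicit five-point computation where the threshold $p\ge 4/9$ is sharp. The only cosmetic difference is that for $d\ge 3$ the paper first checks log-concavity of $\Bin(Y_i+1,1/3)$ and then thins down to $d^{-1}\le 1/3$ via \thref{prop:lc.thinning}, whereas you verify the inequality $\beta^2\ge\alpha\gamma$ directly for all $d\ge 3$ by monotonicity in $d$ and $p$ — both are equally valid.
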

\begin{proof}
  First, consider the case $d\geq 3$. We claim that for $p\geq 3/7$, the distribution
  $\Bin(Y_i+1,1/3)$ is log-concave. Indeed, let $X$ have this distribution.
  We only need to check that $\P(X=1)^2\geq\P(X=0)\P(X=2)$. A computation shows that
  \begin{align*}
    \P(X=1)^2-\P(X=0)\P(X=2) = \frac{(7p-3)p}{27},
  \end{align*}
  confirming the claim.
  For $x\leq 1/3$, the distribution $\Bin(Y_i+1,x)$ is a thinning of $\Bin(Y_i+1,1/3)$
  and hence is log-concave by \thref{prop:lc.thinning}. By viewing \eqref{eq:log.concave.tech}
  as
  \begin{align*}
    \sum_{i=1}^d\Bin\bigl(Y_i+1,\ d^{-1}\bigr),
  \end{align*}
  we see that it is a sum of independent log-concave random variables for any $d\geq 3$
  and by \thref{prop:lc.convolution} is log-concave.
  
  Now we turn to the case $d=2$. 
  Let $X$ be distributed as \eqref{eq:log.concave.tech}, and
  let $P_k=\P(X=k)$ for $k=0,\ldots,4$.
  We then compute directly
  \begin{align*}
    P_1^2-P_0P_2 &= \frac{(9p-4)(3p-4)^2p}{128},\\
    P_2^2-P_1P_3 &= \frac{\Bigl(13\bigl(p-\frac{12}{13}\bigr)^2+\frac{64}{13}\Bigr)p^2}{64},\\
    P_3^2-P_2P_4 &= \frac{(9p-4)p^2}{128},
  \end{align*}
  all of which are positive for $p\geq 4/9$.
\end{proof}

\begin{proposition}\thlabel{lem:log.concave}
  If $p\geq 4/9$, then $U_n$ is log-concave for all $n\geq 0$.
\end{proposition}
\begin{proof}
  We prove this by induction. 
  Trivially, $U_0$ is log-concave.
  Now, assume that $U_n$ is log-concave.
  This implies that $U_n^>$ is log-concave, since the sequence $(\P(U_n^>=k))_{k\geq 0}$
  is a rescaled version of the sequence $(\P(U_n=k))_{k\geq 0}$ with the $k=0$ term
  set to zero. 
  Let $X_1,\ldots,X_d$ be i.i.d.\ copies of $U_n^>$, and let $Y_1,\ldots,Y_d$
  be i.i.d.\ random variables taking values $\pm 1$ with $\P(Y_i=1)=p$.
  Looking back at the definition of $\Aa$ in \eqref{eq:Adef}, the distribution of $U_{n+1}$ is given by
  \begin{align}
    \Aa U_n^> &= \Bin\Biggl(\sum_{i=1}^d (X_i+Y_i)^+,\  d^{-1} \Biggr)=\Bin\Biggl(\sum_{i=1}^d (X_i+Y_i),\  d^{-1} \Biggr)\label{eq:AUn1}
  \end{align}
  \commTJ{with the replacement of $(X_i+Y_i)^+$ by $X_i+Y_i$ justified by the strict positivity of $X_i$.
  Now we take advantage of this positivity to break $X_i+Y_i$ into nonnegative
  terms $X_i-1$ and $Y_i+1$, giving us}
  \begin{align}
     \Aa U_n^>&= \Bin\Biggl(\sum_{i=1}^d (X_i-1)+\sum_{i=1}^d(Y_i+1),\  d^{-1} \Biggr)\\
             &= \Bin\Biggl(\sum_{i=1}^d (X_i-1),\ d^{-1}\Biggr)+
                \Bin\Biggl(\sum_{i=1}^d(Y_i+1),\  d^{-1} \Biggr).\label{eq:AUn2}
  \end{align}
  Since $X_i$ is log-concave, so is $X_i-1$. Thus the first term on the right-hand side 
  in \eqref{eq:AUn2}
  is a thinned sum of log-concave random variables and hence is log-concave by
  \thref{prop:lc.convolution,prop:lc.thinning}. 
  The second term is log-concave by \thref{lem:log.concave.tech}.
  A final application of \thref{prop:lc.convolution} shows that \eqref{eq:AUn2} is 
  a log-concave distribution, completing the induction.
\end{proof}

Now, we start seting up an induction to show that 
$U_n$ admits a size-bias coupling that is $(1,q)$-bounded for the lower tail for $q>0$.
\begin{lemma}\thlabel{lem:spine.induction}
  Let $X_1,\ldots,X_d$ be distributed as $U_n^>$,
  and let $Y_1,\ldots,Y_d$ be i.i.d.\ random variables taking values $\pm 1$ with $\P(Y_i=1)=p\geq 4/9$, 
  and let all be independent of each other.
  Let $U_n^s$ be independent of $X_2,\ldots,X_d$ as well.
  For all $n\geq 0$,
  \begin{align*}
    U_{n+1}^s \stprec 1 + \Bin\Bigl(U_n^s + (Y_1+1)^s - 1,\ d^{-1}\Bigr)
         + \Bin\Biggl(\sum_{i=2}^d(X_i+Y_i),\ d^{-1}\Biggr)
  \end{align*}
\end{lemma}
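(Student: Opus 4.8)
The plan is to compute the size-bias transform $U_{n+1}^s$ using the rules from Section~\ref{sec:size.bias} and then dominate it term by term. Recall from the definition of $(U_n)$ and of $\Aa$ that $U_{n+1}\eqd\sum_{i=1}^d T_i$, where $T_i=\Bin(X_i+Y_i,d^{-1})$, the $X_i$ are i.i.d.\ copies of $U_n^>$, the $Y_i$ are i.i.d.\ with $\P(Y_i=1)=p$, and $(X_i+Y_i)^+=X_i+Y_i$ since $X_i\ge 1$. First I would invoke \thref{lem:bias.sum}: the summands $T_i$ are exchangeable, so (by symmetry, biasing the first summand) $U_{n+1}^s\eqd T_1^s+\sum_{i=2}^d T_i$ with $T_1^s$ independent of $T_2,\dots,T_d$. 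By \thref{lem:bias.thinning}, $T_1^s\eqd 1+\Bin\bigl((X_1+Y_1)^s-1,\,d^{-1}\bigr)$, and since a sum of independent $d^{-1}$-thinnings is the $d^{-1}$-thinning of the sum, $\sum_{i=2}^d T_i\eqd\Bin\bigl(\sum_{i=2}^d(X_i+Y_i),\,d^{-1}\bigr)$. Matching these expressions against the right-hand side of the lemma, it remains to prove the single stochastic domination
\begin{align}\label{eq:spine.reduction}
  (X_1+Y_1)^s \stprec U_n^s+(Y_1+1)^s.
\end{align}
Indeed, \eqref{eq:spine.reduction} together with monotonicity of thinning under $\stprec$ yields $\Bin\bigl((X_1+Y_1)^s-1,d^{-1}\bigr)\stprec\Bin\bigl(U_n^s+(Y_1+1)^s-1,d^{-1}\bigr)$, and adding $1$ and the term $\sum_{i\ge 2}T_i$ (coupled equal to $\Bin(\sum_{i\ge 2}(X_i+Y_i),d^{-1})$) gives the claim; the independence of $U_n^s$ from $X_2,\dots,X_d$ asked for in the statement is automatic, since in the coupling $U_n^s$ is built only from $X_1,Y_1$ and auxiliary randomness.

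To establish \eqref{eq:spine.reduction}, the key move is to write $X_1+Y_1=(X_1-1)+(Y_1+1)$ as a sum of the two \emph{independent, nonnegative} variables $X_1-1$ and $Y_1+1$, and apply \thref{lem:bias.sum} once more: $(X_1+Y_1)^s$ is then the mixture of $(X_1-1)^s+(Y_1+1)$ and $(X_1-1)+(Y_1+1)^s$, with weights proportional to $\E(X_1-1)$ and $\E(Y_1+1)$. I would treat the two components separately. Since $Y_1+1$ takes values in $\{0,2\}$ with mean $2p$, its size-bias transform $(Y_1+1)^s$ is the constant $2$, and $Y_1+1\le 2$ always. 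Also $X_1\eqd U_n^>$ gives $X_1^s\eqd U_n^s$ by \thref{lem:bias.nonzero}, and $X_1-1\stprec X_1\stprec X_1^s$ (size-biasing always stochastically dominates). This disposes of the second component: $(X_1-1)+(Y_1+1)^s=(X_1-1)+2\stprec X_1^s+2$. For the first component I need $(X_1-1)^s\stprec X_1^s$, and here is the one place log-concavity enters: by \thref{lem:log.concave} (whose hypothesis $p\ge 4/9$ is in force), $U_n$ is log-concave, hence so are $U_n^>$ and $U_n^>-1$; since $X_1\eqd U_n^>$, \thref{lem:lr.logconcave} gives $X_1-1\lrprec X_1$, and \thref{prop:bias.order} upgrades this to $(X_1-1)^s\lrprec X_1^s$, hence $(X_1-1)^s\stprec X_1^s$. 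Combined with $Y_1+1\le 2$ this gives $(X_1-1)^s+(Y_1+1)\stprec X_1^s+2$. Both mixture components are therefore $\stprec X_1^s+2=U_n^s+(Y_1+1)^s$, so the mixture is as well, proving \eqref{eq:spine.reduction}.

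The main obstacle is precisely the term $(X_1+Y_1)^s$: because $Y_1$ is not nonnegative, \thref{lem:bias.sum} cannot be applied to $X_1+Y_1$ directly, and the shift $X_1+Y_1=(X_1-1)+(Y_1+1)$ that repairs this forces a comparison between $(X_1-1)^s$ and $X_1^s$. The likelihood-ratio order is the right tool for such comparisons, and the log-concavity of $U_n$ — which is the whole reason the argument works with $U_n$ rather than directly with $W_n$, and the source of the restriction $p\ge 4/9$ — is exactly what makes the comparison valid. The remaining ingredients (monotonicity of thinning, the bookkeeping of the independence structure, the elementary fact that $(Y_1+1)^s=2$) are routine.
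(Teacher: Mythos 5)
Your proof is correct and follows essentially the same route as the paper: the same application of \thref{lem:bias.sum} and \thref{lem:bias.thinning} to reduce to bounding $(X_1+Y_1)^s$, the same rewriting as $\bigl((X_1-1)+(Y_1+1)\bigr)^s$ and mixture decomposition, and the same use of log-concavity via \thref{lem:lr.logconcave} and \thref{prop:bias.order} to get $(X_1-1)^s\stprec U_n^s$. The only cosmetic difference is that you compute $(Y_1+1)^s=2$ explicitly where the paper invokes the general fact $Y_1+1\stprec(Y_1+1)^s$; both are valid.
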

\begin{proof}
  As in \eqref{eq:AUn1}, 
  \begin{align*}
    U_{n+1}\eqd \sum_{i=1}^d\Bin\bigl( X_i+Y_i,\;  d^{-1} \bigr).
  \end{align*}
  By \thref{lem:bias.sum}, we obtain the size-bias transform of $U_{n+1}$ by
  choosing a term in this sum at random and biasing it. Since all terms are identically
  distributed, we can just bias the first term. Applying \thref{lem:bias.thinning} to this term,
  we obtain
  \begin{align}
    U_{n+1}\eqd 1 + \Bin\bigl((X_1+Y_1)^s - 1,\; d^{-1}\bigr)
      + \sum_{i=2}^d \Bin\bigl( X_i+Y_i,\; d^{-1} \bigr).\label{eq:Un1.expr}
  \end{align}
  Now, we stochastically bound $(X_1+Y_1)^s$.
  We first rewrite it as $\bigl((X_1-1) + (Y_1+1)\bigr)^s$, noting that $X_1-1$ and $Y_1+1$
  are both nonnegative. By \thref{lem:bias.sum},
  \begin{align}
    (X_1+Y_1)^s \eqd
      \begin{cases}
        (X_1-1)^s + (Y_1+1) & \text{with probability $a$,}\\
        (X_1-1) + (Y_1+1)^s & \text{with probability $1-a$,}
      \end{cases}\label{eq:XY.mixture}
  \end{align}
  for some $a$ that we will avoid calculating.  
  
  Now, we show that both parts of this mixture are stochastically dominated
  by $U_n^s + (Y_1+1)^s$. First observe that $U_n$ is log-concave by \thref{lem:log.concave}, and therefore
  so is $X_1\eqd U_n^>$. Thus $X_1-1$ is log-concave, and $X_1-1\lrprec X_1$
  by \thref{lem:lr.logconcave}.
  Applying \thref{prop:bias.order} and the fact that dominance in the lr-order
  implies dominance in the usual order, we see that $(X_1-1)^s\stprec X_1^s\eqd U_n^{>s}$. Finally,
  $U_n^{>s}\eqd U_n^s$ by \thref{lem:bias.nonzero}, establishing that $(X_1-1)^s\preceq U_n^s$.
  Next, we have $Y_1+1\stprec (Y_1+1)^s$, a fact about size-biasing (it holds because
  $X\lrprec X^s$ in general by direct calculation).
  This completes the proof that the first part of the mixture is dominated by
  $U_n^s + (Y_1+1)^s$. To show this for the second part, we simply note that $X_1-1\preceq (X_1-1)^s\preceq U_n^s$.
  
  Since $U_n^s+(Y_1+1)^s$ stochastically dominates
  both parts of the mixture in \eqref{eq:XY.mixture}, it stochastically
  dominates $(X_1+Y_1)^s$ as well.
  Applying this to \eqref{eq:Un1.expr} completes the proof.
\end{proof}

Now we show the existence of our size-bias coupling for $U_n$. The idea
is to iterate the stochastic relation proven in \thref{lem:spine.induction}
to obtain a stochastic relation between $U_n$ and $U_n^s$ that implies the
existence of the coupling.
\begin{proposition}\thlabel{prop:sb.coupling}
  Let $p\geq 4/9$ and $q=\prod_{i=1}^{\infty} \bigl(1 - 2^{-i}\bigr)^2\approx .083$. For all $n\geq 0$,
  the random variable $U_n$ admits a size-bias coupling that is $(1,q)$-bounded
  for the lower tail.
\end{proposition}
\begin{proof}
  The bulk of the proof is to show that for all $n\geq 1$,
  \begin{align}\label{eq:stochastic.sb.coupling}
    U_n^s \stprec 1 + \sum_{i=1}^{n}\Bin\bigl(2, d^{-i}\bigr) + U_n,
  \end{align}
  which we prove by induction.
  Let $X_1,\ldots,X_d$ be distributed as $U_n^>$,
  and let $Y_1,\ldots,Y_d$ be i.i.d.\ random variables taking values $\pm 1$ with $\P(Y_i=1)=p$, 
  all independent of each other. For the base case, set $n=1$. Then by \thref{lem:spine.induction},
  \begin{align*}
    U_1^s &\stprec 1 + \Bin\Bigl(U_0^s + (Y_1+1)^s-1,\ d^{-1}\Bigr)
        + \Bin\Biggl(\sum_{i=2}^d(X_i+Y_i),\ d^{-1}\Biggr)\\
        &\stprec 1 + \Bin\bigl(2,\ d^{-1}\bigr)
        + \Bin\Biggl(\sum_{i=2}^d(X_i+Y_i),\ d^{-1}\Biggr)\\
        &\stprec 1 + \Bin\bigl(2,\ d^{-1}\bigr)
        + \Bin\Biggl(\sum_{i=1}^d(X_i+Y_i),\ d^{-1}\Biggr)\eqd 1 + \Bin\bigl(2,d^{-1}\bigr) + U_1.
  \end{align*}
  From the first to the second line, we have applied the inequality $U_0^s+(Y_1+1)^s-1\leq 2$, since $U_0^s=1$ and $(Y_1+1)^s\leq 2$. The last equality is due to the definition of $U_{1}$.

  Now, we assume that \eqref{eq:stochastic.sb.coupling} holds for $n$, and we show it holds for $n+1$.
  Apply \thref{lem:spine.induction} together with the inductive hypothesis to obtain
  \begin{align*}
    U_{n+1}^s &\stprec 1 + \Bin\Biggl(1 + \sum_{i=1}^n\Bin\bigl(2, d^{-i}\bigr) + U_n + (Y_1+1)^s - 1,\ d^{-1}\Biggr)  \\
    		& \qquad \qquad \qquad \qquad \qquad \qquad\qquad\quad  + \Bin\Biggl(\sum_{i=2}^d(X_i+Y_i),\ d^{-1}\Biggr).
  \end{align*}
  Again, we have $(Y_1+1)^s\leq 2$.
  We can also apply the bound $U_n\stprec U_n^>\eqd X_1$. This yields
    \begin{align*}
    U_{n+1}^s &\stprec 1 + \Bin\Biggl(\sum_{i=1}^{n}\Bin\bigl(2, d^{-i}\bigr) + X_1 + 2,\ d^{-1}\Biggr)
          + \Bin\Biggl(\sum_{i=2}^d(X_i+Y_i),\ d^{-1}\Biggr)\\
         &\eqd 1 + \sum_{i=1}^{n+1}\Bin\bigl(2,d^{-i}\bigr)
           + \Bin\bigl(X_1,d^{-1}\bigr)+ \Bin\Biggl(\sum_{i=2}^d(X_i+Y_i),\ d^{-1}\Biggr)\\
         &\stprec  1 + \sum_{i=1}^{n+1}\Bin\bigl(2,d^{-i}\bigr)+ U_{n+1},
  \end{align*}
  where the last domination is due to the definition of $U_{n+1}$ (see \eqref{eq:AUn1}). Advancing the induction completes the proof of \eqref{eq:stochastic.sb.coupling}.
  
  The stochastic inequality \eqref{eq:stochastic.sb.coupling}
  implies the existence of a coupling of $U_n$ and $U_n^s$
  under which the inequality
  holds almost surely. All that remains is to show that this coupling is $(1,q)$-bounded for the lower
  tail. For this, we observe that
  \begin{align*}
    \P\Biggl( \sum_{i=1}^{n}\Bin\bigl(2, d^{-i}\bigr) = 0 \Biggr) 
      \geq \P\Biggl( \sum_{i=1}^{\infty}\Bin\bigl(2, d^{-i}\bigr) = 0 \Biggr)
       = \prod_{i=1}^{\infty} \bigl(1-d^{-i}\bigr)^2\geq q.
  \end{align*}
  Hence, under this coupling,
  \begin{align*}
    \P\bigl(U_n^{s} \leq U_n + 1\mid U_n\bigr) &\geq q.
  \end{align*}
\end{proof}

Here in \thref{lem:spine.induction} and \thref{prop:sb.coupling}, we finally
see how size-biasing creates a spine in $\BT$ along which there are extra visits
to the root, as we described in Section~\ref{sec:proof.overview}.
The $1 + \sum_{i=1}^{n}\Bin\bigl(2, d^{-i}\bigr)$ term in
\eqref{eq:stochastic.sb.coupling} is an overestimate of these extra returns.
Heuristically, each $\Bin\bigl(2, d^{-i}\bigr)$ term represents two extra particles
at level~$i$ on the spine. Because a particle has a $d^{-1}$ chance of moving toward
the root at each step, the chance of a particle at level~$i$ reaching the root
is only $d^{-i}$, and we get only $O(1)$ expected extra visits to the root.

\begin{remark}
  The random variables $(U_n)$ were introduced because they could be proven to be log-concave.
  But computer investigations suggest that the random variables $(W_n)$ themselves
  are log-concave, at least in the $p=1/2$ case. If we could prove this, it would allow
  us to improve \thref{thm:tree} to a result on density. The proof would go as follows.
  First, $W_n$ would admit a size-bias coupling as in the previous
  proposition. Together with \thref{prop:tree.discrete.bound}\ref{i:tdb.critical}, this would
  show that $\P(W_n=0)=O(1/n)$, from which it would follow that $\P(V_t=0)=O(1/t)$.
  This implies that a $B$-particle at the root would survive for time~$t$ with probability $O(1/t)$,
  proving that the density of $B$-particles decays at rate $O(1/t)$. Since the density
  of $A$- and $B$-particles is the same, this would show that $\rho_t=O(1/t)$.
\end{remark}

The last ingredient is a technical lemma that bounds the growth of a sequence satisfying a certain recursive bound.

\begin{lemma}\thlabel{lem:recursion.log.bound}
	Suppose a sequence $\mu_n$ satisfies 
  \begin{align}\label{eq:rlb.hypo}
    \mu_{n+1} \leq \frac{\mu_n}{1 - e^{-q \mu_n}}
  \end{align}
  for some $q>0$. Then there exists $C>0$ such that $\mu_n \leq C \log n$ for all $n \geq 2$. 
\end{lemma}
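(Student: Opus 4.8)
The plan is to read the hypothesis as $\mu_{n+1}\le g(\mu_n)$, where $g(x)=x/(1-e^{-qx})$, and to view it as a discretization of the ODE $\mu'=g(\mu)-\mu=\mu/(e^{q\mu}-1)$. Since $\int (e^{q\mu}-1)\mu^{-1}\,d\mu$ has leading term $q^{-1}e^{q\mu}/\mu$, solutions of this ODE satisfy $e^{q\mu_n}/\mu_n\asymp n$, so $\mu_n\sim q^{-1}\log n$; thus a logarithmic bound is exactly what to expect, and I would prove it with the non-sharp but convenient leading constant $2/q$, which turns out to supply enough slack to run a clean induction for \emph{all} $n\ge 2$.

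The first step is to record two elementary properties of $g$ on $(0,\infty)$ (here we take $(\mu_n)$ to be a positive real sequence, as is needed for the recursion to make sense). First, $g$ is increasing: the numerator of $g'(x)$ equals $1-e^{-qx}(1+qx)$, which is positive because $e^{qx}>1+qx$. Second, $g(x)\le x+q^{-1}$, since $g(x)-x=x/(e^{qx}-1)\le x/(qx)=q^{-1}$, again from $e^{qx}\ge 1+qx$; in particular $\mu_n\le\mu_1+(n-1)/q<\infty$. Next I would prove by induction on $n\ge 2$ that $\mu_n\le f(n):=\frac{2}{q}\log n+K$, where $K:=\mu_1+q^{-1}$. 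The base case $n=2$ follows from the second property, since $\mu_2\le g(\mu_1)\le\mu_1+q^{-1}=K\le f(2)$. For the inductive step, since $g$ is increasing and $\mu_n\le f(n)$ we get $\mu_{n+1}\le g(\mu_n)\le g(f(n))$, so it suffices to check $g(f(n))\le f(n+1)$. Using $qf(n)=2\log n+qK$, hence $e^{qf(n)}=n^{2}e^{qK}$, we have
\[
  g(f(n))-f(n)=\frac{f(n)}{n^{2}e^{qK}-1},\qquad
  f(n+1)-f(n)=\frac{2}{q}\log\Bigl(1+\tfrac1n\Bigr)\ \ge\ \frac{2}{q(n+1)},
\]
the last bound from $\log(1+x)\ge x/(1+x)$. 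Hence it is enough to show $q(n+1)f(n)\le 2\bigl(n^{2}e^{qK}-1\bigr)$, i.e. $2(n+1)\log n+qK(n+1)\le 2n^{2}e^{qK}-2$. Since $e^{qK}\ge 1+qK$ and $n^{2}\ge n+1$ for $n\ge 2$, one has $2n^{2}e^{qK}\ge n^{2}+n^{2}(1+qK)\ge 2n^{2}+qK(n+1)$, which reduces the desired inequality to $2(n+1)\log n\le 2(n^{2}-1)=2(n+1)(n-1)$, i.e. $\log n\le n-1$, valid for all $n\ge 1$. This closes the induction, and since $\log n\ge\log 2$ for $n\ge 2$ we conclude $\mu_n\le\frac2q\log n+K\le C\log n$ with $C:=\frac2q+\frac{\mu_1+q^{-1}}{\log 2}$.

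I do not foresee any real obstacle: every inequality used is elementary. The one point needing care is the choice of leading constant in the induction hypothesis. Taking it to be $2/q$ makes $e^{qf(n)}=n^{2}e^{qK}$ grow quadratically in $n$, which is precisely the slack exploited in the last display; a constant close to the asymptotically sharp value $1/q$ would make $e^{qf(n)}$ only linear in $n$, forcing a separate (and fussier) treatment of small $n$ and a larger additive constant $K$.
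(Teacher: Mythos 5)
Your proof is correct and follows essentially the same route as the paper: an induction with leading constant $2/q$, using the monotonicity of $x\mapsto x/(1-e^{-qx})$ so that $e^{q\mu_n}$ grows at least quadratically in $n$, and reducing the inductive step to the elementary inequality $\log n\le n-1$. The only difference is bookkeeping — you carry an explicit additive constant $K$ and verify $g(f(n))\le f(n+1)$ directly, while the paper absorbs everything into a single multiplicative constant $C$ and checks $\frac{\log n}{1-n^{-2}}\le\log(n+1)$.
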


\begin{proof}
We proceed by induction. Choose the constant $C\geq 2/q$ large enough
  that $\mu_2\leq C\log 2$, establishing the base case of the induction.
  Now, we assume $\mu_n\leq C\log n$ and advance the induction.
  Since $x\mapsto x/(1-e^{-qx})$ is increasing, the inductive hypothesis together with
  \eqref{eq:rlb.hypo} and our assumption $C\geq 2/q$ yields
  \begin{align}\label{eq:ub.sufficient2}
    \mu_{n+1} \leq \frac{C\log n}{1-n^{-2}}.
  \end{align}
  Hence in order to show $\mu_{n+1}\le C\log (n+1)$, it suffices to show that
  \begin{align}\label{eq:ub.sufficient}
    \frac{\log n}{1-n^{-2}} \leq \log(n+1).
  \end{align}
  To prove this, start with the inequality $\log n \leq n-1$ for $n\geq 1$.
  Dividing both sides by $n^2-1$ yields
  \begin{align*}
    \frac{\log n}{n^2-1} \leq \frac{1}{n+1}.
  \end{align*}
  Rewriting the left-hand side,
  \begin{align*}
    \frac{\log n}{1-n^{-2}} - \log n \leq \frac{1}{n+1}.
  \end{align*}
 Then adding $\log n$ to both sides and applying the inequality $(\log n) + 1/(n+1)\leq \log(n+1)$ shows \eqref{eq:ub.sufficient}. This completes the induction and proves that $\mu_n\leq C\log n$ for all $n\geq 2$.
\end{proof}

\begin{proof}[Proof of upper bounds in Proposition \ref{prop:tree.discrete.bound}]
  We will bound $\E U_n$, which bounds $\E W_n$
  by \thref{lem:VU.dominance}. Let $\mu_n= \E U_n$.
  We start by applying
  \thref{prop:sb.coupling} and \thref{thm:concentration}\ref{i:concentration.lower} to deduce that
  \begin{align*}
    \P(U_n=0)\leq e^{-q\mu_n\commHL{/2}},
  \end{align*}
  where $q$ is the constant from \thref{prop:sb.coupling}.
  Thus,
  \begin{align}\label{eq:Un>}
    \E U_n^> = \frac{\mu_n}{1 - \P(U_n=0)}\leq \frac{\mu_n}{1-e^{-q\mu_n\commHL{/2}}}.
  \end{align}

  Now consider the critical case $p=1/2$.
  If $X\geq 1$ a.s., then \commHL{$\E[\Aa X]=\E X$} by \thref{lem:A.growth}. 
  Thus $\mu_{n+1}=\commHL{\E[\Aa U_n^{>}]}=\E U_n^>$, which with \eqref{eq:Un>} shows that
  \begin{align}\label{eq:mu.induction}
    \mu_{n+1}\leq \frac{\mu_n}{1-e^{-q\mu_n\commHL{/2}}}.
  \end{align}    
  It follows from \thref{lem:recursion.log.bound} that $\mu_n \leq C \log n$ for some $C>0$ 
  and all $n \geq 2$. 
  By \thref{lem:VU.dominance}, we have $\E W_n\leq \mu_n$.
  This completes the proof of the upper bound in part~\ref{i:tdb.critical}, the
  critical case.
  
  We now prove the upper bound on $\lim_{n\rightarrow \infty}\E W_n$ in the $p<1/2$ case.
  Since $\E W_n$ is an increasing sequence, we need only show that
  $\E W_n$ is bounded by $C\log(1/\epsilon)$ where $\epsilon=(1/2)-p$.
  By \thref{lem:A.growth} and \eqref{eq:Un>},
  \begin{align*}
    \mu_{n+1}=\commHL{\E[\Aa U_n^{>}]}=\E U_n^> -2\epsilon\leq \frac{\mu_n}{1-e^{-q\mu_n\commHL{/2}}}-2\epsilon.
  \end{align*} 
  Let $\varphi(x)= x/(1-e^{-qx\commHL{/2}})-2\epsilon$ for $x>0$,
  and set $\varphi(0)=\lim_{x\to 0}\varphi(x)=\commHL{\frac{1}{2q}}-2\epsilon$.
  Some calculus shows that
  the function $x\mapsto\varphi(x)-x$ is positive at $x=0$, is strictly decreasing on $[0,\infty)$,
  and approaches a limit of $-2\epsilon$ as $x\to\infty$. Hence,
  $\varphi$ has a unique fixed point $x_0$.
  Since $\varphi$ is strictly increasing, if $x<x_0$, then $\varphi(x)<\varphi(x_0)=x_0$.
  It follows from $q,\epsilon<1/2$ that $\varphi(1)-1>0$; hence $x_0>1$.
  Thus $1=\mu_0<x_0$, and hence $\mu_n<x_0$ for all $n$.
  By \thref{lem:VU.dominance}, we have $\E W_n\leq x_0$ for all $n$.
  All that remains is to
  estimate $x_0$.
  For $x=\frac{2}{q}\log(1/\epsilon)$,
  \begin{align*}
    \varphi(x)-x = \frac{2\epsilon^2}{q(1-\commHL{\epsilon})}\log(1/\epsilon)-2\epsilon,
  \end{align*}
  which is negative when $\epsilon$ is small. Hence $x_0<\frac{2}{q}\log(1/\epsilon)$
  for all sufficiently small $\epsilon$.
  \end{proof}

\subsection{Lower bounds on root visits}\label{tree.lower}
The lower bounds in \thref{prop:tree.discrete.bound} are much simpler than the upper bounds.
By \thref{lem:A.growth}, a lower bound on $\E W_n$ follows from anticoncentration
estimates (i.e., lower bounds on $\P(W_n=0)$). The idea of the proof is that even if we assume
that $W_n$ is as concentrated as possible---that its distribution is a point mass---then
the distribution of $W_{n+1}\eqd\Aa W_n$ is still nonconcentrated enough to yield the correct
lower bound.

\begin{lemma}\thlabel{lem:lower.nonconcentration}
  For all $n\geq 0$ and $p\leq 1/2$,
  \begin{align*}
    \P(W_{n+1}=0) \geq 4^{-\E W_n - 1}.
  \end{align*}
\end{lemma}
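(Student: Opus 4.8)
The plan is to work directly with the recursive distributional equality $W_{n+1}\eqd\Aa W_n$ supplied by \thref{lem:AW} and compute $\P(\Aa W_n=0)$. Writing $\Aa W_n = \sum_{i=1}^d \Bin\bigl((W_n^{(i)}+Y_i)^+,\,d^{-1}\bigr)$ as in \eqref{eq:Adef}, I would first condition on the $d$ independent copies $W_n^{(1)},\dots,W_n^{(d)}$ of $W_n$ together with the signs $Y_1,\dots,Y_d$. Given these, the $d$ thinned summands are independent, and the $i$th one vanishes with conditional probability $(1-d^{-1})^{(W_n^{(i)}+Y_i)^+}$. Taking expectations and using independence and identical distribution across $i$, this yields the clean identity
\begin{align*}
  \P(W_{n+1}=0) = \Bigl(\E\bigl[(1-d^{-1})^{(W_n+Y)^+}\bigr]\Bigr)^{d},
\end{align*}
where $Y$ is independent of $W_n$ and equals $1$ with probability $p$, $-1$ otherwise.

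Next I would bound the inner expectation from below. Since $Y\leq 1$, we have $(W_n+Y)^+\leq W_n+1$; as $x\mapsto(1-d^{-1})^x$ is decreasing, this gives $(1-d^{-1})^{(W_n+Y)^+}\geq(1-d^{-1})^{W_n+1}$ pointwise. The function $x\mapsto(1-d^{-1})^x$ is also convex, so Jensen's inequality gives $\E\bigl[(1-d^{-1})^{W_n}\bigr]\geq(1-d^{-1})^{\E W_n}$. Combining,
\begin{align*}
  \E\bigl[(1-d^{-1})^{(W_n+Y)^+}\bigr] \geq (1-d^{-1})\,\E\bigl[(1-d^{-1})^{W_n}\bigr] \geq (1-d^{-1})^{\E W_n + 1},
\end{align*}
and hence $\P(W_{n+1}=0)\geq (1-d^{-1})^{d(\E W_n+1)} = \bigl((1-d^{-1})^{d}\bigr)^{\E W_n+1}$.

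Finally I would invoke the elementary fact that $(1-1/d)^d$ is increasing in $d\geq 1$, so that for $d\geq 2$ it is at least its value $(1/2)^2=1/4$ at $d=2$. Since $\E W_n+1>0$, this gives $\P(W_{n+1}=0)\geq 4^{-\E W_n-1}$, which is the claim. There is no real obstacle here: the only points requiring a little care are the conditional-independence bookkeeping in the first step and getting the directions of the monotonicity and convexity inequalities right. (I note in passing that the hypothesis $p\leq 1/2$ is not actually used in this argument, only $Y\leq 1$; it is presumably stated to match the surrounding context.)
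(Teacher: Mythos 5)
Your proof is correct and follows essentially the same route as the paper's: both reduce $\P(W_{n+1}=0)$ to the expectation of $(1-1/d)^{(\cdot)}$ via the thinning in \eqref{eq:Adef} and then apply Jensen's inequality. The only substantive difference is in how the $Y_i$ term is handled: the paper first replaces $W_n$ by $\max(W_n,1)$ (so that $(X_i+Y_i)^+=X_i+Y_i$) and then uses Jensen on $Y_i$ together with $\E Y_i\leq 0$, which is where the hypothesis $p\leq 1/2$ enters; you instead use the deterministic bound $(W_n+Y)^+\leq W_n+1$, which sidesteps both the auxiliary coupling $\Aa X\stsucc\Aa W_n$ and the restriction on $p$. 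Your observation that $p\leq 1/2$ is not needed for the stated bound is accurate, though it is harmless for the paper since the lemma is only applied with $p\leq 1/2$.
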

\begin{proof}
  Define $X=\max(W_n,1)$.
  \commTJ{By \thref{lem:A.monotone}} we have $\Aa X\stsucc \Aa W_n$. By \thref{lem:AW},
  \begin{align*}
    \P( W_{n+1}=0)\geq \P(\Aa X = 0).
  \end{align*}
  Let $X_1,\ldots,X_d$ be i.i.d.\ copies of $X$, and let $Y_1,\ldots,Y_d$ be i.i.d.\ 
  with $\P(Y_i=1)=p$ and $\P(Y_i=-1)=1-p$.
  By the definition of $\Aa$ given in \eqref{eq:Adef} and the
  fact that $X_i\geq 1$,
  \begin{align*}
    \P(\Aa X = 0) = \E\Biggl[ \bigl(1 - 1/d\bigr)^{\sum_{i=1}^d(X_i+Y_i)}\Biggr]
      &= \prod_{i=1}^d \E\Biggl[ \bigl(1 - 1/d\bigr)^{X_i} \Biggr]\,
        \E\Biggl[ \bigl(1 - 1/d\bigr)^{Y_i} \Biggr]\\
        &\geq \bigl(1 - 1/d\bigr)^{d\E X}
        \geq 4^{-\E W_n-1}.
  \end{align*}
  From the first to the second line, we have applied Jensen's inequality to both expectations
  and then observed that $(1-1/d)^{\E Y_i}\geq 1$, since $\E Y_i\leq 0$ from our assumption $p\leq 1/2$.
  The last inequality holds because $\E X\leq \E W_n+1$,
  and $(1-1/d)^d\geq 1/4$ for $d\geq 2$.
\end{proof}

\begin{proof}[Proof of lower bounds in \thref{prop:tree.discrete.bound}]
  Let $\mu_n=\E W_n$.
  It follows from Lemmas~\ref{lem:lower.nonconcentration} and \ref{lem:A.growth} that
  for all $n\geq 0$ and $\epsilon=1/2-p\geq 0$,
  \begin{align}\label{eq:mu.DE}
    \mu_{n+2} \geq \mu_n -2\epsilon + \bigl(\epsilon+1/2\bigr)4^{-\mu_n-1}.
  \end{align}
  Since $x\mapsto x -2\epsilon + \bigl(\epsilon+1/2\bigr)4^{-x-1}$ is increasing for all $x\geq 0$,
  \begin{align}\label{eq:mutonicity}
    \mu_n\geq x \quad \implies\quad  \mu_{n+2}\geq x-2\epsilon+\bigl(\epsilon+1/2\bigr)4^{-x-1}.
  \end{align}
  
  Now, consider the $p=1/2$ case. Choose $c\leq 1/16$, observing that it then holds for all $n\geq 1$
  that
  \begin{align}\label{eq:2c/n}
    2c\leq n^{1-c\log 4}/8.
  \end{align}
  Also take $c$ small enough that  
  $\mu_2\geq c\log 2$.
  Using this statement and $\mu_1\geq 0$ as base cases, we prove
  \begin{align}\label{eq:mlb.hyp}
    \mu_n\geq c\log n
  \end{align}
  by carrying out one induction for odd $n$ and one for even $n$.
  Assuming \eqref{eq:mlb.hyp}, we apply
  \eqref{eq:mutonicity} to get
  \begin{align*}
    \mu_{n+2} \geq c\log n + n^{-c\log 4}/8\geq c(\log n + 2/n),
  \end{align*}
  applying \eqref{eq:2c/n} for the last inequality.
  Since $\log x$ is concave, we have $\log n+2/n\geq \log(n+2)$,
  advancing the induction. This completes the proof for the $p=1/2$ case.
  
  For $p<1/2$, we already know from the upper bounds of \thref{prop:tree.discrete.bound}
  that $\mu:=\lim_n\mu_n$ exists, since $\mu_n$ is increasing. Let
  \begin{align*}
    x_0 = -1+\frac{1}{\log 4}\log\Biggl(\frac{\epsilon+1/2}{2\epsilon}\Biggr),
  \end{align*}
  which is the solution for $x$ to the equation $x=x-2\epsilon+\bigl(\epsilon+1/2\bigr)4^{-x-1}$.
  We claim that $\mu\geq x_0$. Indeed, if not, then by taking $\mu_n$ sufficiently close to
  $\mu$, we could conclude from \eqref{eq:mutonicity} that $\mu_{n+2}>\mu$, which is a contradiction. 
\end{proof}


\begin{appendix}

\section{Random walk estimates}

We start with some standard facts about the maximum displacement of random walk.
The first is a moderate deviations estimate for a (possibly) biased continuous-time
random walk.

  \begin{lemma}\thlabel{lem:SRW}
    Let $S_t$ be a rate~$1$ continuous-time nearest-neighbor random walk on $\ZZ$ started at the origin with expected increment $\eps\in [0,1]$. Let $M_t = \sup_{0\leq s \leq t} S_t$. For all $0\leq x\leq 2t$,
    \begin{align*}
      \P(M_t\geq x+2\epsilon t) \leq e^{-x^2/4t}.
    \end{align*}
  \end{lemma}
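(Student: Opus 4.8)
The plan is to use a standard exponential (Chernoff / Doob) martingale argument combined with the reflection principle. First I would reduce the maximum to a one-time-slice estimate: by the reflection principle for the continuous-time nearest-neighbor walk (which applies since the walk changes by $\pm 1$ at jump times and is skip-free), one has $\P(M_t \ge a) \le 2\,\P(S_t \ge a)$, or more conveniently I would apply Doob's submartingale inequality directly to the exponential martingale below, which handles the supremum without a separate reflection step. Either way, the task becomes bounding the upper tail of $S_t$ itself.

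Next I would construct the exponential supermartingale. Write $S_t = \sum_{i=1}^{N_t}\xi_i$ where $N_t\sim\Poi(t)$ is the number of jumps and the $\xi_i$ are i.i.d.\ taking value $+1$ with probability $q=(1+\eps)/2$ and $-1$ with probability $1-q=(1-\eps)/2$. For $\theta\ge 0$, the process $\exp\bigl(\theta S_t - t\psi(\theta)\bigr)$ is a martingale, where $\psi(\theta) = \E e^{\theta\xi_1} - 1 = q e^\theta + (1-q)e^{-\theta} - 1 = \cosh\theta - 1 + \eps\sinh\theta$. By Doob's maximal inequality applied to this nonnegative martingale,
\begin{align*}
  \P\bigl(M_t \ge a\bigr) = \P\Bigl(\sup_{s\le t} e^{\theta S_s} \ge e^{\theta a}\Bigr)
    \le e^{-\theta a}\,\E e^{\theta S_t - t\psi(\theta)}\,e^{t\psi(\theta)}
    = \exp\bigl(-\theta a + t\psi(\theta)\bigr),
\end{align*}
valid for every $\theta \ge 0$. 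Setting $a = x + 2\eps t$, it remains to choose $\theta$ and bound $\psi(\theta)$.

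The elementary inequalities I would invoke are $\cosh\theta - 1 \le \theta^2/2 \cdot e^{\theta}$... more cleanly, I would use $\cosh\theta - 1 \le \tfrac{\theta^2}{2}\,\frac{\sinh\theta}{\theta}$ is awkward, so instead use the crude bounds $\cosh\theta-1\le \theta^2$ for $0\le\theta\le 1$ and $\sinh\theta \le \theta + \theta^2 \le 2\theta$ for $0\le\theta\le1$... Actually the cleanest route: take $\theta = x/(2t)$, which lies in $[0,1]$ since $x\le 2t$. Then $-\theta a + t\psi(\theta) = -\theta x - 2\eps t\theta + t(\cosh\theta-1) + \eps t\sinh\theta$. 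Using $\sinh\theta \le \theta + \theta^2 \le 2\theta$ (for $\theta\le 1$) gives $\eps t\sinh\theta - 2\eps t\theta \le 0$, killing the drift contribution. Using $\cosh\theta - 1 \le \theta^2$ for $\theta\in[0,1]$ gives $t(\cosh\theta-1) \le t\theta^2 = x^2/(4t)$. Hence $-\theta a + t\psi(\theta) \le -\theta x + x^2/(4t) = -x^2/(2t) + x^2/(4t) = -x^2/(4t)$, which is exactly the claimed bound. The main obstacle — really the only place care is needed — is pinning down the right elementary bounds on $\cosh$ and $\sinh$ so that the drift term is absorbed and the constant in the exponent comes out as $1/4t$ rather than something weaker; the choice $\theta = x/(2t)$ together with $x\le 2t$ is what makes this work, and I would double-check the inequality $\cosh\theta-1\le\theta^2$ on $[0,1]$ (it holds since $\cosh\theta - 1 = \sum_{k\ge1}\theta^{2k}/(2k)! \le \tfrac{\theta^2}{2}\sum_{k\ge0}\theta^{2k}/(2k)!\cdot\text{(ratio)} \le \theta^2$ for $\theta\le1$, as the series is dominated termwise).
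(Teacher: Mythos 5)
Your proposal is correct and follows essentially the same route as the paper: Doob's maximal inequality applied to the exponential (super)martingale, the compound-Poisson moment generating function $\exp(t(\cosh\theta+\eps\sinh\theta-1))$, and elementary bounds on $\cosh$ and $\sinh$ to absorb the drift and extract the exponent $x^2/4t$. The only cosmetic difference is your choice $\theta=x/(2t)$ with bounds valid on $[0,1]$, versus the paper's $\lambda=x/t$ with the bounds $\cosh\theta-1\le\tfrac34\theta^2$ and $\sinh\theta\le 2\theta$ on $[0,2]$; both give exactly the stated constant.
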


   \begin{proof}
   Applying Doob's martingale inequality to $S_t-\epsilon t$,
    \begin{align}\label{eq:doob}
      \P(M_t\geq x + 2\epsilon t) &\leq \P\Biggl(\sup_{0\leq s\leq t} (S_s-\epsilon s)\geq x+\epsilon t\Biggr)
        \leq \frac{\E\exp\bigl(\lambda (S_t-\epsilon t)\bigr)}{\exp\bigl(\lambda(x+\epsilon t)\bigr)}
    \end{align}
    for all $\lambda$.
   	The moment generating function of a single random step is
    $\cosh\lambda + \epsilon\sinh\lambda$.
    Let $N \eqd \Poi(t)$ be the number of steps taken by the random walk up to time~$t$. 
    Recalling that $\E[z^N] = \exp( t(z-1))$,
   	\begin{align}
   	  \E \exp(\lambda S_t) = \E (\cosh \lambda+\epsilon\sinh\lambda) ^N 
        = \exp\bigl(t (\cosh \lambda + \epsilon \sinh\lambda- 1)\bigr) \label{eq:mgf}.
   	\end{align}
   Applying \eqref{eq:mgf} to \eqref{eq:doob} with $\lambda = x/t$ gives
   	\begin{align}
   	\P\left(M_t\geq x+2\epsilon t\right) \leq \exp\Biggl( t \biggl(\cosh\frac{x}{t}+\epsilon\sinh\frac{x}{t} -1\biggr) - 2\epsilon x - \frac{x^2}{t}\Biggr).\label{eq:exponent}
   	\end{align}
    For $0\leq \theta\leq 2$, one can confirm using Taylor series that
    $\cosh\theta-1\leq \frac34 \theta^2$ and $\sinh\theta\leq 2\theta$.
    Applying this to  \eqref{eq:exponent} together with  our assumption $0\leq x\leq 2t$
    gives the claimed result.
\end{proof}

\begin{lemma} \thlabel{lem:poisson_tail}
	Let $S_t$ and $M_t$ be as in \thref{lem:SRW}. For any $k \geq 1$ it holds that 
	$$\P(M_t \geq t+k) \leq \exp\left( \f{ - k^2}{2(t+k)} \right).$$
\end{lemma}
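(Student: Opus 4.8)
The plan is to reduce the tail of $M_t$ to a Poisson tail and then apply an exponential Markov inequality. The key initial observation is that, \emph{irrespective of the bias $\epsilon$}, a nearest-neighbor walk moves by exactly $\pm 1$ at each jump, so the position at any time is bounded in absolute value by the number of jumps taken up to that time. Hence $M_t\leq N_t$ almost surely, where $N_t$ is the total number of jumps up to time~$t$, and $N_t\eqd\Poi(t)$ since the walk jumps at rate~$1$. Therefore
\begin{align*}
  \P(M_t\geq t+k)\leq \P\bigl(\Poi(t)\geq t+k\bigr),
\end{align*}
and it suffices to bound the right-hand side.

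Next I would run the standard Chernoff computation for the Poisson distribution. For any $\theta>0$, using $\E[z^{N_t}]=e^{t(z-1)}$ (the same fact invoked in the proof of \thref{lem:SRW}), we get $\P(\Poi(t)\geq t+k)\leq e^{-\theta(t+k)}\E e^{\theta N_t}=\exp\bigl(t(e^\theta-1)-\theta(t+k)\bigr)$. The optimal choice is $\theta=\log(1+k/t)$, which makes $t(e^\theta-1)=k$, yielding
\begin{align*}
  \P\bigl(\Poi(t)\geq t+k\bigr)\leq \exp\Bigl(k-(t+k)\log\bigl(1+\tfrac{k}{t}\bigr)\Bigr)=\exp\bigl(-t\,h(k/t)\bigr),
\end{align*}
where $h(u)=(1+u)\log(1+u)-u$.

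The only remaining task is the elementary inequality $h(u)\geq \dfrac{u^2}{2(1+u)}$ for all $u\geq 0$, since substituting $u=k/t$ gives $t\,h(k/t)\geq \dfrac{k^2}{2(t+k)}$, which is exactly the asserted bound. This is the sole place where any computation is needed, and it is routine: with $f(u)=h(u)-\frac{u^2}{2(1+u)}$ one checks $f(0)=0$, $f'(0)=0$, and $f''(u)=\frac{1}{1+u}-\frac{1}{(1+u)^3}=\frac{u(u+2)}{(1+u)^3}\geq 0$ on $[0,\infty)$, so $f\geq 0$ there; alternatively one may just cite the classical Bennett/Bernstein bound $h(u)\geq \frac{u^2}{2(1+u/3)}\geq \frac{u^2}{2(1+u)}$. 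There is no real obstacle in this proof — the substantive (if nearly trivial) point is the domination $M_t\leq N_t$, which is precisely what makes the estimate hold uniformly over the bias parameter $\epsilon$.
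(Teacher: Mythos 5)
Your proof is correct and follows exactly the paper's route: the paper's proof also bounds $M_t$ by the Poisson number of jumps $N_t$ and then invokes the standard Poisson tail bound $\P(\Poi(t)\geq t+k)\leq\exp(-t\,h(k/t))$ (citing Boucheron--Lugosi--Massart), which you have simply written out in full together with the elementary inequality $h(u)\geq u^2/(2(1+u))$. Nothing is missing; your version is just more self-contained.
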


\begin{proof}
The displacement of the random walk is bounded by the total number of steps it takes in time~$t$,
a Poisson random variable with mean $t$. The estimate is then a standard tail bound for the Poisson	(see \cite[Chapter 2]{boucheron2013concentration}).
\end{proof}

We also need a quick estimate on the local time at the origin of a random walk after time~$t$.
  
\begin{lemma} \thlabel{lem:local_time}
Let $L_t$ be the time spent at the origin after $t$ steps of a simple random walk $(S_s)_{s\leq t}$ started at the origin. It holds that $\E L_t \leq  \sqrt t$.	
\end{lemma}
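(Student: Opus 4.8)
The plan is to bound the expected local time $\E L_t$ of a continuous-time simple random walk at the origin by relating it to the return probabilities of the walk and then estimating those by a standard local central limit theorem argument. First I would write $L_t = \int_0^t \mathbf{1}\{S_s = 0\}\,ds$, so that by Fubini $\E L_t = \int_0^t \P(S_s = 0)\,ds$. Thus the problem reduces to the pointwise bound $\P(S_s = 0) \le C/\sqrt{s}$ for a universal constant $C$, after which $\E L_t \le \int_0^t C s^{-1/2}\,ds = 2C\sqrt t$; one then just has to check the constant comes out to be $\le 1$, or alternatively argue more carefully to get the clean constant $1$.

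For the pointwise estimate, I would condition on $N$, the number of jumps made by time $s$, which is $\Poi(s)$. Given $N = n$, the walk is at the origin iff a discrete simple random walk returns to $0$ in $n$ steps, which has probability $\binom{n}{n/2}2^{-n}$ if $n$ is even and $0$ if $n$ is odd. So $\P(S_s = 0) = \sum_{m \ge 0} e^{-s}\frac{s^{2m}}{(2m)!}\binom{2m}{m}2^{-2m}$. Using $\binom{2m}{m}2^{-2m} \le 1/\sqrt{\pi m}$ (valid for $m \ge 1$, with the $m=0$ term handled separately) reduces this to a Poisson moment computation; alternatively, and more cleanly, one recognizes $\sum_m \frac{(s/2)^{2m}}{(m!)^2} = I_0(s)$, the modified Bessel function, giving $\P(S_s=0) = e^{-s}I_0(s)$, and the classical bound $e^{-s}I_0(s) \le \frac{1}{\sqrt{2\pi s}}$ (for $s>0$) finishes it with room to spare, since $\int_0^t \frac{ds}{\sqrt{2\pi s}} = \sqrt{2t/\pi} < \sqrt t$.

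An alternative route that avoids Bessel functions: bound $\E L_t$ directly by a reflection/last-visit decomposition, or simply observe that the total local time at $0$ up to the $n$th jump is dominated in expectation by the discrete-time quantity, which is $\sum_{k=0}^{n}\P(\text{discrete SRW at }0\text{ after }k\text{ steps}) = \sum_k \binom{k}{\lfloor k/2\rfloor}2^{-k} \le C'\sqrt n$, and then average over $N = \Poi(t)$ using $\E\sqrt N \le \sqrt{\E N} = \sqrt t$ by Jensen. This is arguably the slickest argument: each visit to the origin in continuous time contributes an $\mathrm{Exp}(1)$ holding time with mean $1$, so $\E L_t \le \E[\#\{\text{visits to }0\text{ by jump time }N\}] = \E\bigl[\sum_{k=0}^{N}u_k\bigr]$ where $u_k$ is the discrete return probability, and $\sum_{k=0}^n u_k \le \sqrt n$ by a standard estimate, leaving $\E\sqrt N \le \sqrt t$.

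The main obstacle is purely getting the constant to be exactly $1$ rather than some larger absolute constant; the order $\sqrt t$ is immediate from any of these approaches. I would resolve this by using the sharp bound $\sum_{k=0}^{n}\binom{k}{\lfloor k/2\rfloor}2^{-k} \le \sqrt{n}$ (which can be verified by induction, using $\binom{k}{\lfloor k/2\rfloor}2^{-k} \le \sqrt{k+1}-\sqrt{k}$, itself a short calculation comparing with the central binomial coefficient asymptotics), combined with Jensen's inequality $\E\sqrt{N} \le \sqrt{\E N} = \sqrt t$ and the observation that the expected holding time per visit is exactly $1$; together these give $\E L_t \le \E\sqrt N \le \sqrt t$.
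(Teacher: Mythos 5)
Your overall strategy — write $\E L_t=\int_0^t\P(S_s=0)\,ds$ and control the return probability, or equivalently count visits and multiply by the mean holding time — is sound and immediately gives $\E L_t=O(\sqrt t)$, which is all the paper actually uses downstream. But the specific inequalities you invoke to nail the constant $1$ are false. First, the ``classical bound'' $e^{-s}I_0(s)\le\frac{1}{\sqrt{2\pi s}}$ fails for large $s$: the asymptotic expansion is $e^{-s}I_0(s)=\frac{1}{\sqrt{2\pi s}}\bigl(1+\frac{1}{8s}+\cdots\bigr)$, with a \emph{positive} first correction, so the inequality goes the wrong way. (The exact identity $\int_0^t e^{-s}I_0(s)\,ds=te^{-t}(I_0(t)+I_1(t))$ does satisfy $\le\sqrt t$, but that needs its own argument.) Second, in your ``slickest'' route the termwise claim $\binom{k}{\lfloor k/2\rfloor}2^{-k}\le\sqrt{k+1}-\sqrt{k}$ fails already at $k=2$ (left side $\tfrac12$, right side $\sqrt3-\sqrt2\approx0.318$), and consequently $\sum_{k=0}^{n}\binom{k}{\lfloor k/2\rfloor}2^{-k}\le\sqrt n$ fails at $n=2$ (sum $=\tfrac32>\sqrt2$) and trivially at $n=0$. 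So as written, neither route proves the stated bound with constant $1$; they prove $\E L_t\le C\sqrt t$ for some $C>1$.

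The paper gets the clean constant by a different and shorter argument: the discrete/continuous Tanaka formula gives $\E L_t=\E\lvert S_t\rvert$ exactly (when at the origin every jump increases $\lvert S\rvert$ by $1$ at rate $1$, and off the origin $\lvert S\rvert$ evolves as a martingale), and then $\E\lvert S_t\rvert\le\sqrt{\E S_t^2}=\sqrt{\E N}=\sqrt t$ by Cauchy--Schwarz, conditioning on the Poisson number of jumps. If you want to salvage your combinatorial route, the fix is to separate the initial visit: one can show $\sum_{k=1}^{n}\binom{k}{\lfloor k/2\rfloor}2^{-k}\le\sqrt n-1$ is still delicate, and you would end up with $1+\sqrt t$ rather than $\sqrt t$; the Tanaka identity is really the cheapest way to the sharp constant.
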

\begin{proof}
Applying Tanaka's formula for the local time \cite{kallenberg2006foundations} gives $\E L_t = \E|S_t|$. It is a standard and straightforward recursion to show that the expected value of the square of a simple random walk after $n$ steps is $n$. Hence, conditional on $S_t$ taking $N\overset d = \Poi(t)$ steps, we have
$$\E [|S_t| \mid N] \leq \sqrt{\E [ S_t^2 \mid N]} = \sqrt {N}.$$
It follows by taking expectation and applying Jensen's inequality that $$\E |S_t| \leq \E \sqrt N \leq \sqrt { \E N} = \sqrt t.$$ 
Hence $\E L_t \leq \sqrt t$. 
\end{proof}

Our final goal is to prove the following result, which combines the gambler's ruin
computation of the probability of a random walk hitting $b$ before $-a$ with the moderate
deviations bound given in \thref{lem:SRW}:

\begin{proposition}\thlabel{lem:gamblers.ruin}
  Let $T_x$ be the hitting time of $x$ for a continuous-time simple random walk on $\ZZ$
  started at the origin. For any $t>0$ and integers $a>0$ and $3\sqrt{t}\leq x\leq 2t$,
  \begin{align*}
    \P( \text{$T_x < T_{-a}$ and $T_x\leq t$} ) \leq \frac{2a}{a+x} e^{-x^2/12t}.
  \end{align*}
\end{proposition}

Essentially, this result is that the moderate deviations result 
\thref{lem:SRW} still holds (with a worse
constant in the exponent) after conditioning the random walk to hit $x$ before $-a$.
Heuristically, we can see that this should be true by considering the analogous situation
for Brownian motion. Conditioning the random walk to hit $x$ before $-a$ is like conditioning
Brownian motion to stay positive, which makes it a Bessel-3 process, the absolute value of 
a $3$-dimensional Brownian motion. From this explicit representation, it is easy to check that it satisfies
a moderate deviations tail bound.

To prove this result for random walks, we start by
considering two random walks, one unbiased and one with a bias in the positive direction.
We will need a lemma establishing the highly intuitive fact that the biased walk is faster to hit $b$
even after conditioning both walks to hit $b$ before $-a$, for any $a,b>0$.
\begin{lemma}\thlabel{lem:conditioned.dominance}
  Let $T_m$ be the first hitting time of $m$ for a continuous-time simple random walk
  started from the origin, and let $T'_m$ be the hitting time for a random walk
  that jumps to the right with probability $p>1/2$. For any positive integers $a$ and $b$,
  the conditional distribution of $T_b$ given $T_b<T_{-a}$ is stochastically larger
  than that of $T'_b$ given $T'_b<T'_{-a}$.
\end{lemma}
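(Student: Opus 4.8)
\textbf{Proof proposal for \thref{lem:conditioned.dominance}.}

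The plan is to build an explicit coupling of the two conditioned walks in which the biased walk's path is, segment by segment, a time-contraction of the unbiased walk's path. The natural way to do this is to first record the sequence of sites visited by each walk (its \emph{skeleton}), then separately put down the holding times. For a nearest-neighbor walk on $\ZZ$ conditioned to hit $b$ before $-a$, the skeleton is a Markov chain on $\{-a,\dots,b\}$ that is a Doob $h$-transform of the simple-random-walk skeleton, with $h(x)$ the harmonic function $(x+a)/(b+a)$ in the unbiased case and the corresponding geometric harmonic function in the biased case. First I would show that the conditioned biased skeleton stochastically makes ``at least as much rightward progress'' as the conditioned unbiased skeleton — more precisely, that the two conditioned skeletons can be coupled so that the biased one reaches $b$ in no more steps than the unbiased one. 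This should follow because, at every site $x\in\{-a+1,\dots,b-1\}$, the conditional probability of stepping right is larger for the biased $h$-transform than for the unbiased one (a short computation with the explicit harmonic functions), so the conditioned biased skeleton dominates the conditioned unbiased skeleton in the step-by-step sense, and hence hits $b$ after stochastically fewer steps. A clean way to phrase and prove the skeleton domination is to couple the two conditioned skeletons step by step using a common sequence of uniform random variables, maintaining the invariant that the biased walk's position is always $\geq$ the unbiased walk's position; monotonicity of the right-step probabilities in both $x$ and the bias keeps the invariant intact.

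Once the skeletons are coupled so that the number of steps $N'$ taken by the biased walk to reach $b$ is at most the number of steps $N$ taken by the unbiased walk, I would attach holding times. Conditioning on hitting $b$ before $-a$ does not affect the law of the holding times (they are i.i.d.\ $\mathrm{Exp}(1)$ independent of the skeleton in either walk, since the conditioning event is skeleton-measurable), so under the coupling $T_b = \sum_{i=1}^{N}\tau_i$ and $T'_b = \sum_{i=1}^{N'}\tau'_i$. Using the same i.i.d.\ $\mathrm{Exp}(1)$ holding times $\tau_i=\tau_i'$ for both walks and $N'\leq N$ almost surely gives $T'_b\leq T_b$ almost surely, which is exactly the claimed stochastic domination.

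The main obstacle I anticipate is verifying the skeleton-level monotonicity cleanly, i.e.\ that the right-step probability of the conditioned walk at site $x$ is increasing in the bias parameter $p$, uniformly over $x\in\{-a+1,\dots,b-1\}$. This is a finite computation with the explicit $h$-transform formulas — for the unbiased walk the conditioned right-step probability at $x$ is $\frac{h(x+1)}{2h(x)}$ with $h(x)=x+a$, and for the biased walk it is $p\,\frac{g(x+1)}{g(x)}$ with $g$ the geometric harmonic function $g(x)=1-\big(\tfrac{1-p}{p}\big)^{x+a}$ — and the inequality should reduce to a convexity/log-convexity statement that one checks directly. A secondary technical point is making sure the coupling of the two conditioned skeletons respects the boundary behavior at $b$ (once one walk hits $b$ it stops, and the invariant guarantees the biased one hits $b$ first), but this is handled automatically by the ``biased position $\geq$ unbiased position'' invariant together with the fact that both chains are absorbed at $b$.
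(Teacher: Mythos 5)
Your proposal is correct, but it takes a genuinely different route from the paper. The paper's proof is a likelihood-ratio computation on path space: for a nearest-neighbor path from $\root$ to $b$ of length $m$ that avoids $-a$, the numbers of right and left steps are $(m+b)/2$ and $(m-b)/2$, so the ratio of the biased to the unbiased (conditioned) path probability depends only on $m$ and equals $C'\bigl(2\sqrt{p(1-p)}\bigr)^m$, which is decreasing in $m$; hence the conditioned hitting times are ordered in the likelihood-ratio sense, which implies the stochastic ordering, and the holding times are attached exactly as you do. Your route instead passes through the Doob $h$-transforms of the two skeletons and a monotone step-by-step coupling. This works, but it hinges entirely on the pointwise inequality you defer: with $\theta=(1-p)/p$ and $y=x+a$, the conditioned right-step probability of the biased chain at $x$ is $\frac{1+\theta+\cdots+\theta^{y}}{(1+\theta)(1+\theta+\cdots+\theta^{y-1})}$, and you need this to be at least its $\theta\to 1$ limit $\frac{y+1}{2y}$, i.e.\ to be decreasing in $\theta$ on $(0,1)$ for every $y$. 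This is true (writing the quantity as $\bigl(1+\theta S_{y-2}/S_y\bigr)^{-1}$ with $S_k=1+\cdots+\theta^k$, the claim reduces to $(1+\theta^y)/S_y$ being decreasing in $\theta$, which one verifies by pairing coefficients), but it is a real computation that your sketch does not carry out, and it is exactly the work the paper's path-counting argument avoids. The parity observation that makes the monotone coupling close (the two positions always differ by an even number, so the invariant can only fail when they coincide) is correct and worth stating explicitly if you write this up. What your approach buys is an explicit almost-sure coupling of the two conditioned walks; what the paper's buys is brevity and the stronger likelihood-ratio ordering of the hitting times essentially for free.
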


\begin{proof}  
  Let $(S_n)$ be simple random walk conditioned to hit $b$ before
  $-a$, and let $(S'_n)$ be the biased walk under the same conditioning, both in discrete time.
  Take $U$ and $U'$ to be the hitting times of $b$ for these processes.
  It suffices to prove $U'\preceq U$,
  since we can couple the continuous-time walks to follow the paths of the discrete-time
  walks with identical jump times. 
  Let $\Ss_m$ denote the set of nearest-neighbor walks of length $m$ from $0$ to $b$ that
  never hit $-a$, and note that both
  $(S_n)_{n=0}^U$ and $(S'_n)_{n=0}^{U'}$ are supported on the set $\cup_m\Ss_m$.
  Observe that each walk in $\Ss_m$ takes
  $r=(m+b)/2$ jumps to the right and $\ell=(m-b)/2$ jumps to the left. 
  For any $(s_0,\ldots,s_m)\in\Ss_m$,
  \begin{align*}
    \frac{\P\Bigl( (S'_n)_{n=0}^{U'} = (s_n)_{n=0}^m \Bigr)}
           {\P\Bigl( (S_n)_{n=0}^{U} = (s_n)_{n=0}^m \Bigr)}
       &= \frac{p^r(1-p)^\ell/\P(T'_b<T'_{-a})}{2^{-m}/\P(T_b<T_{-a})}=h_0\bigl(2\sqrt{p(1-p)}\bigr)^m,
  \end{align*}
  where
  \begin{align*}
    h_0 &= \frac{p^{b/2}\P(T_b<T_{-a})}{(1-p)^{b/2}\P(T'_b<T'_{-a})}.
  \end{align*}
  Thus,
  \begin{align*}
    \frac{\P(U'=m)}{\P(U=m)} &= \frac{\sum_{(s_n)\in\Ss_m}\P\Bigl( (S'_n)_{n=0}^{U'} = (s_n)_{n=0}^m \Bigr)}{\sum_{(s_n)\in\Ss_m}\P\Bigl( (S_n)_{n=0}^{U} = (s_n)_{n=0}^m \Bigr)}
    =h_0\bigl(2\sqrt{p(1-p)}\bigr)^m.
  \end{align*}
  This is decreasing in $m$, which proves $U'\preceq U$.
\end{proof}

\begin{proof}[Proof of \thref{lem:gamblers.ruin}]
  The gambler's ruin calculation states that $\P(T_x<T_{-a})=a/(a+x)$.
  Thus our goal is to prove that
  \begin{align}\label{eq:gamblers.reduction}
    \P(T_x\leq t \mid T_x<T_{-a}) \leq 2e^{-x^2/12t}.
  \end{align}
  To show this, fix $t$, and let $(S_s)$ be a continuous-time random walk starting
  from 
  \commHL{0}
  with probability
  $(1+t^{-1/2})/2$ of jumping to the right.
  By \thref{lem:conditioned.dominance}, it suffices to bound its probability
  of hitting $x$ in time $t$ given that it hits $x$ before $-a$.
  Since $(S_s)$ must pass through $\ceil{x/2}$ on its way to $x$ and is
  a Markov process even after conditioning, it suffices to prove the bound under
  the assumption that $(S_s)$ starts at $\ceil{x/2}$ rather than the origin.
  Thus, our goal now is to show that
  \begin{align}
    \P(U_x\leq t \mid U_x<U_{-a}) \leq 2e^{-x^2/12t},
  \end{align}
  where $U_y$ is the first hitting time of $y$ for the biased random walk starting
  at $\ceil{x/2}$. By the gambler's ruin calculation for biased random walks,
  \begin{align*}
    \P(U_x<U_{-a}) = \frac{1-\alpha^{\ceil{x/2}+a}}{1-\alpha^{x+a}},
  \end{align*}
  where $\alpha=(1-t^{-1/2})/(1+t^{-1/2})$. Using $\alpha\leq 1-t^{-1/2}$
  and our assumption $x\geq3\sqrt{t}$,
  \begin{align*}
    \P(U_x<U_{-a}) \geq 1-\alpha^{x/2+a}\geq 1 - (1-t^{-1/2})^{3\sqrt{t}/2+a}\geq 1-e^{-3/2}.
  \end{align*}
  Hence, by \thref{lem:SRW},
  \begin{align*}
    \P(U_x\leq t \mid U_x<U_{-a}) &\leq \frac{\P(U_x\leq t)}{\P(U_x<U_{-a})}
      \leq\frac{ \exp\Bigl( -\frac{x-2\sqrt{t}}{4t} \Bigr)}{1-e^{-3/2}}
      \leq 2e^{ -x/12t},
  \end{align*}
  using the bound $x-2\sqrt{t}\geq x/3$ that follows from our assumption $x\geq 3\sqrt{t}$.
  This completes the proof by establishing \eqref{eq:gamblers.reduction}.
\end{proof}

\end{appendix}

\begin{acks}[Acknowledgments]
 We thank Michael Damron for helpful feedback and his assistance with Section \ref{sec:EV_LB}.
\end{acks}

\begin{funding}
Johnson was partially supported by the NSF-DMS Grant \#1811952, Junge by the NSF-DMS Grant \#185551, and Lyu by DMS-2206296 and DMS-2010035. Lyu and Sivakoff were partially supported by the NSF grant CCF--1740761.
\end{funding}



\bibliographystyle{imsart-nameyear.bst} 

\bibliography{DLAS}

\end{document}